
\documentclass[a4paper, 11pt, twoside, leqno, openright]{amsart}

\usepackage[top=1.5in, bottom=1.2in, left=1.2in, right=1.2in]{geometry}

\usepackage{amssymb}
\usepackage{amsthm}
\usepackage{amscd}
\usepackage{mathrsfs}
\usepackage{amsopn}
\usepackage{manfnt}

\usepackage{latexsym}
\usepackage{amsmath}
\usepackage{epic,eepic}
\usepackage{pifont}
\usepackage{lmodern}

\usepackage[pdftex]{graphicx}
\usepackage{tikz}
\usepackage{tikz-cd}
\usepackage{pgfplots}

\usetikzlibrary{intersections}
\usetikzlibrary{positioning}
\usetikzlibrary{arrows}
\usetikzlibrary{patterns}
\usetikzlibrary{decorations.pathreplacing}

\usepackage{url}
\usepackage[all]{xy}
\usepackage{paralist}
\usepackage{hyperref}

\newcommand{\longhookrightarrow}
{\DOTSB\lhook\joinrel\longrightarrow}


\def\bN{{\mathbb N}}
\def\bZ{{\mathbb Z}}
\def\bQ{{\mathbb Q}}

\def\bC{{\mathbb C}}

\def\an{{\mathrm{an}}}

\def\top{{\mathrm{top}}}

\def\exp{{\mathrm{exp}}}

\def\Aut{{\mbox{Aut}}}

\def\Gal{{\mbox{Gal}}}

\def\Spec{{\mbox{Spec~}}}

\def\p{{\mathfrak{p}}}


\newtheorem{dfn}{Definition}[section]

\newtheorem{thm}[dfn]{Theorem}

\newtheorem{prop}[dfn]{Proposition}

\newtheorem{cor}[dfn]{Corollary}
\newtheorem{rem}[dfn]{Remark}

\theoremstyle{definition}
\newtheorem{theorem}{Theorem}
\newtheorem*{theorem*}{Theorem}
\newtheorem{definition}[theorem]{Definition}
\newtheorem*{definition*}{Definition}
\newtheorem{remark}[theorem]{Remark}
\newtheorem*{Remark*}{Remark}

\usepackage{fancyhdr} 
\pagestyle{fancy} 
\lhead[\scriptsize{\thepage}]{} 
\chead[\scriptsize{DENSUKE SHIRAISHI}]
{\footnotesize{ON GALOIS 
POLYLOGARITHMS AND TRIPLE SYMBOLS}} 
\rhead[]{\scriptsize{\thepage}} 
\lfoot[]{} 
\cfoot[]{} 
\rfoot[]{} 


\makeatletter

\@addtoreset{equation}{subsection}
\makeatother

\begin{document}

\title{\vspace{-2mm}On $\ell$-adic 
Galois polylogarithms
\\~and triple $\ell$-th power residue symbols}
\author[D.~Shiraishi]{Densuke Shiraishi}
\address{Department of Mathematics,
Graduate School of Science,
Osaka University, Toyonaka, Osaka 560-0043, Japan}
\email{densuke.shiraishi@gmail.com,
u848765h@ecs.osaka-u.ac.jp}
\date{}
\subjclass[2010]{14H30; 11G55, 11R32, 11R99}
\keywords{fundamental group, polylogarithm,
triple power residue symbol}

\maketitle

\vspace{-6mm}

\begin{abstract}
The $\ell$-adic Galois polylogarithm 
is an arithmetic function 
on an absolute Galois group with values 
in $\ell$-adic numbers,
which arises from Galois actions 
on $\ell$-adic $\rm{\acute{e}}$tale paths 
on ${\mathbb P}^1 \backslash \{0,1,\infty\}$.
In the present paper,
we discuss a relationship between 
$\ell$-adic Galois polylogarithms 
and triple $\ell$-th power residue symbols 
in some special cases
studied by a work of Hirano-Morishita [HM].
We show that a functional equation 
of $\ell$-adic Galois polylogarithms 
by Nakamura-Wojtkowiak [NW2] implies 
a reciprocity law of triple $\ell$-th 
power residue symbols.
\end{abstract}

\section*{Introduction}

Let $K$ be a number field,
$\overline{K}$ its algebraic closure in the complex number field $\bC$.
For a prime number $\ell$,
let $\zeta_{\ell}
:=\exp(\frac{2\pi\sqrt{-1}}{\ell})$
a primitive $\ell$-th root of unity in $\overline{K}$. 

Choose a $K$-rational point $z$ 
of ${\mathbb P}_{{K}}^1\backslash 
\{0,1,\infty\}$.
For any prime number $\ell$,
the absolute Galois group 
$G_K:=\Gal(\overline{K}/K)$ 
acts on the $\ell$-adic 
$\rm{\acute{e}}$tale path space
$\pi^{\ell}_1
({\mathbb P}_{\overline{K}}^1\backslash 
\{0,1,\infty\}; \overrightarrow{01},\bar{z})$
where $\overrightarrow{01}$ is 
the standard $K$-rational tangential base point.
In [Wo],
for a fixed $\ell$-adic $\rm{\acute{e}}$tale path $\gamma 
\in \pi^{\ell}_1
({\mathbb P}_{\overline{K}}^1\backslash 
\{0,1,\infty\}; \overrightarrow{01},\bar{z})$,
Z. Wojtkowiak introduced 
an arithmetic function
\[\ell i_{n}^{(\ell)}(z,\gamma):
G_K \to \bQ_{\ell}\]
(for $n=2,3,4,\dots$)
valued in the $\ell$-adic number field $\bQ_\ell$,
called the $n$-th 
{\it $\ell$-adic Galois polylogarithm},
defined as a certain coefficient
in the $\ell$-adic Magnus expansion
of the loop
\[{\mathfrak f}^{(\ell)}_{\gamma}(\sigma):=
\gamma \cdot \sigma(\gamma)^{-1}
\in \pi^{\ell}_1
({\mathbb P}_{\overline{K}}^1\backslash 
\{0,1,\infty\}, \overrightarrow{01})
~\qquad~(\sigma \in G_K).\]

On the other hand,
following the analogy
between knots and primes,
M. Morishita introduced
the mod $\ell$ Milnor invariant 
$\mu_\ell(123) \in \bZ/\ell \bZ$ 
for certain prime ideals $\p_1,\p_2,\p_3$ 
of $\bQ(\zeta_\ell)$ for $\ell=2,
3$,
as an arithmetic analog 
of the Milnor invariant of links
([Mo], [AMM]).
As a result,
the 
{\it triple $\ell$-th power residue symbol}
is defined by
\[[\p_1,\p_2,\p_3]_{\ell}
:=\zeta_\ell^{\mu_\ell(123)},\]
which controls
the decomposition law of $\p_3$
in a certain nilpotent extension
$R_{\p_1,\p_2}^{(\ell)}/\bQ(\zeta_{\ell})$.

In the present paper,
we relate $[\p_1,\p_2,\p_3]_{\ell}$ to 
$\ell i_{n}^{(\ell)}(z,\gamma)$ for $\ell=2,
3$ as follows:\\
{\bf Main formula}~{\rm (Naive form)}{\bf .}
~\hspace{1mm}~{\rm For} 
$\ell \in \{2,
3\}$,
we have
\begin{equation}\label{formula}
[\p_1,\p_2,\p_3]_{\ell} 
= \pm \zeta_{\ell}
^{-\ell i_{2}^{(\ell)}(z,\gamma)(\sigma)},
\end{equation}
\noindent
where $K,
z,
\gamma,
\sigma$ are suitably chosen 
to satisfy certain 
conditions depending 
on the triple of primes $\{\p_1,\p_2,\p_3\}$.
(See Theorem~\ref{tl} for more details.)

Moreover,
as a consequence of (\ref{formula}),
we derive
a reciprocity law 
of the triple symbol $[\p_1,\p_2,\p_3]_{\ell}$
due to R$\rm{\acute{e}}$dei 
[{R${\rm \acute{e}}$}],
Amano-Mizusawa-Morishita [AMM]
in the form
\begin{equation}\label{functional}
[\p_1,\p_2,\p_3]_{\ell} \cdot 
[\p_2,\p_1,\p_3]_{\ell} 
=1~\qquad~(\ell = 2,
3)
\end{equation}
from a functional equation between
$\ell i_{2}^{(\ell)}(z,\gamma)$
and
$\ell i_{2}^{(\ell)}(1-z,\gamma')$
due to Nakamura-Wojtkowiak [NW2].
(See Corollary~\ref{reci} for details.)
Thus,
by using a functional equation 
of $\ell$-adic Galois polylogarithms,
we have another proof of 
a reciprocity law of triple $\ell$-th 
power residue symbols.
This fact is an indication that 
the Galois action mentioned at 
the beginning of this introduction 
has abundant arithmetic information.

\section*{Acknowledgements}

This work is inspired by 
the work of Hirano-Morishita [HM]
and is based on my Master's thesis [Sh].
I would like to express 
my deepest gratitude 
to my supervisor,
Professor Hiroaki Nakamura for
his helpful advice
and warm encouragement.
I am deeply grateful to
Professor Masanori Morishita 
for inviting me to the Workshop
``Low dimensional topology and number theory XI''
and for providing useful comments on 
improving the expression of main theorems 
in the present paper during the workshop.
I also wish to express my thanks to Professor 
Yasushi Mizusawa who told me how to 
compute the triple symbol $[\p_1,\p_2,\p_3]_{3}$ 
by using  PARI/GP.


\section{Preliminaries}


\subsection{\boldmath $\ell$-adic 
Galois polylogarithms}\label{lgp}

In this section,
we recall the definition 
and some properties 
of $\ell$-adic Galois polylogarithms.
Fix any prime number $\ell$.
Let $K$ be a sub-field of $\bC$,
$\overline{K}$ 
a fixed algebraic closure of $K$,
and $G_K:=\Gal( \overline{K}/K)$ 
the absolute Galois group of 
$K$ with respect to $\overline{K}$.
Fix an embedding $\overline{K} \hookrightarrow \bC$.
For any positive integer $m$,
let $\zeta_{\ell^m}
:=\exp(\frac{2\pi\sqrt{-1}}{\ell^m})$
a fixed primitive $\ell^m$-th root of unity in $\overline{K}$.
Let
\[X:={\mathbb P}_K^1
\backslash \{0,1,\infty\}\]
be a projective line minus 3 points 
over $K$,
$X_{\overline{K}}
:=X\times_K \overline{K}$ 
the base change 
of $X \to \Spec K$ 
via $K \hookrightarrow \overline{K}$,
and
$X^{\an}={\mathbb P}^1
(\bC)\backslash \{0,1,\infty\}$ 
the complex analytic space
associated to the base change
of $X_{\overline{K}}$ via $\overline{K} \hookrightarrow \bC$.

First,
we choose
a $K$-rational point \[z \in X(K).\]
As appropriate,
we regard $z$ as a point on $X^{\an}$ by the fixed embeddings
$K \hookrightarrow \overline{K}$ and $\overline{K} \hookrightarrow \bC$.
We denote by 
$\pi_1
^{\top}(X^{\an}; \overrightarrow{01}, z)$
the set of homotopy classes
of 
piece-wise smooth topological paths on $X^{\an}$
from 
the unit tangent vector $\overrightarrow{01}$
to $z$;
by $\pi_1^\top(X^{\an}, \overrightarrow{01})$
the topological fundamental group 
of $X^{\an}$ with the base point $\overrightarrow{01}$.
The group
$\pi_1^\top(X^{\an}, \overrightarrow{01})$
is a free group of rank $2$
generated by
the homotopy classes of $\{l_0,
l_1\}$ in the following Figure:
\[\pi_1
^{\rm top}(X^{\rm an}, \overrightarrow{01})={\left<l_0,
l_1\right>}.\]
Fix a homotopy class
\[\gamma \in \pi_1
^{\top}(X^{\an}; \overrightarrow{01}, z).\]

Let $\bar{z}: \Spec \overline{K} 
\to X_{\overline{K}}$ 
be the base change 
of $z: \Spec K \to X$ 
via $K \hookrightarrow \overline{K}$.
We denote
by $\pi_1^{\ell}
(X_{\overline{K}}; \overrightarrow{01}, \bar{z})$
the profinite set of $\ell$-adic $\rm{\acute{e}}$tale paths
on $X_{\overline{K}}$
from the standard $K$-rational 
tangential base point $\overrightarrow{01}$
to $\bar{z}$;
by $\pi_1
^\ell
(X_{\overline{K}}, \ast)$ the pro-$\ell$ $\rm{\acute{e}}$tale
fundamental group of $X_{\overline{K}}$ with 
base point $\ast \in \{\overrightarrow{01}, \bar{z}\}$.
By using the comparison maps induced 
by  the fixed embedding 
$\overline{K} \hookrightarrow \bC$,
we regard homotopy classes $l_0,
l_1 \in \pi_1
^{\top}(X^{\an}, \overrightarrow{01})$,
$\gamma \in \pi_1
^{\top}(X^{\an}; \overrightarrow{01}, z)$ 
as $\ell$-adic 
$\rm{\acute{e}}$tale paths
\[l_0,
l_1 \in \pi_1^\ell
(X_{\overline{K}}, \overrightarrow{01}),
\gamma \in \pi_1^{\ell}
(X_{\overline{K}}; \overrightarrow{01}, \bar{z}).\]
Then
$\pi_1^\ell
(X_{\overline{K}}, \overrightarrow{01})$
is a free pro-$\ell$ group 
topologically generated 
by $\{l_0,
l_1\}$:
\[\pi_1^{\ell}(X_{\overline{K}}, \overrightarrow{01})
=\overline{\left<l_0,
l_1\right>}.\]

\vspace{0.5cm}
\begin{center}
\begin{tikzpicture}
\draw (2,0) -- (6,0);
\draw (4.6,0) -- (4.5,0.1);
\draw (4.6,0) -- (4.5,-0.1);
\draw (0.7,0) -- (0.6,0.1);
\draw (0.7,0) -- (0.8,0.1);
\draw (7.3,0) -- (7.4,-0.1);
\draw (7.3,0) -- (7.2,-0.1);
\draw (3.67,1.1) -- (3.48,1.1);
\draw (3.67,1.1) -- (3.67,0.9);
\draw (2.1,0) to [out=0,in=270] (2.8,0.7);
\draw (2.8,0.7) to [out=90,in=360] (2,1.4);
\draw (2,1.4) to [out=180,in=90] (0.7,0);
\draw (0.7,0) to [out=270,in=180] (2,-1.4);
\draw (2,-1.4) to [out=0,in=270] (2.8,-0.7);
\draw (2.8,-0.7) to [out=90,in=360] (2.1,0);
\draw (2,0) to [out=0,in=210] (4,0.4);
\draw (4,0.4) to [out=30,in=180] (6,1.4);
\draw (6,1.4) to [out=0,in=90] (7.3,0);
\draw (7.3,0) to [out=270,in=0] (6,-1.4);
\draw (4,-0.4) to [out=330,in=180] (6,-1.4);
\draw (4,-0.4) to [out=150,in=0] (2,0);
\draw (2.1,0) to [out=0,in=180] (5,1.9);
\node at (0.3,0) {$l_0$};
\node at (5,1.9) {${\bullet}$};
\node at (5.4,1.9) {$z$};
\node at (3.6,1.6) {$\gamma$};
\node at (1.8,-0.4) {$0$};
\node at (4.7,-0.35) {$\delta$};
\node at (6.2,-0.4) {$1$};
\node at (7.7,0) {$l_1$};
\node at (2,0) {${\bullet}$};
\node at (6,0) {${\bullet}$};
\node at (6,0) {${\bullet}$};
\node at (4,-2.3) {\bf{Figure of} $\boldsymbol{X^\an}$};
\end{tikzpicture}
\end{center}
\vspace{0.5cm}

Next,
we focus on the Galois action
\[G_K \to \Aut(\pi_1^{\ell}
(X_{\overline{K}}; \overrightarrow{01}, \bar{z}))\]
defined by
$\sigma(p):=s_{\overrightarrow{01}}(\sigma) \cdot p \cdot s_{\bar{z}}(\sigma)^{-1}$
for $\sigma \in G_K$ and $p \in \pi_1^{\ell}
(X_{\overline{K}}; \overrightarrow{01}, \bar{z})$,
where $s_{\ast}: G_K \to \pi_1^{\ell}
(X_{\overline{K}}, \ast)$ is a canonical homomorphism 
induced by a geometric point $\ast \in \{\overrightarrow{01}, \bar{z}\}$ 
on $X_{\overline{K}}$
and paths are composed from left to right.
Consider 
the continuous $1$-cocycle
\[{\mathfrak f}^{(\ell)}_{\gamma}: 
G_K \to \pi_1
^\ell
(X_{\overline{K}}, \overrightarrow{01})\]
defined by ${\mathfrak f}
^{(\ell)}_{\gamma}(\sigma):=
\gamma \cdot \sigma(\gamma)^{-1} ~\in 
\pi_1
^\ell(X_{\overline{K}}, \overrightarrow{01})$.
To understand clearly the behavior 
of ${\mathfrak f}^{(\ell)}_{\gamma}$,
we use 
the $\ell$-adic Magnus embedding
\[E:\pi_1^{\ell}
(X_{\overline{K}}, \overrightarrow{01}) 
\longhookrightarrow 
\mathbb{Q}_{\ell} 
\langle \langle e_0,e_1 \rangle \rangle \]
defined by $E(l_0)={\rm exp}(e_0),
E(l_1)={\rm exp}(e_1)$
where 
$\mathbb{Q}_{\ell} 
\langle \langle e_0,e_1 \rangle \rangle$
is the $\mathbb{Q}_{\ell}$-algebra 
of formal power series 
over $\bQ_{\ell}$ in 
two non-commuting variables $e_0$
and $e_1$.
The constant term of $E({\mathfrak f}
^{(\ell)}_{\gamma}(\sigma)) 
\in \mathbb{Q}_{\ell} 
\langle \langle e_0,e_1 \rangle \rangle$
is equal to $1$ for any $\sigma \in G_K$,
so we can consider 
the Lie formal power series ${\rm log}
(E({\mathfrak f}_{\gamma}
^{(\ell)}(\sigma)))^{-1} \in {\rm Lie} 
\langle \langle e_0,e_1 \rangle \rangle 
\subset \mathbb{Q}_{\ell} 
\langle \langle e_0,e_1 \rangle \rangle$
where ${\rm Lie} 
\langle \langle e_0,e_1 \rangle \rangle$ is 
the complete free Lie algebra generated by $e_0$
and $e_1$.

Now
we shall introduce a certain function 
on $G_K$ which quantifies 
the loop ${\mathfrak f}
^{(\ell)}_{\gamma}(\sigma) \in \pi_1
^\ell(X_{\overline{K}}, \overrightarrow{01})
~(\sigma \in G_K)$ 
as a ``polylogarithm'' with values 
in $\ell$-adic numbers.
To intoroduce it,
we need the following preparations.
Let $\delta \in \pi_1
^{\top}(X^\an; \overrightarrow{01},
\overrightarrow{10})$
be a homotopy class of the canonical path on $X^\an$
as in the above Figure,
and
\[\gamma':= \delta \cdot \varphi(\gamma) 
\in \pi_1^{\top}
(X^\an; \overrightarrow{01}, 1-z),\]
where $\varphi \in {\rm Aut}(X^\an)$ 
given by $\varphi(*)=1-*$. 
We will choose $z^{1/n},
(1-z)^{1/n},
(1-\zeta^a_n z^{1/n})^{1/m}$
($
n,
m \in \bN,
a \in \bZ$)
as the specific $n$-th power roots 
determined by 
$\gamma 
\in \pi_1^{\top}
(X^{\an}; \overrightarrow{01}, z)$
(See [NW1] for details).
Let 
\[\rho_{z, \gamma}
: G_K \to \bZ_\ell~\qquad~
({\rm resp.~} \rho_{1-z, \gamma'}
: G_K \to \bZ_\ell)\]
be the Kummer 1-cocycle 
along
$\gamma$ (resp. $\gamma'$) defined by
$\sigma(z^{1/\ell^n})=
\zeta_{\ell^n}
^{\rho_{z, \gamma}(\sigma)}z^{1/\ell^n}$
(resp. $\sigma((1-z)^{1/\ell^n})=
\zeta_{\ell^n}
^{\rho_{1-z, \gamma'}(\sigma)}(1-z)^{1/\ell^n}$)
for $\sigma \in G_K$.
Denote by $\chi
: G_K \to \bZ_{\ell}^{\times}$
the $\ell$-adic cyclotomic character 
defined by
$\sigma(\zeta_{\ell^n})
=\zeta_{\ell^n}^{\chi(\sigma)}$
for $\sigma \in G_K$.

\begin{definition}[$\ell$-adic 
Galois polylogarithm function{\rm ;}
${\rm ~[NW1], [Wo; \S11]}$]\label{func}
We define a function $\ell i_{n}
^{(\ell)}(z,\gamma):
G_K \to \bQ_{\ell}~(n \geq 2)$
as a coefficient of ${\rm ad}(e_0)
^{n-1}(e_1)$ 
in the following Lie expression of 
${\rm log}(E({\mathfrak f}_{\gamma}
^{(\ell)}(\sigma)))^{-1}$
~for any $\sigma \in G_K$:
\[{\rm log}(E({\mathfrak f}_{\gamma}
^{(\ell)}(\sigma)))^{-1} \equiv 
\rho_{z, \gamma}(\sigma)e_0
+\rho_{1-z, \gamma'}(\sigma)e_1\hspace{2.5cm}\]
\[\hspace{5cm}+\sum_{n=2}
^{\infty}\ell i_{n}^{(\ell)}
(z,\gamma)(\sigma){\rm ad}(e_0)
^{n-1}(e_1)~\qquad~{\rm mod}~I_{e_1},\]
where
$I_{e_1}$ denotes the ideal generated by 
Lie monomials involving $e_1$ at least twice.
This function \[\ell i_{n}^{(\ell)}(z,\gamma)
:G_K \to \bQ_{\ell}~\qquad~(n \geq 2)\] 
is called the $n$-th {\it $\ell$-adic Galois 
polylogarithm function} 
associated to $\gamma \in \pi_1^{\top}
(X^\an; \overrightarrow{01}, z)$.
We shall also define
\[\ell i_{0}^{(\ell)}(z,\gamma)
:=\rho_{z, \gamma},~
\ell i_{1}^{(\ell)}(z,\gamma)
:=\rho_{1-z, \gamma'}.\]
\end{definition}

Here
we shall introduce a certain character 
on $G_K$ 
which generalizes 
the so-called Soul$\rm{\acute{e}}$ character.

\begin{definition}[$\ell$-adic Galois polylogarithmic 
character{\rm ;}${\rm ~[NW1]}$]\label{char}
For any integer $m \geq 1$,
we define $\tilde{\chi}_{m}
^{z, \gamma}:G_K \to \bZ_{\ell}$ 
by the following Kummer properties:
\[\zeta_{\ell^n}^{\tilde{\chi}_{m}
^{z, \gamma}(\sigma)}=
\sigma \left(\prod_{i=0}^{\ell^n -1}
(1-\zeta_{\ell^n}^{\chi(\sigma)
^{-1}i}z^{1/\ell^n})
^{\frac{i^{m-1}}{\ell^n}}\right) 
\bigg/ \prod_{i=0}^{\ell^n -1}
(1-\zeta_{\ell^n}
^{i+\rho_{z, \gamma}(\sigma)}z^{1/\ell^n})
^{\frac{i^{m-1}}{\ell^n}}~~~(n \geq 1). \]
This function 
\[\tilde{\chi}_{m}^{z, \gamma}:
G_K \to \bZ_{\ell}~\qquad~(m \geq 1)\] 
valued in the ring $\bZ_\ell$ of $\ell$-adic integers,
is called the $m$-th {\it $\ell$-adic Galois 
polylogarithmic character}\hspace{0.1cm}
associated to $\gamma \in \pi_1^{\top}
(X^\an; \overrightarrow{01}, z)$.
\end{definition}

In fact,
$\ell$-adic Galois 
polylogarithmic characters describe
values 
of the $\ell$-adic 
Galois polylogarithm function.

\begin{thm}[Explicit formula{\rm ;}
${\rm~[NW1; Corollary]}$]\label{explicit}
For each $\sigma \in G_K$,
the quantity $\ell i_{n}^{(\ell)}(z,\gamma)(\sigma)$ 
is explicitly described 
by $\ell$-adic 
Galois polylogarithmic characters 
as follows:
\[\displaystyle \ell i
^{(\ell)}_{n}(z,\gamma)(\sigma)
=(-1)^{n+1}\sum_{k=0}^{n-1}\dfrac{B_k}{k!}
(-\rho_{z, \gamma}(\sigma))
^{k}\dfrac{\tilde{\chi}_{n-k}
^{z, \gamma}(\sigma)}{(n-k-1)!}
~\qquad~(n \geq 1),\]
where
$B_k$ denotes the $k$-th Bernoulli number.
\end{thm}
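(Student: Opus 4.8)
\emph{Proof plan.}~ The plan is to separate a purely Lie-algebraic manipulation --- which is where the Bernoulli numbers enter --- from the arithmetic input coming from the explicit Galois action on \'etale paths. Fix $\sigma\in G_K$ and write $\rho:=\rho_{z,\gamma}(\sigma)\in\bZ_\ell$ and $N_\sigma:={\rm log}(E({\mathfrak f}_{\gamma}^{(\ell)}(\sigma)))^{-1}\in{\rm Lie}\langle\langle e_0,e_1\rangle\rangle$, so that ${\rm exp}(N_\sigma)=E({\mathfrak f}_{\gamma}^{(\ell)}(\sigma))^{-1}$. For $m\ge 1$ let $D_m(\sigma)\in\bQ_\ell$ be the coefficient of the associative monomial $e_0^{m-1}e_1$ in ${\rm exp}(N_\sigma)$, a ``naive'' $\ell$-adic Galois polylogarithm. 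I would deduce the theorem from the two identities
\[
\ell i_n^{(\ell)}(z,\gamma)(\sigma)\;=\;\sum_{k=0}^{n-1}\frac{B_k}{k!}\,\rho^{\,k}\,D_{n-k}(\sigma)\qquad(n\ge 1)
\]
and
\[
D_m(\sigma)\;=\;(-1)^{m+1}\,\frac{\tilde{\chi}_m^{z,\gamma}(\sigma)}{(m-1)!}\qquad(m\ge 1);
\]
substituting $m=n-k$ in the first, applying the second, and using $(-1)^{m+1}\rho^{\,k}=(-1)^{n+1}(-\rho)^{\,k}$ then reproduces the asserted formula. (For $n=1$ everything collapses to the statement $\tilde{\chi}_1^{z,\gamma}=\rho_{1-z,\gamma'}=\ell i_1^{(\ell)}(z,\gamma)$.)

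\emph{Step 1: the Bernoulli change of basis.}~ First I would note that, by the shape of the expansion in Definition~\ref{func}, modulo $I_{e_1}$ one has $N_\sigma=\rho\,e_0+P_\sigma$, where $P_\sigma:=\sum_{m\ge1}b_m(\sigma)\,{\rm ad}(e_0)^{m-1}(e_1)$ is the part of $N_\sigma$ in which $e_1$ occurs exactly once, with $b_1(\sigma)=\ell i_1^{(\ell)}(z,\gamma)(\sigma)=\rho_{1-z,\gamma'}(\sigma)$ and $b_m(\sigma)=\ell i_m^{(\ell)}(z,\gamma)(\sigma)$ for $m\ge2$ (the $e_1$-free part of a Lie series being necessarily a multiple of $e_0$). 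Since $e_0^{n-1}e_1$ has a single $e_1$, $D_n(\sigma)$ is computed from the part of ${\rm exp}(\rho e_0+P_\sigma)$ linear in $P_\sigma$; using the elementary fact that the part of ${\rm exp}(A+B)$ linear in $B$ equals $\sum_{i,j\ge0}\frac{1}{(i+j+1)!}A^iBA^j$, together with ${\rm ad}(e_0)^{m-1}(e_1)=\sum_{j}(-1)^{j}\binom{m-1}{j}e_0^{m-1-j}e_1e_0^{j}$, one reads off the coefficient of $e_0^{n-1}e_1$ as $D_n(\sigma)=\sum_{m=1}^{n}\frac{\rho^{\,n-m}}{(n-m+1)!}b_m(\sigma)$. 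Equivalently, $\sum_{n\ge1}D_n(\sigma)t^{n}=\bigl(\sum_{m\ge1}b_m(\sigma)t^{m}\bigr)\cdot\frac{e^{\rho t}-1}{\rho t}$; multiplying by $\frac{\rho t}{e^{\rho t}-1}=\sum_{k\ge0}\frac{B_k}{k!}(\rho t)^{k}$ and comparing coefficients yields the first identity. This step is formal and uses only Definition~\ref{func}.

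\emph{Step 2: from Magnus coefficients to $\tilde{\chi}_m$, and the main obstacle.}~ The substantive step is to compute $D_m(\sigma)$ directly from the Galois action. Using the explicit formula for the action of $\sigma$ on $\ell$-adic \'etale paths on $X_{\overline{K}}$ and on the $\ell^{n}$-level coverings carrying $z^{1/\ell^{n}}$ and the $(1-\zeta_{\ell^{n}}^{i}z^{1/\ell^{n}})^{1/\ell^{n}}$, one writes $\gamma\cdot\sigma(\gamma)^{-1}$ as an explicit word in $l_0$, $l_1$ and their conjugates whose exponents are built from $\rho_{z,\gamma}(\sigma)$, $\rho_{1-z,\gamma'}(\sigma)$ and the Kummer $1$-cocycles $\kappa_i^{(n)}(\sigma)$ of $1-\zeta_{\ell^{n}}^{i}z^{1/\ell^{n}}$ along $\gamma$. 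Applying $E$, inverting, taking ${\rm log}$ and extracting the coefficient of $e_0^{m-1}e_1$, the binomial weights of ${\rm ad}(e_0)^{m-1}(e_1)$ produce the factor $i^{m-1}$, while the normalization relating $e_0^{m-1}e_1$ to the shuffle-regularized $\ell$-adic iterated integral produces $(m-1)!$ and the sign, so that $D_m(\sigma)=\frac{(-1)^{m+1}}{(m-1)!}\lim_{n\to\infty}\frac{1}{\ell^{n}}\sum_{i=0}^{\ell^{n}-1}i^{m-1}\kappa_i^{(n)}(\sigma)$. On the other hand, taking $\log_{\zeta_{\ell^{n}}}$ of the product defining $\tilde{\chi}_m^{z,\gamma}(\sigma)$ in Definition~\ref{char} and using that $i\mapsto\chi(\sigma)^{-1}i$ and $i\mapsto i+\rho_{z,\gamma}(\sigma)$ permute $\bZ/\ell^{n}\bZ$, the ``base'' contributions cancel in the quotient by the distribution relation $\prod_{i=0}^{\ell^{n}-1}(1-\zeta_{\ell^{n}}^{i}w)=1-w^{\ell^{n}}$ and its weighted analogues, leaving $\tilde{\chi}_m^{z,\gamma}(\sigma)=\lim_{n}\frac{1}{\ell^{n}}\sum_{i}i^{m-1}\kappa_i^{(n)}(\sigma)$; comparing the two limits gives the second identity, and for $m=1$ the distribution relation alone gives $\tilde{\chi}_1^{z,\gamma}=\rho_{1-z,\gamma'}$. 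I expect this step to be the genuine obstacle: making the ``explicit Galois action on \'etale paths'' precise, keeping track of all the branch and root-of-unity choices carried by $\gamma$ and $\gamma'$, and checking that the finite-level weighted sums $\frac{1}{\ell^{n}}\sum_{i}i^{m-1}\kappa_i^{(n)}(\sigma)$ --- a priori only in $\frac{1}{\ell^{n}}\bZ_\ell$ --- stabilize $\ell$-adically with all constants matching on the nose; this is precisely the content of the theory of $\ell$-adic iterated integrals and their limit formulas of [NW1]. A clean way to organize both the convergence and the matching of constants is to transport the identity from the complex (de Rham) realization, where the analogue is the classical multiplication (distribution) formula for ${\rm Li}_m$, via the pro-$\ell$ comparison isomorphism.
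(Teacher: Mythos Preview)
The paper does not contain a proof of this theorem: it is stated as a quotation of the Corollary in [NW1] and immediately followed by the next paragraph of text, with no intervening proof environment. There is therefore nothing in the present paper to compare your proposal against.

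That said, your two-step decomposition is exactly the structure of the argument in [NW1]: one first relates the Lie coefficients $\ell i_n^{(\ell)}(z,\gamma)(\sigma)$ to the ``naive'' associative coefficients $D_m(\sigma)$ of $e_0^{m-1}e_1$ in $E({\mathfrak f}_\gamma^{(\ell)}(\sigma))^{-1}$ by the purely formal Bernoulli change of basis (your Step~1), and then identifies $D_m(\sigma)$ with $(-1)^{m+1}\tilde{\chi}_m^{z,\gamma}(\sigma)/(m-1)!$ via the explicit computation of the Galois action on the tower of Kummer covers determined by $z^{1/\ell^n}$ and $(1-\zeta_{\ell^n}^i z^{1/\ell^n})^{1/\ell^n}$ (your Step~2). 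Your Step~1 is correct as written. For Step~2 you are right that the genuine content lies in [NW1]: the convergence and exact normalization of the weighted Kummer sums is precisely their ``limit formula'', and your sketch correctly isolates where the $i^{m-1}$ weight, the $(m-1)!$, and the sign arise. If you want a self-contained write-up, the cleanest route is not the comparison-isomorphism transport you mention at the end, but rather the direct pro-$\ell$ computation carried out in [NW1]: one writes ${\mathfrak f}_\gamma^{(\ell)}(\sigma)$ as an explicit infinite product of conjugates $l_0^{a}\,l_1^{b}\,l_0^{-a}$ read off from the monodromy on the Kummer tower, and extracts the $e_0^{m-1}e_1$-coefficient termwise; the distribution relation then does the rest.
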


One reason for 
the name  ``$\ell$-adic Galois polylogarithm'' 
is that $\ell$-adic Galois polylogarithm 
functions/polylogarithmic characters 
satisfy some typical functional equations 
analogous to functional equations 
of the classical polylogarithm [NW2;~Chapter~6].
The following functional equation is 
one example of them.

\begin{thm}[a functional equation{\rm ;}
${\rm ~[NW2;~Chapter~6,~(6.14)]}$]\label{func}
The 2nd 
$\ell$-adic Galois polylogarithm function
holds the following functional equation.
For any $\sigma \in G_K$,
\[
\ell i_{2}^{(\ell)}(z,\gamma)(\sigma)
+\ell i_{2}^{(\ell)}
(1-z,\gamma')(\sigma)=\ell i_{2}^{(\ell)}
(\overrightarrow{10},\delta)(\sigma).
\]
By Theorem \ref{explicit},
this equation is equivalent to 
the following functional equation 
of the 2nd 
$\ell$-adic Galois polylogarithmic character.
For any $\sigma \in G_K$,
\[\tilde{\chi}_2^{z, \gamma}(\sigma)
+\tilde{\chi}_2^{1-z, \gamma'}(\sigma)
+\rho_{z, \gamma}(\sigma)
\rho_{1-z, \gamma'}(\sigma)
=\dfrac{1}{24}(\chi(\sigma)^2-1).\]
\end{thm}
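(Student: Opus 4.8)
The plan is to obtain the functional equation from the functoriality of the $\ell$-adic \'etale fundamental groupoid of $X$ under the involution $\varphi\in\Aut(X^\an)$, $\varphi(\ast)=1-\ast$. Since $\varphi$ is defined over $\mathbb Q$ it commutes with the $G_K$-action on \'etale paths, it carries $\overrightarrow{01}$ to $\overrightarrow{10}$ and $z$ to $1-z$, and a direct check with the standard real-interval path shows $\varphi(\delta)=\delta^{-1}$. Hence $\varphi$ induces a $G_K$-equivariant isomorphism $\pi_1^{\ell}(X_{\overline{K}},\overrightarrow{01})\cong\pi_1^{\ell}(X_{\overline{K}},\overrightarrow{10})$, and transport by $\delta$ yields an automorphism $\tilde\varphi:=\delta\cdot\varphi(-)\cdot\delta^{-1}$ of $\pi_1^{\ell}(X_{\overline{K}},\overrightarrow{01})$ with $\delta\cdot\varphi(\gamma')=\gamma$. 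Because $\gamma'=\delta\cdot\varphi(\gamma)$ and the Galois action is a groupoid functor commuting with $\varphi$, a short computation gives the cocycle identity
\[
{\mathfrak f}^{(\ell)}_{\gamma'}(\sigma)
=\gamma'\cdot\sigma(\gamma')^{-1}
=\delta\cdot\varphi\bigl(\gamma\cdot\sigma(\gamma)^{-1}\bigr)\cdot\sigma(\delta)^{-1}
=\tilde\varphi\bigl({\mathfrak f}^{(\ell)}_{\gamma}(\sigma)\bigr)\cdot{\mathfrak f}^{(\ell)}_{\delta}(\sigma)
\]
in $\pi_1^{\ell}(X_{\overline{K}},\overrightarrow{01})$, where ${\mathfrak f}^{(\ell)}_{\delta}(\sigma)=\delta\cdot\sigma(\delta)^{-1}$ is the cocycle of the degenerate datum $(\overrightarrow{10},\delta)$.

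Next I would transport this through the Magnus embedding $E$: the automorphism $\tilde\varphi$ corresponds to the unique continuous $\mathbb Q_\ell$-algebra automorphism $\Phi$ of $\mathbb Q_\ell\langle\langle e_0,e_1\rangle\rangle$ with $E\circ\tilde\varphi=\Phi\circ E$. Because $\varphi$ interchanges the punctures $0$ and $1$ and is orientation-preserving on $X^\an$, inspection of the standard loops shows $\tilde\varphi(l_0)=l_1$ and $\tilde\varphi(l_1)=l_0$ on the nose (with no conjugation), so $\Phi(e_0)=e_1$ and $\Phi(e_1)=e_0$; thus $\Phi$ restricts to the graded involution $e_0\leftrightarrow e_1$ of ${\rm Lie}\langle\langle e_0,e_1\rangle\rangle$, and in particular acts on the degree-$2$ line $\mathbb Q_\ell\,{\rm ad}(e_0)(e_1)$ by $-1$.

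Now apply $\log(E(-))^{-1}$ to the cocycle identity and compare components in degrees $\le 2$. Since $I_{e_1}$ meets ${\rm Lie}\langle\langle e_0,e_1\rangle\rangle$ trivially in degrees $\le 2$, the defining expansion of $\ell i^{(\ell)}_{n}$ shows that the degree-$1$ part of $\log(E({\mathfrak f}^{(\ell)}_{\gamma}(\sigma)))^{-1}$ equals $\rho_{z,\gamma}(\sigma)e_0+\rho_{1-z,\gamma'}(\sigma)e_1$ and its degree-$2$ part equals exactly $\ell i^{(\ell)}_{2}(z,\gamma)(\sigma)\,{\rm ad}(e_0)(e_1)$, and likewise for $\gamma'$ and for $\delta$. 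For $(\overrightarrow{10},\delta)$ one has the tangential-base-point facts $\ell i^{(\ell)}_{0}(\overrightarrow{10},\delta)=\ell i^{(\ell)}_{1}(\overrightarrow{10},\delta)=0$ and $\ell i^{(\ell)}_{2}(\overrightarrow{10},\delta)(\sigma)=\tfrac{1}{24}(1-\chi(\sigma)^2)$ (the $\ell$-adic analogue of $\zeta(2)=\pi^2/6$; see [NW1]), so the degree-$1$ part of $\log(E({\mathfrak f}^{(\ell)}_{\delta}(\sigma)))^{-1}$ vanishes and the single degree-$2$ bracket term in the Baker--Campbell--Hausdorff expansion of $\log$ of the right-hand side is killed. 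Comparing degree-$2$ parts, together with $\Phi({\rm ad}(e_0)(e_1))=-{\rm ad}(e_0)(e_1)$, yields
\[
\ell i^{(\ell)}_{2}(1-z,\gamma')(\sigma)=\ell i^{(\ell)}_{2}(\overrightarrow{10},\delta)(\sigma)-\ell i^{(\ell)}_{2}(z,\gamma)(\sigma),
\]
which is the first assertion; the equivalent character form then follows by inserting the explicit formula of Theorem~\ref{explicit} ($n=2$) into each of the three terms, using $\tilde\chi_{1}^{z,\gamma}=\rho_{1-z,\gamma'}$ and $\delta\cdot\varphi(\gamma')=\gamma$ (so that $\tilde\chi_{1}^{1-z,\gamma'}=\rho_{z,\gamma}$) and $\tilde\chi_{2}^{\overrightarrow{10},\delta}(\sigma)=\tfrac{1}{24}(\chi(\sigma)^2-1)$.

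The step I expect to be the main obstacle is the precise determination of $\Phi$: one must verify that $\tilde\varphi$ induces the \emph{honest} swap, not merely ``swap up to an inner automorphism''. If the conjugating path had nontrivial image in the abelianization of $\pi_1^{\ell}(X_{\overline{K}},\overrightarrow{01})$, then $\Phi(e_0),\Phi(e_1)$ would acquire degree-$2$ corrections and a spurious term proportional to $\rho_{z,\gamma}(\sigma)$ or to $\rho_{z,\gamma}(\sigma)\rho_{1-z,\gamma'}(\sigma)$ would enter the coefficient of ${\rm ad}(e_0)(e_1)$; it is exactly the exactness $\tilde\varphi(l_i)=l_{1-i}$, together with the vanishing of the Kummer $1$-cocycles of the tangential datum $(\overrightarrow{10},\delta)$, that makes the two polylogarithms add without correction. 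For the higher polylogarithms ($n\ge 3$) one can no longer restrict to degrees $\le 2$, the ideal $I_{e_1}$ and the non-graded part of $\Phi$ both intervene, and the reflection identity picks up additional lower-weight terms — which is why the clean form above is special to $n=2$.
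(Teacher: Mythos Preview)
The paper does not give its own proof of this statement; Theorem~\ref{func} is quoted verbatim from [NW2;~Chapter~6,~(6.14)] and used as a black box in the derivation of the reciprocity law (Corollary~\ref{reci}). So there is no in-paper argument to compare against.

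That said, your sketch is essentially the method by which this functional equation is established in the cited source. The ingredients you isolate --- Galois-equivariance of $\varphi$ (defined over $\bQ$), the cocycle identity ${\mathfrak f}^{(\ell)}_{\gamma'}(\sigma)=\tilde\varphi({\mathfrak f}^{(\ell)}_{\gamma}(\sigma))\cdot{\mathfrak f}^{(\ell)}_{\delta}(\sigma)$, the exact swap $\tilde\varphi(l_i)=l_{1-i}$ (hence $\Phi(e_i)=e_{1-i}$), the vanishing of the degree-$1$ part of $\log E({\mathfrak f}^{(\ell)}_{\delta}(\sigma))$, and finally the BCH comparison in degree $2$ --- are exactly the steps in the Nakamura--Wojtkowiak approach. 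Your identification of the delicate point is also accurate: the equality $\tilde\varphi(l_i)=l_{1-i}$ with no conjugation correction holds precisely because $\delta$ is the real-interval path (so $\varphi(\delta)=\delta^{-1}$), and this is why the paper insists on that specific $\delta$ and defines $\gamma':=\delta\cdot\varphi(\gamma)$ rather than allowing an arbitrary path to $1-z$. The tangential value $\tilde\chi_2^{\overrightarrow{10},\delta}(\sigma)=\tfrac{1}{24}(\chi(\sigma)^2-1)$ you invoke is Ihara's formula for the associator at $\overrightarrow{10}$ (cf.\ [Ih], [NW1]); once that is granted, your degree-$2$ bookkeeping gives the first display, and substituting Theorem~\ref{explicit} for $n=2$ together with $\tilde\chi_1^{z,\gamma}=\rho_{1-z,\gamma'}$ and $\tilde\chi_1^{1-z,\gamma'}=\rho_{z,\gamma}$ yields the second.
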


\begin{remark}
The latter functional equation 
in Theorem \ref{func} is 
an $\ell$-adic Galois analog 
of the functional equation 
\[{Li}_2(z)+{Li}_2(1-z)
+{\rm log}(z){\rm log}(1-z)
=\dfrac{\pi^2}{6},\]
where ${Li}_2(z)$ denotes 
the classical dilogarithm function.
\end{remark}

\subsection{\boldmath Triple $\ell$-th 
power residue symbols for $\ell=2,
3$}\label{triplesymbol}

The triple $\ell$-th 
power residue symbol 
is defined at present for $\ell=2,
3$ in [Mo],
[AMM].
In this section,
following [HM;
Section~4],
[Mo],
[AMM],
we recall the definition 
and some properties 
of triple $\ell$-th power 
residue symbols for $\ell=2,
3$.

\subsubsection{\bf \boldmath Case of $\ell=2$}\label{2}

Let $p_1,
p_2$ be distinct prime numbers
which satisfy
\begin{equation}\label{pc2}
p_i  \equiv 1~{\rm mod}~4~(i=1,2),~~
\left(\dfrac{p_i}{p_j}\right)
=1~\qquad~(1 \leq i \neq j \leq 2).
\end{equation}
By (\ref{pc2}),
there exist integers 
$x, y, w$ satisfying 
the following conditions 
[Am;~Lemma~1.1]:
\begin{equation}\label{xyw2}
x^2-p_1y^2-p_2w^2=0,
\end{equation}
\[{\rm gcd}(x, y, w)=1,
~~y \equiv 0~{\rm mod}~2,
~~x-y \equiv 1~{\rm mod}~4.\]
Note that the triple $(x,y,w)$ is not unique.
For such a pair $(x,y)$,
we let
\begin{equation}\label{th2}
\theta^{(2)}_{p_1,p_2} := x+\sqrt{p_1}y.
\end{equation}
Moreover,
we set
\begin{equation}\label{R2}
R^{(2)}\left(
=R_{p_1,p_2}^{(2)}\right)
:=\bQ(\sqrt{p_1}, \sqrt{p_2}, 
\sqrt{\theta_{p_1,p_2}^{(2)}}) \subset \bC,
\end{equation}
\begin{equation}\label{K2}
K^{(2)}\left(=K_{p_1,p_2}^{(2)}\right)
:=\bQ(\sqrt{p_1}, \sqrt{p_2}).
\end{equation}

\begin{thm}[{\rm}${\rm [Am; Theorem~1.2, 
Corollary~1.5]}$]\label{lemp2}
The field $R^{(2)}$ is 
a finite Galois extension
of $\bQ$ in $\bC$ 
which satisfies 
the following properties:\\
\begin{inparaenum}[(i)]
 \item The 
Galois group ${\rm Gal}(R^{(2)}/\bQ)$ is 
isomorphic to the Heisenberg group
\[H_3(\bZ/2\bZ):=\left\{ \left( \begin{array}{ccc} 1 & * & * \\ 0 & 1 & * \\ 0 & 0 & 1 \end{array} \right) \Big| \; * \in \bZ/2\bZ \; \right\}\]
(Note that this group $H_3(\bZ/2\bZ)$ 
is isomorphic 
to the dihedral group $D_8$ of order $8$); \\
 \item Prime numbers ramified 
in $R^{(2)}/\bQ$ are only $p_1,
p_2$ with ramification index $2$; \\
 \item The field $R^{(2)}$ is independent 
of the choice of the triple $(x,y,w)$. 
Hence,
$R^{(2)}/\bQ$ depends 
only on the pair $\{p_1, p_2\}$.
\end{inparaenum}
\end{thm}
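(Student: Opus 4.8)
The statement to prove is Theorem~\ref{lemp2}, which records three structural facts about the field $R^{(2)}=\bQ(\sqrt{p_1},\sqrt{p_2},\sqrt{\theta^{(2)}_{p_1,p_2}})$ and is attributed to Amano. Since the result is cited from [Am], the cleanest approach is to recall and reassemble the relevant computations there; I would organize the argument around the quadratic form relation $x^2-p_1y^2-p_2w^2=0$ of~\eqref{xyw2} and the explicit element $\theta^{(2)}_{p_1,p_2}=x+\sqrt{p_1}\,y$ of~\eqref{th2}.

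The plan is as follows. First I would verify that $R^{(2)}/\bQ$ is Galois: $K^{(2)}=\bQ(\sqrt{p_1},\sqrt{p_2})$ is visibly Galois over $\bQ$ with group $(\bZ/2\bZ)^2$, so it suffices to check that the conjugates of $\theta^{(2)}_{p_1,p_2}$ under $\Gal(K^{(2)}/\bQ)$ differ from $\theta^{(2)}_{p_1,p_2}$ by squares in $(K^{(2)})^{\times}$. The key identity is that the norm of $\theta^{(2)}_{p_1,p_2}$ from $\bQ(\sqrt{p_1})$ to $\bQ$ equals $x^2-p_1y^2=p_2w^2$, which is $p_2$ times a square; combined with the hypothesis $\left(\tfrac{p_i}{p_j}\right)=1$ one shows each Galois conjugate of $\sqrt{\theta^{(2)}_{p_1,p_2}}$ lies in $R^{(2)}$, so $R^{(2)}/\bQ$ is normal. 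Next, to identify $\Gal(R^{(2)}/\bQ)$ with $H_3(\bZ/2\bZ)$: the extension $R^{(2)}/K^{(2)}$ has degree $1$ or $2$, and one rules out degree $1$ by checking $\theta^{(2)}_{p_1,p_2}$ is not a square in $K^{(2)}$ (e.g.\ via ramification or a $2$-adic valuation argument using $x-y\equiv 1 \bmod 4$). Then $|\Gal(R^{(2)}/\bQ)|=8$, it is nonabelian because the conjugation action of $\Gal(K^{(2)}/\bQ)$ on $\sqrt{\theta^{(2)}_{p_1,p_2}}$ is nontrivial, and the only nonabelian groups of order $8$ are $D_8$ and $Q_8$; since $\Gal(R^{(2)}/\bQ)$ has more than one subgroup of order $2$ with nontrivial fixed field (namely the ones fixing $\bQ(\sqrt{p_1})$, $\bQ(\sqrt{p_2})$, etc.), it must be $D_8\cong H_3(\bZ/2\bZ)$.

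For part~(ii), I would compute the ramification directly: the primes dividing $\mathrm{disc}(R^{(2)}/\bQ)$ are among $\{2,p_1,p_2\}$ a priori, and the congruence conditions $p_i\equiv 1 \bmod 4$, $y\equiv 0\bmod 2$, $x-y\equiv 1\bmod 4$ in~\eqref{xyw2} are exactly engineered to kill ramification at $2$ — one checks that $2$ is unramified in $R^{(2)}$ by examining $\theta^{(2)}_{p_1,p_2}$ modulo a prime above $2$ and noting it is a unit congruent to a square. That $p_1,p_2$ each ramify with index exactly $2$ follows since they ramify in $K^{(2)}/\bQ$ already and the further extension $R^{(2)}/K^{(2)}$ can be shown unramified at the primes above $p_1,p_2$ (again using that $\theta^{(2)}_{p_1,p_2}$, resp.\ a suitable associate, is a unit times a square locally there). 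For part~(iii), independence of the triple $(x,y,w)$: any two normalized solutions of~\eqref{xyw2} are related by an automorphism of the conic, and one checks the ratio of the two resulting values of $\theta^{(2)}_{p_1,p_2}$ is a square in $K^{(2)}$, so the field $\bQ(\sqrt{p_1},\sqrt{p_2},\sqrt{\theta^{(2)}_{p_1,p_2}})$ is unchanged; alternatively, $R^{(2)}$ is characterized intrinsically by (i) and (ii) as the unique $D_8$-extension of $\bQ$ unramified outside $\{p_1,p_2\}$ with the prescribed quadratic subfields, which forces independence.

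\textbf{Main obstacle.} The delicate point is the ramification analysis at $2$ in part~(ii): one must show the auxiliary congruences in~\eqref{xyw2} genuinely force $2$ to be unramified in $R^{(2)}$, which requires a careful local computation at $2$ with the element $x+\sqrt{p_1}\,y$ — in particular checking that, modulo the relevant power of $2$, this element is a square in the completion. Everything else (the Galois property, the group identification, and independence of $(x,y,w)$) reduces to clean algebra once this local input is in hand.
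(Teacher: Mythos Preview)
The paper does not give its own proof of this theorem: it is stated with attribution to Amano [Am; Theorem~1.2, Corollary~1.5] and no proof environment follows. So there is nothing in the paper to compare your argument against --- the paper's ``proof'' is simply the citation.

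Your proposal is a reasonable reconstruction of Amano's argument and is broadly correct in outline. A couple of small remarks: your method of distinguishing $D_8$ from $Q_8$ via ``more than one subgroup of order~$2$'' should be phrased more carefully --- the clean invariant is that $Q_8$ has a unique element of order~$2$, whereas $\Gal(R^{(2)}/\bQ)$ visibly has several (e.g.\ the nontrivial elements fixing $\bQ(\sqrt{p_1},\sqrt{\theta^{(2)}_{p_1,p_2}})$ and $\bQ(\sqrt{p_2})$ respectively). Also, for part~(iii) your second alternative (characterizing $R^{(2)}$ intrinsically by (i) and (ii)) is circular as stated, since uniqueness of such a $D_8$-extension is exactly Theorem~\ref{charp2}, which in the paper is deduced from~(iii) rather than the other way around; your first alternative (comparing two solutions of the conic directly) is the honest route. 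But for the purposes of this paper none of this is needed: the correct thing to write is simply that the result is proved in [Am].
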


\begin{thm}[An arithmetic characterization 
of $R^{(2)}${\rm ;~}${\rm [Am; Theorem~2.1]}$]\label{charp2}
Let $p_1,
p_2$ be distinct prime numbers
satisfying (\ref{pc2}).
For a number field $L \subset \bC$, the following conditions are
equivalent:\\
\begin{inparaenum}[(1)]
 \item $L$ is the field  $R^{(2)}$; \\
 \item $L/\bQ$ is a Galois extension in which 
only prime numbers $p_1, p_2$ are ramified 
with ramification index $2$ and whose Galois group is 
isomorphic to the Heisenberg group
$H_3(\bZ/2\bZ)$.
\end{inparaenum}
\end{thm}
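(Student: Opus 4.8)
The plan is to prove the equivalence $(1)\Leftrightarrow(2)$ by showing that the two directions are essentially already contained in Theorem~\ref{lemp2} together with a rigidity argument for Heisenberg extensions. The implication $(1)\Rightarrow(2)$ is immediate: it is exactly the content of parts (i) and (ii) of Theorem~\ref{lemp2}, applied to $L=R^{(2)}$. So the real work is the converse $(2)\Rightarrow(1)$, and the strategy I would use is to reconstruct $R^{(2)}$ from the abstract data in condition~(2) by analyzing the subfield structure forced by the Heisenberg group $H_3(\bZ/2\bZ)$.

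First I would unwind the group theory of $G:={\rm Gal}(L/\bQ)\cong H_3(\bZ/2\bZ)$. The center $Z(G)$ has order $2$, the commutator subgroup $[G,G]$ equals $Z(G)$, and $G/[G,G]\cong (\bZ/2\bZ)^2$. Hence the maximal abelian subextension $L^{[G,G]}/\bQ$ is a biquadratic field, ramified only at $p_1,p_2$ with ramification index $2$; by genus theory (or the classification of quadratic fields ramified at a single prime and the condition $p_i\equiv 1\bmod 4$ — note $\bQ(\sqrt{-1})$ and $\bQ(\sqrt{2})$ are excluded since neither $2$ nor $-1$ has the right ramification behaviour here), the only quadratic fields ramified only at a single $p_i$ with $e=2$ are $\bQ(\sqrt{p_1})$ and $\bQ(\sqrt{p_2})$. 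Therefore $L^{[G,G]}=\bQ(\sqrt{p_1},\sqrt{p_2})=K^{(2)}$. This identifies the "bottom two floors" of $L$ canonically.

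Next, $L/K^{(2)}$ is a quadratic extension, say $L=K^{(2)}(\sqrt{\alpha})$ for some $\alpha\in K^{(2)\times}$, unramified outside $p_1,p_2$. The task is to pin down $\alpha$ up to squares and up to the norm/unit ambiguity, and to match it with $\theta^{(2)}_{p_1,p_2}$. Here I would use the condition that $L/\bQ$ is Galois with group $H_3$ rather than, say, $(\bZ/2\bZ)^3$ or $\bZ/4\bZ\times\bZ/2\bZ$: the Galois descent condition forces $\sigma(\alpha)/\alpha$ and $\tau(\alpha)/\alpha$ to be squares in $K^{(2)\times}$ for the two generators lifting the biquadratic Galois group, while the non-commutativity (the commutator lands in the order-$2$ group acting on $\sqrt{\alpha}$ by $-1$) forces a precise cocycle condition. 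Combined with the ramification restriction — which limits $\alpha$ to be supported (up to squares) on the primes above $p_1,p_2$ in $K^{(2)}$, hence a product of a unit and the relevant prime elements — this should cut the possibilities for $\alpha$ down to a single class, namely that of $\theta^{(2)}_{p_1,p_2}$ (using that $x-y\equiv1\bmod 4$ normalizes away the remaining sign/unit freedom, which is why Amano's lemma imposes that congruence). Then $L=K^{(2)}(\sqrt{\theta^{(2)}_{p_1,p_2}})=R^{(2)}$, as desired.

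The main obstacle I anticipate is the last step: showing uniqueness of $\alpha$ up to squares. A priori there could be several quadratic extensions of $K^{(2)}$ unramified outside $\{p_1,p_2\}$, and one must check that the Heisenberg (as opposed to abelian) Galois structure over $\bQ$ together with the ramification index exactly $2$ selects precisely one of them, and that it is $R^{(2)}$. I would handle this either by a direct cohomological computation — identifying the embedding problem $1\to\bZ/2\bZ\to H_3\to(\bZ/2\bZ)^2\to1$ with an element of $H^2(G_\bQ,\bZ/2\bZ)$ and showing its solutions unramified-outside-$\{p_1,p_2\}$ form a single class, which is governed by the triviality of the relevant Rédei symbol condition $(p_1,p_2)=1$ already encoded in (\ref{pc2}) — or by appealing directly to Theorem~\ref{lemp2}(iii), which says $R^{(2)}$ depends only on $\{p_1,p_2\}$, and counting: any $L$ as in (2) produces, via the same recipe, a solution to the same embedding problem, and two solutions differ by an unramified-outside-$\{p_1,p_2\}$ quadratic twist that is incompatible with the ramification index being exactly $2$ at both primes unless the twist is trivial. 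I would cite [Am; Theorem~2.1] for the fine normalization details rather than reproduce them, since the congruence bookkeeping ($x-y\equiv1\bmod4$, $y$ even) is routine but lengthy.
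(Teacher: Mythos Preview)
The paper does not give its own proof of this theorem: it is stated in the preliminaries with the attribution ``[Am; Theorem~2.1]'' and no proof environment follows. So there is no argument in this paper to compare your proposal against; the result is simply quoted from Amano.

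As for your sketch on its own merits: the strategy is the standard one and is essentially how Amano proceeds. The identification of the maximal abelian subextension as $K^{(2)}=\bQ(\sqrt{p_1},\sqrt{p_2})$ is correct --- the hypothesis $p_i\equiv 1\bmod 4$ is exactly what rules out ramification at $2$ in the candidate quadratic subfields, so the three quadratic subfields are forced to be $\bQ(\sqrt{p_1})$, $\bQ(\sqrt{p_2})$, $\bQ(\sqrt{p_1p_2})$. The genuine content, as you rightly flag, is the uniqueness of the top quadratic layer $L/K^{(2)}$ among those giving a Heisenberg (rather than abelian or $D_8$-in-the-wrong-way) extension with the prescribed ramification. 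Your framing as an embedding problem for $1\to\bZ/2\bZ\to H_3(\bZ/2\bZ)\to(\bZ/2\bZ)^2\to 1$ with ramification constraints is the right one, but the sentence ``two solutions differ by an unramified-outside-$\{p_1,p_2\}$ quadratic twist that is incompatible with the ramification index being exactly $2$ \dots\ unless the twist is trivial'' is where the actual work hides: a twist by $\sqrt{p_1}$, $\sqrt{p_2}$, or $\sqrt{p_1p_2}$ keeps the ramification set the same, and you must check that such twists either change the isomorphism type of the Galois group or violate the ramification-index condition. That check is not hard but it is not automatic, and a complete proof would need to do it rather than cite it.
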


Here
we take another prime number 
$p_3$ satisfying
\begin{equation}\label{p_32}
p_3 \equiv 1~{\rm mod}~4,~~
\left(\dfrac{p_i}{p_j}\right)
=1~\qquad~(1 \leq i \neq j \leq 3).
\end{equation}
Note that $p_3$ 
is unramified in $R^{(2)}/\bQ$ 
by Theorem~\ref{lemp2}~(ii).
Then
we introduce an arithmetic symbol 
which controls
the decomposition 
of $p_3$ in the nilpotent extension $R^{(2)}/\bQ$.

\begin{definition}[Triple quadratic residue symbol{\rm ;~}
${\rm [Mo;~Section~8.4]}$]\label{s2}
For prime ideals $(p_1),(p_2),(p_3)$ of $\bZ$ 
where prime numbers $p_1,p_2,p_3$
satisfy (\ref{pc2}) and (\ref{p_32}),
the {\it triple quadratic residue symbol} 
is defined by 
\[[(p_1),(p_2),(p_3)]_2
:=(-1)^{\mu_2(123)} \in \{1,-1\},\]
where 
$\mu_2(123) \in \bZ/2\bZ$ 
is the mod 2 Milnor invariant 
for the prime numbers $p_1,p_2,p_3$.
See [Mo;~Section~8.4] 
for the detailed account 
of $\mu_2(123)$.
\end{definition}

Let $\tilde{\p}_i$ be a prime ideal of $K^{(2)}$ 
above $p_i$.
By (\ref{pc2}) and (\ref{p_32}),
the primes $p_1,
p_2,
p_3$ are completely decomposed or ramified 
in $K^{(2)}/\bQ$
as follows.

\vspace{0.5cm}
\begin{center}
\begin{tikzpicture}
\draw (0,0.5) -- (0,1.5);
\draw (0,2.5) -- (0,3.5);
\draw (0,4.5) -- (0,5.5);
\draw (4,0.5) -- (4,1.5);
\draw (4,2.5) -- (4,3.5);
\draw (7.5,0.5) -- (7.5,1.5);
\draw (7.5,2.5) -- (7.5,3.5);
\draw (11,0.5) -- (11,3.5);
\node at (0,0) {$\bQ$};
\node at (0,2) {$\bQ(\sqrt{p_1})$};
\node at (0,4) {$K^{(2)}$};
\node at (0.57,3.9) {$=$};
\node at (1.9,3.9) {$\bQ(\sqrt{p_1},\sqrt{p_2})$};
\node at (0,6) {$R^{(2)}$};
\node at (0.57,5.9) {$=$};
\node at (2,6) {$K^{(2)}(\sqrt{\theta^{(2)}_{p_1,p_2}})$};
\node at (4,0) {$p_1$};
\node at (4.45,2) {$\tilde{\p}_1 \cap \bQ(\sqrt{p_1})$};
\node at (4,4) {$\tilde{\p}_1$};
\node at (5,1) {ramified};
\node at (5.17,3.18) {completely};
\node at (5.28,2.82) {decomposed};
\node at (7.5,0) {$p_2$};
\node at (7.95,2) {$\tilde{\p}_2 \cap \bQ(\sqrt{p_1})$};
\node at (7.5,4) {$\tilde{\p}_2$};
\node at (8.67,1.18) {completely};
\node at (8.78,0.82) {decomposed};
\node at (8.5,3) {ramified};
\node at (11,0) {$p_3$};
\node at (11,4) {$\tilde{\p}_3$};
\node at (12.17,2.18) {completely};
\node at (12.28,1.82) {decomposed};
\end{tikzpicture}
\end{center}
\vspace{0.5cm}

\begin{thm}[{\rm{[Mo;~Section~8.4,
~Theorem~8.25]}}]\label{22}
Let 
$\sigma_{p_3}:={\rm Frob}_{\tilde{\p}_3} 
\in {\rm Gal}(R^{(2)}/K^{(2)})$ 
be the Frobenius substitution of
$\tilde{\p}_3$ in $R^{(2)}/K^{(2)}$.
Then we have
\[[(p_1),(p_2),(p_3)]_2=\dfrac{\sigma_{p_3}
(\sqrt{\theta^{(2)}_{p_1,p_2}})}{\sqrt{\theta^{(2)}_{p_1,p_2}}}.\]
In particular,
$[(p_1),(p_2),(p_3)]_2=1$ if and only if $p_3$ is 
completely decomposed in $R^{(2)}/\bQ$.
\end{thm}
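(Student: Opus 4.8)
The plan is to unwind the definition of the mod $2$ Milnor invariant $\mu_2(123)$ — which in the sense of [Mo] is extracted from a Magnus expansion inside a maximal pro-$2$ Galois group — and to identify it with the action of $\sigma_{p_3}$ on $\sqrt{\theta^{(2)}_{p_1,p_2}}$, using the arithmetic characterization of $R^{(2)}$ (Theorem~\ref{charp2}) as the bridge between the abstract Heisenberg extension and the explicit one. First I would pin down $\sigma_{p_3}$. Because $p_3$ is unramified in $R^{(2)}/\bQ$ by Theorem~\ref{lemp2}(ii) and completely decomposed in $K^{(2)}/\bQ$ by (\ref{p_32}), the decomposition group of a prime $\tilde{\p}_3$ over $p_3$ in ${\rm Gal}(R^{(2)}/\bQ)\cong H_3(\bZ/2\bZ)$ is cyclic, generated by $\sigma_{p_3}$, and is contained in ${\rm Gal}(R^{(2)}/K^{(2)})$, which by Theorem~\ref{lemp2}(i) is the centre of $H_3(\bZ/2\bZ)$, of order $2$. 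Writing $c$ for its nontrivial element and using $R^{(2)}=K^{(2)}(\sqrt{\theta^{(2)}_{p_1,p_2}})$, one has $c(\sqrt{\theta^{(2)}_{p_1,p_2}})=-\sqrt{\theta^{(2)}_{p_1,p_2}}$, so $\sigma_{p_3}(\sqrt{\theta^{(2)}_{p_1,p_2}})/\sqrt{\theta^{(2)}_{p_1,p_2}}=(-1)^{e}$ where $\sigma_{p_3}=c^{e}$ with $e\in\{0,1\}$. Hence the displayed formula is equivalent to the congruence $e\equiv\mu_2(123)\pmod 2$, and the ``in particular'' assertion is then automatic: $\sigma_{p_3}=1$ is the same as $\tilde{\p}_3$ splitting completely in $R^{(2)}/K^{(2)}$, which — since $p_3$ already splits in $K^{(2)}/\bQ$ — is the same as $p_3$ splitting completely in $R^{(2)}/\bQ$.

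It remains to prove $e\equiv\mu_2(123)$, which I would do by running Milnor's algorithm in its arithmetic form. Let $\mathfrak{G}$ be the Galois group of the maximal pro-$2$ extension of $\bQ$ unramified outside $\{p_1,p_2,p_3,\infty\}$, with meridians $\tau_1,\tau_2,\tau_3$ (inertia generators over the $p_i$) and a longitude $\alpha_3$ (a suitable Frobenius lift at $p_3$). The Magnus homomorphism $\tau_i\mapsto 1+X_i$ into $\bZ_2\langle\langle X_1,X_2\rangle\rangle$ sends
\[\alpha_3\longmapsto 1+\mu_2(13)X_1+\mu_2(23)X_2+(\text{degree-}2\text{ terms})+(\deg\ge 3),\]
and the degree-$1$ linking coefficients vanish, $\mu_2(13)=\mu_2(23)=0$, precisely because $(p_i/p_j)=1$ in (\ref{pc2}) and (\ref{p_32}); consequently $\mu_2(123)$, the mod $2$ reduction of the coefficient of $X_1X_2$, is a well-defined element of $\bZ/2\bZ$, and (the cup-product obstruction being killed by the same vanishing) there is a surjection $q:\mathfrak{G}\twoheadrightarrow H_3(\bZ/2\bZ)$ carrying $\tau_1,\tau_2$ to the standard unipotent generators, $\tau_3$ to $1$, and cutting out a Heisenberg extension of $\bQ$ ramified with index $2$ only at $p_1,p_2$. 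By the arithmetic characterization Theorem~\ref{charp2} that extension is $R^{(2)}$, so $q$ is the quotient map $\mathfrak{G}\twoheadrightarrow{\rm Gal}(R^{(2)}/\bQ)$; under $q$ the decomposition group at $p_3$ is generated by $q(\alpha_3)$, which lies in the centre $\langle c\rangle$ (as $\alpha_3$ is trivial in ${\rm Gal}(K^{(2)}/\bQ)$), and comparing Magnus expansions ($\bar c=[\bar\tau_1,\bar\tau_2]$) gives $q(\alpha_3)=c^{\mu_2(123)}$. Therefore $\sigma_{p_3}=c^{\mu_2(123)}$, i.e. $e\equiv\mu_2(123)\pmod 2$, which finishes the proof.

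The main obstacle is the identification in the second paragraph: one must show that the explicit Kummer generator $\theta^{(2)}_{p_1,p_2}=x+\sqrt{p_1}\,y$ — built from a $\bQ$-point of the conic $x^2-p_1y^2-p_2w^2=0$ subject to ${\rm gcd}(x,y,w)=1$, $y\equiv 0\bmod 2$, $x-y\equiv 1\bmod 4$ — realizes \emph{exactly} the Heisenberg extension cut out by the mod-$2$ Magnus data, and that under the resulting isomorphism the longitude $\alpha_3$ is carried to $\sigma_{p_3}={\rm Frob}_{\tilde{\p}_3}$. Rédei's conic construction and Amano's Theorem~\ref{charp2} together force the two extensions to agree as fields, but one still has to check the compatibility of the chosen meridians and longitude with the splitting of $p_3$ in $K^{(2)}$ and with the normalizations that pin $\theta^{(2)}_{p_1,p_2}$ down modulo squares. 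All the other ingredients — the structure of $H_3(\bZ/2\bZ)$ from Theorem~\ref{lemp2}, the vanishing of the linking invariants, and the formal Magnus-expansion bookkeeping — are routine.
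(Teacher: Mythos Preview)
The paper does not supply a proof of this theorem: it is quoted verbatim from Morishita's book [Mo; Section~8.4, Theorem~8.25] as background material, so there is no in-house argument to compare against. Your sketch is in fact a faithful outline of the proof given in [Mo]: one reduces the identity to $\sigma_{p_3}=c^{\mu_2(123)}$ in the centre of $H_3(\bZ/2\bZ)$, reads off $\mu_2(123)$ as the $X_1X_2$-coefficient in the Magnus expansion of the Frobenius longitude $\alpha_3$ inside the maximal pro-$2$ quotient unramified outside $\{p_1,p_2\}$, and then invokes the arithmetic characterization (Theorem~\ref{charp2}) to identify the abstract Heisenberg quotient with $R^{(2)}$. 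Your identification of the ``main obstacle'' --- matching the explicit R\'edei--Amano generator $\theta^{(2)}_{p_1,p_2}$ with the Heisenberg extension coming from the Magnus data, and matching $q(\alpha_3)$ with ${\rm Frob}_{\tilde{\p}_3}$ --- is exactly the point where [Mo] does the real work; the rest is, as you say, bookkeeping. One small notational quibble: you write the Magnus ring as $\bZ_2\langle\langle X_1,X_2\rangle\rangle$ without saying what happens to $\tau_3$; implicitly you are passing to the pro-$2$ group unramified outside $\{p_1,p_2\}$ (so that $\tau_3$ is killed and $\alpha_3$ survives as a Frobenius element), and it would be cleaner to say so.
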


\begin{rem}
The right side of the equation in Theorem \ref{22} 
is the {\it R${\acute{e}}$dei symbol}
introduced by {L. R${\acute{e}}$dei} in
{\rm [R$\rm{\acute{e}}$]}:
\[[p_1,p_2,p_3]_{\rm R\acute{e}dei}:=\dfrac{\sigma_{p_3}
(\sqrt{\theta^{(2)}_{p_1,p_2}})}{\sqrt{\theta^{(2)}_{p_1,p_2}}}.\]
That is,
Theorem \ref{22} means the triple quadratic residue symbol 
is equal to
the {\it R${\acute{e}}$dei symbol}:
\[[(p_1),(p_2),(p_3)]_2=[p_1,p_2,p_3]_{\rm R\acute{e}dei}.\]
\end{rem}

In {\rm [R$\rm{\acute{e}}$]},
L. R${\rm \acute{e}}$dei proved 
the following reciprocity law 
of the triple symbol.
In [Am],
F. Aamano gave another simple proof of it.

\begin{thm}[Reciprocity law of triple quadratic residue symbols{\rm ;~}
${\rm [R\rm{\acute{e}}],~[Am]}$]
Let $\rho \in S_3$ be any permutation of the set $\{1,
2,
3\}$.
Then
\[[(p_1),(p_2),(p_3)]_2 \cdot [(p_{\rho(1)}),(p_{\rho(2)}),(p_{\rho(3)})]_2
=1,\]
that is
$[(p_1),(p_2),(p_3)]_2=[(p_{\rho(1)}),(p_{\rho(2)}),(p_{\rho(3)})]_2$.
\end{thm}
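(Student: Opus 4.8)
The plan is to reduce the assertion to the invariance of $[(p_1),(p_2),(p_3)]_2$ under two transpositions generating $S_3$, say $(1\,2)$ and $(2\,3)$, and to record first that, since $[(p_1),(p_2),(p_3)]_2 \in \{1,-1\}$, the identity $[(p_1),(p_2),(p_3)]_2 \cdot [(p_{\rho(1)}),(p_{\rho(2)}),(p_{\rho(3)})]_2 = 1$ is equivalent to the equality of the two symbols. One also notes that conditions (\ref{pc2}) and (\ref{p_32}) are symmetric in $p_1,p_2,p_3$, so that $[(p_{\rho(1)}),(p_{\rho(2)}),(p_{\rho(3)})]_2$ is defined via Definition~\ref{s2} for every $\rho \in S_3$.

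First I would treat the transposition $(1\,2)$. Because (\ref{pc2}) and (\ref{xyw2}) are symmetric in $p_1$ and $p_2$, the field $R^{(2)}_{p_2,p_1}$ is defined, and by Theorem~\ref{lemp2} (applied with the roles of $p_1$ and $p_2$ interchanged) it is a Galois extension of $\bQ$ ramified exactly at $\{p_1,p_2\}$ with ramification index $2$ and Galois group isomorphic to $H_3(\bZ/2\bZ)$. By the arithmetic characterization of Theorem~\ref{charp2}, this property determines the field uniquely, so $R^{(2)}_{p_2,p_1} = R^{(2)}_{p_1,p_2}$ inside $\bC$. Now Theorem~\ref{22} says that $[(p_1),(p_2),(p_3)]_2 = 1$ (resp. $[(p_2),(p_1),(p_3)]_2 = 1$) precisely when $p_3$ is completely decomposed in $R^{(2)}_{p_1,p_2}/\bQ$ (resp. in $R^{(2)}_{p_2,p_1}/\bQ$); since the two fields coincide, $[(p_1),(p_2),(p_3)]_2 = [(p_2),(p_1),(p_3)]_2$.

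The harder part is the transposition $(2\,3)$, i.e. $[(p_1),(p_2),(p_3)]_2 = [(p_1),(p_3),(p_2)]_2$, because now the field in which one tests complete decomposition itself changes: on the left one tests $p_3$ in $R^{(2)}_{p_1,p_2}$, on the right $p_2$ in $R^{(2)}_{p_1,p_3}$, and the characterization argument no longer applies directly. Here I would unwind Theorem~\ref{22}: choosing a prime $\tilde{\p}_3$ of $K^{(2)}_{p_1,p_2}$ above $p_3$, the Frobenius $\sigma_{p_3}$ acts on $\sqrt{\theta^{(2)}_{p_1,p_2}}$ through the quadratic residue symbol of $\theta^{(2)}_{p_1,p_2} = x + \sqrt{p_1}\,y$ modulo $\tilde{\p}_3$, so that $[(p_1),(p_2),(p_3)]_2$ is a Legendre-type symbol in the residue field $\bF_{p_3}$. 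Using the defining relation $x^2 - p_1 y^2 = p_2 w^2$ and the normalizing congruences $y \equiv 0 \pmod 2$, $x - y \equiv 1 \pmod 4$, $p_i \equiv 1 \pmod 4$, I would rewrite this symbol as a product of quadratic norm-residue (Hilbert) symbols over $\bQ$ at the places $p_1, p_2, p_3$ and $2$, and then invoke Hilbert's reciprocity law $\prod_v (a,b)_v = 1$ to recognize the outcome as an expression symmetric in $p_2$ and $p_3$ (indeed in all three primes). This is, in essence, R\'edei's original computation, streamlined in [Am]; alternatively, one may simply quote the shuffle and cyclic-symmetry relations for the mod $2$ Milnor invariant $\mu_2(123)$ recalled in [Mo;~Section~8.4], observing that modulo $2$ the anti-symmetry $\mu_2(ijk) = -\mu_2(jik)$ becomes symmetry, which together with cyclicity yields full $S_3$-invariance.

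I expect the main obstacle to lie in this last step: the explicit reduction of the residue symbol $\bigl(\tfrac{\theta^{(2)}_{p_1,p_2}}{\tilde{\p}_3}\bigr)$ over the biquadratic field $K^{(2)}_{p_1,p_2}$ to rational Hilbert symbols, carried out carefully enough that the $2$-adic normalizations imposed in (\ref{xyw2}) make all signs match and the final formula is visibly invariant under permutations of $p_1, p_2, p_3$. Everything preceding it is formal once Theorems~\ref{lemp2}, \ref{charp2} and \ref{22} are available.
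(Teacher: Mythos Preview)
The paper does not supply its own proof of this theorem: it is stated as a cited preliminary result from [R\'e] and [Am], with no argument given. There is therefore no ``paper's proof'' to compare against directly. What the paper \emph{does} do, later (Corollary~\ref{reci}), is re-derive the single transposition case $[\p_1,\p_2,\p_3]_\ell\cdot[\p_2,\p_1,\p_3]_\ell=1$ for $\ell\in\{2,3\}$ by an entirely different mechanism: expressing both symbols through the $\ell$-adic Galois dilogarithm characters $\tilde{\chi}_2^{z}$ and $\tilde{\chi}_2^{1-z}$ (Theorems~\ref{tl} and~\ref{aaa}) and then invoking the Nakamura--Wojtkowiak functional equation (Theorem~\ref{func}). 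That argument never touches Hilbert symbols or the characterization Theorem~\ref{charp2}, and it does not attempt the $(2\,3)$ transposition at all.

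Your plan is a faithful outline of the classical route in [Am]. The $(1\,2)$ step via Theorem~\ref{charp2} is clean and correct. For $(2\,3)$ you correctly identify the crux --- reducing $\bigl(\tfrac{\theta^{(2)}_{p_1,p_2}}{\tilde{\p}_3}\bigr)$ to rational Hilbert symbols and applying the product formula --- but you stop short of carrying it out, and that computation is the entire content of the theorem; without it the proposal is a proof sketch rather than a proof. The alternative you mention (shuffle and cyclic relations for $\mu_2$) is legitimate but amounts to quoting the result from [Mo], which is circular if the goal is an independent proof. In short: your outline is sound and matches the literature, but the paper neither proves this statement nor uses your method for the partial case it does address.
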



\subsubsection{\bf \boldmath Case of $\ell=3$}\label{3}
In this section, we essentially follow [HM,~Section~4.2] 
for various assumptions.
Let
$k:=\bQ(\zeta_{3})=\bQ(\sqrt{-3})$
be the Eisenstein field where $\zeta_3:=\exp(\frac{2\pi\sqrt{-1}}{3})
=\frac{-1+\sqrt{-3}}{2}$.
Let
$\p_i=(p_i)~(i=1,2)$ 
be
distinct prime ideals 
of $k$ which satisfy
\begin{equation}\label{pc3}
{\rm N}\p_i  \equiv 1~{\rm mod}~9~(i=1,2),~~
\left(\dfrac{p_i}{p_j}\right)_3
=1~\qquad~(1 \leq i \neq j \leq 2).
\end{equation}
Following [AMM,~Corollary~5.9],
[HM,~Section~4.2],
we assume that
\begin{equation}\label{assume3}
{\rm each~}p_i {\rm ~is~an~associate~of~a~rational~prime~number~in}~k.
\end{equation}
There is an ambiguity of the choice 
of $p_i$ up to 
units $\bZ[\zeta_3]^{\times}
=\{\pm \zeta_3^m \mid m=0,1,2\}$,
but we can take it uniquely 
by posing the following 
condition (cf. [AMM;~Lemma~1.1]):
\begin{equation}\label{pcc3}
p_i \equiv 1~{\rm mod}~(3\sqrt{-3}).
\end{equation}
We fix such a prime element $p_i \in \bZ[\zeta_3]$.
We set $K_i:=k(\sqrt[3]{p_i})$.
The field $K_i$ is a cyclic extension 
of degree $3$ over $k$
in which only $\p_i$ is ramified
(cf. [AMM;~Theorem~3.5]). 
Let $\phi$ be a generator 
of $\Gal(K_1/k)$ determined by 
$\phi(\sqrt[3]{p_1})=\zeta_3\sqrt[3]{p_1}$.
By (\ref{pc3}) and (\ref{pcc3}),
there exist algebraic integers
\begin{equation}\label{alpha}
\alpha_{p_1,p_2} \in {\mathcal O}_{K_1},
~\qquad~
w \in \bZ[\zeta_3]
\end{equation}
together with prime ideals ${\mathfrak P}, 
{\mathfrak B}$ of $K_1$
which satisfy the following conditions
[AMM;~Proposition~5.6]:
\begin{equation}\label{norm}
N_{K_1/k}(\alpha_{p_1,p_2})=p_2w^3,
\end{equation}
\[(\alpha_{p_1,p_2})={\mathfrak P}
^e{\mathfrak B}^f,~~
(e,3)=1,~
({\mathfrak B}, 3)=1,~
f \equiv 0~{\rm mod}~3.\]
Note that $\alpha_{p_1,p_2}$ is not unique.
For such an $\alpha_{p_1,p_2} 
\in {\mathcal O}_{K_1}$,
we let
\begin{equation}\label{oth3}
\theta^{(3)}_{p_1,p_2}
:=\phi(\alpha_{p_1,p_2})(\phi^2(\alpha_{p_1,p_2}))^2.
\end{equation}
Moreover,
we set 
\begin{equation}\label{K3}
R^{(3)}\left(=R_{p_1,p_2}^{(3)}\right)
:=k(\sqrt[3]{p_1}, \sqrt[3]{p_2}, 
\sqrt[3]{\theta^{(3)}_{p_1,p_2}}) \subset \bC,
\end{equation}
\begin{equation}\label{K3}
K^{(3)}\left(=K_{p_1,p_2}^{(3)}\right)
:=k(\sqrt[3]{p_1}, \sqrt[3]{p_2}).
\end{equation}

By using the assumption (\ref{assume3}),
we obtain the following theorem.

\begin{thm}[{\rm}${\rm [AMM;~Theorem~5.11,
~Corollary~5.12]}$]\label{lemp3}
The field $R^{(3)}$ is 
a finite Galois extension
of $k$ in $\bC$ 
which holds the following properties:\\
\begin{inparaenum}[(i)]
 \item The Galois group 
${\rm Gal}(R^{(3)}/k)$ is isomorphic to
the Heisenberg group
\[H_3(\bZ/3\bZ):=\left\{ \left( \begin{array}{ccc} 1 & * & * \\ 0 & 1 & * \\ 0 & 0 & 1 \end{array} \right) \Big| \; * \in \bZ/3\bZ \; \right\};\]
\\
 \item Prime ideals ramified 
in $R^{(3)}/k$ are only $\p_1,
\p_2$ with ramification index $3$; \\
 \item The field $R^{(3)}$ is independent 
of the choice of 
$\alpha_{p_1,p_2} \in {\mathcal O}_{K_1}$.
Hence,
$R^{(3)}/k$ depends only 
on the pair $\{\p_1, \p_2\}$.
\end{inparaenum}
\end{thm}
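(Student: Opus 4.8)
The plan is to realise $R^{(3)}$ as the Kummer extension $K^{(3)}(\sqrt[3]{\theta})$, $\theta:=\theta^{(3)}_{p_1,p_2}$, and to analyse the tower $k\subset K^{(3)}\subset R^{(3)}$ by hand, the whole argument resting on two conjugation identities for $\theta$. Recall from [AMM;~Theorem~3.5] that each $K_i=k(\sqrt[3]{p_i})$ is cyclic of degree $3$ over $k$, ramified only at $\p_i$ with index $3$ (the prime $\mathfrak{l}=(\sqrt{-3})$ above $3$ staying unramified because of the primary normalisation (\ref{pcc3})); since $\p_1\neq\p_2$ and $\zeta_3\in k$, the compositum $K^{(3)}=K_1K_2$ is a $(\bZ/3\bZ)^2$-extension of $k$, ramified exactly at $\p_1,\p_2$ with index $3$ there. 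Extend $\phi$ to $\Gal(K^{(3)}/k)$ by $\phi(\sqrt[3]{p_2})=\sqrt[3]{p_2}$, and let $\psi\in\Gal(K^{(3)}/k)$ fix $\sqrt[3]{p_1}$ with $\psi(\sqrt[3]{p_2})=\zeta_3\sqrt[3]{p_2}$, so $\Gal(K^{(3)}/k)=\langle\phi\rangle\times\langle\psi\rangle$. Writing $\alpha=\alpha_{p_1,p_2}$, from $\theta=\phi(\alpha)\phi^2(\alpha)^2$, from $\phi^3=\mathrm{id}$ on $K_1$, and from (\ref{norm}) one computes directly
\[ \theta\cdot\phi(\theta)\cdot\phi^2(\theta)=N_{K_1/k}(\alpha)^3=(p_2w^3)^3,\qquad \frac{\theta}{\phi(\theta)}=\frac{N_{K_1/k}(\alpha)}{\alpha^3}=p_2\Bigl(\frac{w}{\alpha}\Bigr)^{3}. \]
In particular $\phi(\theta)/\theta=\bigl(\alpha/(w\sqrt[3]{p_2})\bigr)^{3}\in (K^{(3)\times})^3$, and $\psi(\theta)=\theta$ since $\psi$ fixes $K_1\ni\theta$; hence the line $\bF_3\cdot[\theta]\subset K^{(3)\times}/(K^{(3)\times})^3$ is $\Gal(K^{(3)}/k)$-stable and $R^{(3)}=K^{(3)}(\sqrt[3]{\theta})$ is Galois over $k$.

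Next I would show $[R^{(3)}:K^{(3)}]=3$, equivalently (by Kummer theory over $K_1$, since $K^{(3)}=K_1(\sqrt[3]{p_2})$) that none of $\theta$, $p_2^{-1}\theta$, $p_2^{-2}\theta$ lies in $(K_1^\times)^3$. The residue condition $(p_1/p_2)_3=1$ in (\ref{pc3}) means $\p_2$ splits completely in $K_1$, say $\p_2\mathcal{O}_{K_1}=\mathfrak{P}_0\mathfrak{P}_1\mathfrak{P}_2$ with $\mathfrak{P}_j=\phi^j(\mathfrak{P}_0)$; comparing $\p_2$-valuations in (\ref{norm}) forces the prime $\mathfrak{P}$ with $(\alpha)=\mathfrak{P}^{e}\mathfrak{B}^{f}$, $3\nmid e$, $3\mid f$, to lie over $\p_2$, so after relabelling $(\alpha)=\mathfrak{P}_0^{e}\mathfrak{B}^{f}$. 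Applying $\phi$ to the definition of $\theta$ gives $(\theta)\equiv\mathfrak{P}_1^{e}\mathfrak{P}_2^{2e}\pmod{(\text{fractional ideals})^3}$, and since $\mathfrak{P}_0,\mathfrak{P}_1,\mathfrak{P}_2$ are distinct and $3\nmid e$, none of $(\theta)$, $(p_2^{-1}\theta)\equiv\mathfrak{P}_0^{-1}\mathfrak{P}_1^{e-1}\mathfrak{P}_2^{2e-1}$, $(p_2^{-2}\theta)\equiv\mathfrak{P}_0^{-2}\mathfrak{P}_1^{e-2}\mathfrak{P}_2^{2e-2}$ is the cube of a fractional ideal. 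Hence $\sqrt[3]{\theta}\notin K^{(3)}$, so $[R^{(3)}:K^{(3)}]=3$ and $[R^{(3)}:k]=27$.

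For (i) and (ii), let $\tau\in\Gal(R^{(3)}/K^{(3)})$ with $\tau(\sqrt[3]{\theta})=\zeta_3\sqrt[3]{\theta}$, and fix lifts $\tilde\phi,\tilde\psi$ of $\phi,\psi$ with $\tilde\psi(\sqrt[3]{\theta})=\sqrt[3]{\theta}$ and $\tilde\phi(\sqrt[3]{\theta})=\zeta_3^{a}\bigl(\alpha/(w\sqrt[3]{p_2})\bigr)\sqrt[3]{\theta}$. Because $\psi(\sqrt[3]{p_2})=\zeta_3\sqrt[3]{p_2}$ while $\psi$ fixes $\alpha$ and $w$, one gets $\tilde\phi\tilde\psi(\sqrt[3]{\theta})=\zeta_3\cdot\tilde\psi\tilde\phi(\sqrt[3]{\theta})$, so $\tilde\phi\tilde\psi=\tau\,\tilde\psi\tilde\phi$ with $\tau\neq1$ and $\Gal(R^{(3)}/k)$ is non-abelian. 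Iterating the formula for $\tilde\phi$, using $\phi(w)=w$ and (\ref{norm}), gives $\tilde\phi^3(\sqrt[3]{\theta})=\bigl(N_{K_1/k}(\alpha)/(w^3p_2)\bigr)\sqrt[3]{\theta}=\sqrt[3]{\theta}$, hence $\tilde\phi^3=1$; and $\tilde\psi^3=1$ is clear. Thus every element of $\Gal(R^{(3)}/k)$ has order dividing $3$, and a non-abelian group of order $27$ of exponent $3$ is $H_3(\bZ/3\bZ)$, proving (i). For (ii): the computation above shows that over $K^{(3)}$ the ideal $(\theta)$ is supported modulo cubes only at primes above $\p_2$, and these are cubes of primes in $K^{(3)}$ (as $\p_2$ is totally ramified in $K^{(3)}/K_1$), so $(\theta)$ is the cube of a fractional ideal of $K^{(3)}$; combined with the primarity of $\alpha$ at $\mathfrak{l}$ (which propagates to $\theta$), this makes $R^{(3)}/K^{(3)}$ unramified at every finite prime. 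Consequently the only primes of $k$ ramifying in $R^{(3)}$ are $\p_1,\p_2$, and there the ramification is tame (residue characteristic $\neq 3$), so the inertia group is cyclic, of order dividing $3$ (the exponent of the Galois group) and at least $3$ (as $\p_i$ already ramifies in $K^{(3)}/k$); the index is therefore $3$.

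Finally, for (iii): two admissible choices of $\alpha$ give fields $R,R'$ both satisfying (i)--(ii), both cyclic cubic and unramified over $K^{(3)}$ and Galois over $k$ with Heisenberg group, and both containing $K^{(3)}$. Following [AMM;~Corollary~5.12], the plan is to repackage (i)--(ii) as an arithmetic characterisation of $R^{(3)}$ analogous to Theorem~\ref{charp2}, and then to invoke class field theory: cyclic cubic everywhere-unramified extensions of $K^{(3)}$ that are Galois over $k$ correspond, via Kummer theory, to the $\Gal(K^{(3)}/k)$-fixed part of $\mathrm{Cl}(K^{(3)})/3\,\mathrm{Cl}(K^{(3)})$, and the cubic residue and congruence conditions (\ref{pc3}), (\ref{pcc3}), (\ref{assume3}) are exactly what forces this $\bF_3$-space to be one-dimensional; hence there is a unique such nontrivial extension, so $R=R'$ and $R^{(3)}$ depends only on $\{\p_1,\p_2\}$. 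The two genuine obstacles are this last uniqueness count and the control of ramification at $\mathfrak{l}=(\sqrt{-3})$ --- the primarity of $\alpha$, and hence of $\theta$, which is where (\ref{pcc3}) and the assumption (\ref{assume3}) really enter via the construction of $\alpha$ in [AMM;~Proposition~5.6]; the rest is the elementary conjugation bookkeeping indicated above.
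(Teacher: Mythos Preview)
The paper does not prove this statement at all: Theorem~\ref{lemp3} is quoted verbatim from the external reference [AMM;~Theorem~5.11, Corollary~5.12], with no argument supplied in the present paper. So there is no ``paper's own proof'' to compare against; your proposal is a self-contained sketch where the paper simply cites.

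On the merits, your outline for (i) is correct and efficient: the two identities
\[
\theta\,\phi(\theta)\,\phi^2(\theta)=(p_2w^3)^3,\qquad
\phi(\theta)/\theta=\bigl(\alpha/(w\sqrt[3]{p_2})\bigr)^{3}
\]
do exactly the work you claim --- Galois descent of $R^{(3)}$ to $k$, nontriviality of the commutator $[\tilde\phi,\tilde\psi]$, and $\tilde\phi^{\,3}=\tilde\psi^{\,3}=1$, whence the non-abelian exponent-$3$ group of order $27$ is forced to be $H_3(\bZ/3\bZ)$. Your ideal bookkeeping for $[R^{(3)}:K^{(3)}]=3$ and for the cube-ness of $(\theta)\mathcal{O}_{K^{(3)}}$ is also sound. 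You have correctly isolated the two places where real input beyond conjugation is needed: unramifiedness of $R^{(3)}/K^{(3)}$ at the primes above $\mathfrak l=(\sqrt{-3})$ (this is where the primary normalisation of $\alpha$ from [AMM;~Proposition~5.6], hence (\ref{pcc3}) and (\ref{assume3}), is actually used), and the uniqueness in (iii). For the latter, your description ``the $\Gal(K^{(3)}/k)$-fixed part of $\mathrm{Cl}(K^{(3)})/3$'' is not quite the right object --- one needs $\Gal(K^{(3)}/k)$-stable index-$3$ subgroups giving a \emph{non-abelian} extension of $k$, which is a slightly different $\bF_3[\Gal(K^{(3)}/k)]$-module computation --- but you flag this as an obstacle yourself, and it is indeed the substance of [AMM;~Corollary~5.12]. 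In short: your sketch is a faithful reconstruction of how such a proof goes; the paper under review offers nothing to compare it to.
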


\begin{thm}[An arithmetic characterization 
of $R^{(3)}${\rm ;~}${\rm cf.~[AMM; Corollary~5.12]}$]\label{charp3}
Let $\p_1=(p_1),
\p_2=(p_2)$ be distinct prime ideals of $k$
satisfying (\ref{pc3}),
(\ref{assume3}),
(\ref{pcc3}) and (\ref{p_33}).
For a finite extension field $L$ of $k$ in $\bC$, the following conditions are
equivalent:\\
\begin{inparaenum}[(1)]
 \item $L$ is the field  $R^{(3)}$; \\
 \item $L/k$ is a Galois extension 
in which only primes $\p_1,
\p_2$ are ramified with ramification index $3$ 
and whose Galois group is isomorphic to
the Heisenberg group
$H_3(\bZ/3\bZ)$.
\end{inparaenum}
\end{thm}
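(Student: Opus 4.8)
The plan is to prove the equivalence $(1)\Leftrightarrow(2)$ by treating the two implications separately; the implication $(1)\Rightarrow(2)$ is merely a restatement of parts (i) and (ii) of Theorem~\ref{lemp3}, so the content lies in $(2)\Rightarrow(1)$, which I would organize in three steps.

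\emph{Step 1: $K^{(3)}\subseteq L$.} Let $L/k$ be Galois, ramified only at $\p_1,\p_2$ with ramification index $3$, and with $\Gal(L/k)\cong H:=H_3(\bZ/3\bZ)$. Since the commutator subgroup of $H$ coincides with its centre $Z(H)$, a group of order $3$, the maximal abelian subextension $L^{\rm ab}/k$ satisfies $\Gal(L^{\rm ab}/k)\cong H/Z(H)\cong(\bZ/3\bZ)^2$ and $\Gal(L/L^{\rm ab})=Z(H)$; thus $L^{\rm ab}/k$ is abelian of exponent $3$ and degree $9$, ramified only at $\p_1,\p_2$ and unramified at the prime $\lambda:=(\sqrt{-3})$ above $3$. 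Because $\zeta_3\in k$, Kummer theory identifies such extensions with the subgroups of the group $V\subseteq k^\times/(k^\times)^3$ consisting of classes of elements $a$ with $v_{\mathfrak{q}}(a)\equiv 0\bmod 3$ for every prime $\mathfrak{q}\notin\{\p_1,\p_2\}$ and with $k(\sqrt[3]{a})/k$ unramified at $\lambda$. Since the class number of $k$ equals $1$ and every primary unit of $k$ is a cube in $k$ (for instance $\zeta_3$ is not primary, as $k(\sqrt[3]{\zeta_3})=k(\zeta_9)$ ramifies at $\lambda$), the group $V$ is generated by the classes of the prime elements $p_1,p_2$ — which lie in $V$ precisely because of the normalization $p_i\equiv 1\bmod(3\sqrt{-3})$, and which are independent modulo cubes. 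Hence $|V|=9$, the only abelian extension of $k$ of degree $9$ with the prescribed ramification is $K^{(3)}=k(\sqrt[3]{p_1},\sqrt[3]{p_2})$, so $L^{\rm ab}=K^{(3)}$ and in particular $K^{(3)}\subseteq L$.

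\emph{Step 2: $L/K^{(3)}$ is everywhere unramified.} It is cyclic of degree $3$ because $\Gal(L/K^{(3)})=Z(H)$. It is unramified at the archimedean places since $K^{(3)}\subset\bC$ is totally imaginary; unramified at the finite primes lying over neither $\p_1$ nor $\p_2$ (in particular at $\lambda$) because $L/k$ is; and unramified above $\p_i$ because the ramification index of $\p_i$ in $L/k$ is $3$ while it is already $3$ in $K^{(3)}/k$ — the normalization $p_i\equiv 1\bmod(3\sqrt{-3})$ makes $k(\sqrt[3]{p_i})/k$ tamely and totally ramified at $\p_i$ with $\lambda$ unramified, and $\p_i$ ramifies no further in $K^{(3)}/k(\sqrt[3]{p_i})$. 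Consequently $L$ is contained in the Hilbert class field of $K^{(3)}$, so $\Gal(L/K^{(3)})\cong\bZ/3\bZ$ is a quotient of the ideal class group ${\rm Cl}(K^{(3)})$; and since $Z(H)$ is central in $H$, the group $G:=\Gal(K^{(3)}/k)\cong(\bZ/3\bZ)^2$ acts trivially on that quotient.

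\emph{Step 3: uniqueness, and the main obstacle.} By Theorem~\ref{lemp3} the field $R^{(3)}=K^{(3)}(\sqrt[3]{\theta^{(3)}_{p_1,p_2}})$ is itself an everywhere unramified cyclic cubic extension of $K^{(3)}$ on which $G$ acts trivially, so it suffices to show that such an extension is unique. As $G$-equivariant surjections ${\rm Cl}(K^{(3)})\to\bZ/3\bZ$ factor through the $G$-coinvariants and two such define the same subfield, this reduces to showing that $({\rm Cl}(K^{(3)})/3)_{G}\cong\bZ/3\bZ$. I expect this genus-theoretic identity — rather than the formal reductions of Steps~1--2 — to be the main obstacle; it is precisely here that the hypotheses ${\rm N}\p_i\equiv 1\bmod 9$ and $(p_i/p_j)_3=1$ are used in an essential way, in parallel with the quadratic case treated in [Am;~Theorem~2.1] and following the computations of [AMM]. (A possible ``split'' alternative is harmless: if $L=K^{(3)}\cdot E$ for a cyclic cubic $E/k$ with $E\not\subseteq K^{(3)}$, then $E/k$ would be ramified only at $\p_1,\p_2$ and unramified at $\lambda$, forcing $E\subseteq K^{(3)}$ by Step~1.) Granting the identity, $L$ equals this unique extension, namely $R^{(3)}$, which completes $(2)\Rightarrow(1)$.
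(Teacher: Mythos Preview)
The paper does not prove this statement: it is recorded with the attribution ``cf.\ [AMM; Corollary~5.12]'' and no argument is given in the text, so there is no in-paper proof to compare your proposal against.

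On its own merits your outline is the natural one and presumably close in spirit to [AMM]. The implication $(1)\Rightarrow(2)$ is indeed just Theorem~\ref{lemp3}(i)--(ii). For $(2)\Rightarrow(1)$ your Steps~1 and~2 are correct and cleanly executed: the identification $L^{\rm ab}=K^{(3)}$ via Kummer theory (using $h_k=1$ together with the normalization $p_i\equiv 1\bmod(3\sqrt{-3})$ to pin down ramification at $\lambda$), and the passage to an everywhere unramified central $\bZ/3\bZ$-extension of $K^{(3)}$, are exactly the right mechanisms. The genuine gap is precisely where you flag it: you \emph{grant} the genus-theoretic identity $({\rm Cl}(K^{(3)})/3)_G\cong\bZ/3\bZ$ rather than prove it, and without that computation the argument is not complete --- this is the substance of the result, not the formal reductions. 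One small remark on your ``split alternative'' paragraph: it rules out $\Gal(L/k)\cong(\bZ/3\bZ)^3$ but not the other abelian central extension $\bZ/9\bZ\times\bZ/3\bZ$ (which is also impossible: it would contain a cyclic degree-$9$ extension of $k$ with all inertia groups of order dividing $3$, hence an everywhere unramified cyclic cubic subextension of $k$, contradicting $h_k=1$). In any case that paragraph is unnecessary: once uniqueness of the central unramified cubic over $K^{(3)}$ is known and Theorem~\ref{lemp3} tells you $R^{(3)}$ is one such, $L=R^{(3)}$ follows without any separate case analysis.
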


Here
we take another prime ideal 
$\p_3=(p_3)$ of $k$ satisfying
\begin{equation}\label{p_33}
{\rm N}\p_3  \equiv 1~{\rm mod}~9,~~
\left(\dfrac{p_i}{p_j}\right)_3
=1~~(1 \leq i \neq j \leq 3).
\end{equation}
Note that $\p_3$ 
is unramified in $R^{(3)}/k$ 
by Theorem~\ref{lemp3}~(ii).
Then
we introduce an arithmetic symbol 
which controls
the decomposition of $\p_3$ 
in the nilpotent extension $R^{(3)}/k$.

\begin{definition}[Triple cubic residue symbol
{\rm ;~}${\rm [AMM;~Definition~6.2]}$]\label{s3}
For a triple of primes $(\p_1,\p_2,\p_3)$ 
of $k$ satisfying (\ref{pc3}) and (\ref{p_33}),
the {\it triple cubic residue symbol} 
is defined by 
\[[\p_1,\p_2,\p_3]_3
:=\zeta_3^{\mu_3(123)} 
\in \{1,\zeta_3,\zeta_3^{-1}\},\]
where 
$\mu_3(123) \in \bZ/3\bZ$ is 
the mod 3 Milnor invariant 
for the primes $\p_1,\p_2,\p_3$.
See [AMM;~(2.3) of Chapter~2, Theorem~4.4] 
for the detailed account 
of $\mu_3(123)$.
\end{definition}

Let $\tilde{\p}_i$ be a prime ideal 
of $K^{(3)}$ above $\p_i$.
By (\ref{pc3}) and (\ref{p_33}),
the primes $\p_1,
\p_2,
\p_3$ are completely decomposed or
ramified in $K^{(3)}/k$ as follows.

\vspace{0.5cm}
\begin{center}
\begin{tikzpicture}
\draw (0,2.5) -- (0,3.5);
\draw (0,4.5) -- (0,5.5);
\draw (0,6.5) -- (0,7.5);
\draw (4,2.5) -- (4,3.5);
\draw (4,4.5) -- (4,5.5);
\draw (7.5,2.5) -- (7.5,3.5);
\draw (7.5,4.5) -- (7.5,5.5);
\draw (11,2.5) -- (11,5.5);
\node at (0,2) {$k$};
\node at (0.35,1.95) {$=$};
\node at (1.1, 1.98) {$\bQ(\zeta_3)$};
\node at (0,4) {$K_1$};
\node at (0.57,3.95) {$=$};
\node at (1.5,4) {$k(\sqrt[3]{p_1})$};
\node at (0,6) {$K^{(2)}$};
\node at (0.57,5.9) {$=$};
\node at (1.9,6) {$k(\sqrt[3]{p_1}, \sqrt[3]{p_2})$};
\node at (0, 8) {$R^{(3)}$};
\node at (0.57, 7.9) {$=$};
\node at (1.9, 8) {$K^{(3)}(\sqrt[3]{\theta_{p_1, p_2}^{(3)}})$};
\node at (4,2) {$\p_1$};
\node at (4,4) {$\tilde{\p}_1 \cap K_1$};
\node at (4,6) {$\tilde{\p}_1$};
\node at (5,3) {ramified};
\node at (5.17,5.18) {completely};
\node at (5.28,4.82) {decomposed};
\node at (7.5,2) {$\p_2$};
\node at (7.5,4) {$\tilde{\p}_2 \cap K_1$};
\node at (7.5,6) {$\tilde{\p}_2$};
\node at (8.67,3.18) {completely};
\node at (8.78,2.82) {decomposed};
\node at (8.5,5) {ramified};
\node at (11,2) {$\p_3$};
\node at (11,6) {$\tilde{\p}_3$};
\node at (12.17,4.18) {completely};
\node at (12.28,3.82) {decomposed};
\end{tikzpicture}
\end{center}
\vspace{0.5cm}

\begin{thm}[{\rm{[AMM;~Theorem.6.3]}}]\label{222}
Let
$\sigma_{\p_3}:={\rm Frob}_{\tilde{\p}_3} 
\in {\rm Gal}(R^{(3)}/K^{(3)})$ 
be the Frobenius substitution 
of $\tilde{\p}_3$ in $R^{(3)}/K^{(3)}$.
Then we have
\[[\p_1,\p_2,\p_3]_3
=\dfrac{\sigma_{\p_3}(\sqrt[3]{\theta^{(3)}_{p_1,p_2}})}
{\sqrt[3]{\theta^{(3)}_{p_1,p_2}}}.\]
In particular,
$[\p_1,\p_2,\p_3]_3=1$ 
if and only if $\p_3$ is 
completely decomposed in $R^{(3)}/k$.
\end{thm}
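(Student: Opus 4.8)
\emph{Idea of proof (the statement is {\rm[AMM; Theorem~6.3]}; I sketch the route).}
The plan is to transport the group-theoretic definition of the mod $3$ Milnor invariant $\mu_3(123)$ into the explicit Kummer description $R^{(3)}=K^{(3)}(\sqrt[3]{\theta^{(3)}_{p_1,p_2}})$, running parallel to the quadratic case (Theorem~\ref{22}); the genuinely new features are that the base field is the Eisenstein field $k=\bQ(\zeta_3)$ and that the Kummer generator $\theta^{(3)}_{p_1,p_2}=\phi(\alpha_{p_1,p_2})\phi^2(\alpha_{p_1,p_2})^2$ is produced by a $\phi$-twisted construction over $K_1=k(\sqrt[3]{p_1})$.

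First I would unwind the definition of $\mu_3(123)$. Let $G$ be the Galois group of the maximal pro-$3$ extension of $k$ unramified outside $\{\p_1,\p_2\}$, let $x_1,x_2\in G$ be monodromy generators at $\p_1,\p_2$, and consider the Heisenberg quotient $G\twoheadrightarrow H_3(\bZ/3\bZ)$ sending $x_1,x_2$ to the standard generators and $c:=[x_1,x_2]$ to a generator of the center. By [AMM;~Chapter~2,~Theorem~4.4], $\mu_3(123)\in\bZ/3\bZ$ amounts to the exponent for which the image of ${\rm Frob}_{\tilde{\p}_3}$ in $H_3(\bZ/3\bZ)$ is $c^{\mu_3(123)}$; the conditions $\left(\tfrac{p_i}{p_j}\right)_3=1$ in (\ref{pc3}) and (\ref{p_33}) are exactly the vanishing of the mod $3$ linking numbers, which forces that image to be central and makes $\mu_3(123)$ well defined.

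Next I would identify $R^{(3)}/k$ with this Heisenberg extension. By Theorem~\ref{lemp3} together with the arithmetic characterization Theorem~\ref{charp3}, $R^{(3)}/k$ is the \emph{unique} Galois extension ramified only at $\p_1,\p_2$, with ramification index $3$, and with group $H_3(\bZ/3\bZ)$; hence it is the fixed field of $\ker(G\twoheadrightarrow H_3(\bZ/3\bZ))$, for an isomorphism $\Gal(R^{(3)}/k)\cong H_3(\bZ/3\bZ)$ carrying inertia at $\p_i$ to $x_i$ and $\Gal(R^{(3)}/K^{(3)})$ to the center $\langle c\rangle$. Since $\p_3$ is completely decomposed in $K^{(3)}/k$ by (\ref{pc3}) and (\ref{p_33}) (see the diagram above), ${\rm Frob}_{\tilde{\p}_3}$ fixes $K^{(3)}$, so $\sigma_{\p_3}={\rm Frob}_{\tilde{\p}_3}$ lies in $\Gal(R^{(3)}/K^{(3)})=\langle c\rangle$, and by the previous step $\sigma_{\p_3}=c^{\mu_3(123)}$.

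Finally I would match the two parametrizations of this order-$3$ cyclic group: the group-theoretic one $c\mapsto\zeta_3$ (so that $\sigma_{\p_3}\mapsto\zeta_3^{\mu_3(123)}=[\p_1,\p_2,\p_3]_3$) and the Kummer one $\sigma\mapsto\sigma(\sqrt[3]{\theta^{(3)}_{p_1,p_2}})/\sqrt[3]{\theta^{(3)}_{p_1,p_2}}$ attached to $R^{(3)}=K^{(3)}(\sqrt[3]{\theta^{(3)}_{p_1,p_2}})$; concretely one must verify that $c$ acts on $\sqrt[3]{\theta^{(3)}_{p_1,p_2}}$ by $\zeta_3$ and not by $\zeta_3^{2}$. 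Here the precise shape of $\theta^{(3)}_{p_1,p_2}$ and the norm relation $N_{K_1/k}(\alpha_{p_1,p_2})=p_2w^3$ enter: they are what make $K^{(3)}(\sqrt[3]{\theta^{(3)}_{p_1,p_2}})/k$ Galois and single out the extra $\bZ/3\bZ=\Gal(R^{(3)}/K^{(3)})$ as the commutator part of $H_3(\bZ/3\bZ)$, and then a direct $1$-cocycle computation --- equivalently, a comparison of how auxiliary primes split through the tower $k\subset K_1\subset K^{(3)}\subset R^{(3)}$ --- pins the exponent to $1$. I expect this alignment, getting the explicit cubic Kummer generator to match the group-theoretic commutator with the correct exponent while tracking the $\phi$-twist over $k=\bQ(\zeta_3)$, to be the main obstacle; the rest is formal. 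Granting it, $[\p_1,\p_2,\p_3]_3=\zeta_3^{\mu_3(123)}=\sigma_{\p_3}(\sqrt[3]{\theta^{(3)}_{p_1,p_2}})/\sqrt[3]{\theta^{(3)}_{p_1,p_2}}$. For the last assertion, $[\p_1,\p_2,\p_3]_3=1$ iff $\mu_3(123)=0$ iff $\sigma_{\p_3}$ acts trivially on $R^{(3)}$; and since $\p_3$ is unramified in $R^{(3)}/k$ and completely decomposed in $K^{(3)}/k$, the decomposition group of $\tilde{\p}_3$ in $\Gal(R^{(3)}/k)$ is generated by $\sigma_{\p_3}$, so $[\p_1,\p_2,\p_3]_3=1$ is equivalent to $\p_3$ being completely decomposed in $R^{(3)}/k$.
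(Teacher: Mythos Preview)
The paper does not prove this theorem; it is quoted as background from {\rm[AMM;~Theorem~6.3]} and used as a black box in the proofs of Theorem~\ref{tl} and Theorem~\ref{aaa}. So there is no proof in the paper to compare your sketch against.

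That said, your outline is a reasonable reconstruction of the argument one expects in {\rm[AMM]}: identify $R^{(3)}/k$ with the Heisenberg quotient via the uniqueness statement (Theorem~\ref{charp3}), place $\sigma_{\p_3}$ in the center using the complete splitting in $K^{(3)}/k$, and then match the group-theoretic parametrization of the center with the Kummer parametrization coming from $\sqrt[3]{\theta^{(3)}_{p_1,p_2}}$. You correctly flag the only nontrivial point, namely that the commutator $c=[x_1,x_2]$ acts on $\sqrt[3]{\theta^{(3)}_{p_1,p_2}}$ by $\zeta_3$ rather than $\zeta_3^{2}$; this is where the specific choice (\ref{oth3}) of $\theta^{(3)}_{p_1,p_2}$ and the normalization of $x_1,x_2$ via inertia must be used, and in your sketch this step is acknowledged but not carried out. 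Everything else is formal, as you say.
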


\begin{thm}[a reciprocity law of triple cubic residue symbols{\rm ;~}
${\rm [AMM;~Proposition~6.5]}$]\label{recip3}
We have
\[[\p_1,\p_2,\p_3]_3 
\cdot [\p_2,\p_1,\p_3]_3=1,\]
that is $[\p_2,\p_1,\p_3]_3 
= [\p_1,\p_2,\p_3]_3^{-1}$.
\end{thm}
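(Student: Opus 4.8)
The statement to prove is Theorem \ref{recip3}, the reciprocity law $[\p_1,\p_2,\p_3]_3 \cdot [\p_2,\p_1,\p_3]_3 = 1$.

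The plan is to deduce this from the machinery assembled in the paper, following exactly the strategy announced in the Introduction: obtain the reciprocity law as a consequence of the Nakamura--Wojtkowiak functional equation (Theorem \ref{func}) via the main formula (\ref{formula}). Concretely, first I would choose, for the given triple of primes $\{\p_1,\p_2,\p_3\}$ satisfying (\ref{pc3}), (\ref{assume3}), (\ref{pcc3}), (\ref{p_33}), a suitable base field $K$, a point $z \in X(K)$, a path $\gamma$, and an element $\sigma \in G_K$ so that the hypotheses of the main formula hold; the natural candidates are $K = K^{(3)} = k(\sqrt[3]{p_1},\sqrt[3]{p_2})$ (or a completion/decomposition field thereof), $z$ built from $p_1, p_2$ so that the relevant Kummer cocycles $\rho_{z,\gamma}$, $\rho_{1-z,\gamma'}$ encode $\sqrt[3]{p_1}$, $\sqrt[3]{p_2}$, and $\sigma = \sigma_{\p_3} = \mathrm{Frob}_{\tilde{\p}_3}$ the Frobenius appearing in Theorem \ref{222}. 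With this choice the main formula gives $[\p_1,\p_2,\p_3]_3 = \zeta_3^{-\ell i_2^{(3)}(z,\gamma)(\sigma)}$ up to a controlled sign/root-of-unity factor.

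The second step is to run the symmetry $\p_1 \leftrightarrow \p_2$ through the same apparatus. Swapping the two primes corresponds, on the geometric side, to the involution $z \mapsto 1-z$ together with the replacement $\gamma \rightsquigarrow \gamma' = \delta\cdot\varphi(\gamma)$, since $\varphi(*) = 1-*$ interchanges the roles of $0$ and $1$ and hence of the two Kummer characters $\rho_{z,\gamma}$ and $\rho_{1-z,\gamma'}$. So the main formula applied to the swapped data yields $[\p_2,\p_1,\p_3]_3 = \zeta_3^{-\ell i_2^{(3)}(1-z,\gamma')(\sigma)}$ up to the analogous factor. Multiplying the two expressions, I get
\[
[\p_1,\p_2,\p_3]_3 \cdot [\p_2,\p_1,\p_3]_3 = \zeta_3^{-(\ell i_2^{(3)}(z,\gamma)(\sigma) + \ell i_2^{(3)}(1-z,\gamma')(\sigma))} \cdot (\text{correction}).
\]
Now Theorem \ref{func} identifies $\ell i_2^{(3)}(z,\gamma)(\sigma) + \ell i_2^{(3)}(1-z,\gamma')(\sigma) = \ell i_2^{(3)}(\overrightarrow{10},\delta)(\sigma)$, a quantity independent of $z$; equivalently the second displayed equation in Theorem \ref{func} evaluates the sum of the polylogarithmic characters in terms of $\tfrac{1}{24}(\chi(\sigma)^2-1)$ and the product $\rho_{z,\gamma}(\sigma)\rho_{1-z,\gamma'}(\sigma)$. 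The final step is to check that, for the specific $\sigma = \sigma_{\p_3}$ chosen, this "universal" right-hand side contributes trivially mod $3$ — i.e. $\zeta_3$ raised to it is $1$ — and that the product of the two correction factors is likewise trivial; then the product of triple symbols is $1$, which is the claim (and the equivalent reformulation $[\p_2,\p_1,\p_3]_3 = [\p_1,\p_2,\p_3]_3^{-1}$ follows since $\zeta_3$ has order $3$).

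The main obstacle I anticipate is bookkeeping the correction factors and verifying the vanishing in the last step: one must pin down precisely the sign/root-of-unity discrepancy in (\ref{formula}) in terms of $\rho_{z,\gamma}(\sigma)$, $\rho_{1-z,\gamma'}(\sigma)$ and $\chi(\sigma)$, show that under $\p_1 \leftrightarrow \p_2$ this factor behaves compatibly so that the two copies cancel (or combine with the $\tfrac{1}{24}(\chi(\sigma)^2-1)$ term to give something $\equiv 0 \bmod 3$), and confirm that $\ell i_2^{(3)}(\overrightarrow{10},\delta)(\sigma_{\p_3})$ — which involves $\tilde{\chi}_2$ at the "tangential" point — is divisible by $3$ for this Frobenius. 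This is where the hypotheses ${\rm N}\p_i \equiv 1 \bmod 9$ and $p_i \equiv 1 \bmod 3\sqrt{-3}$ should be used: they force the relevant $\rho$'s and $\tilde\chi$'s to lie in $3\bZ_3$ (or at least force the combination to), so that everything collapses modulo $3$. Once that numerical fact is in hand, the reciprocity law drops out immediately from the functional equation.
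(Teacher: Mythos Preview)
Your proposal is correct and coincides with the paper's approach. Note that Theorem~\ref{recip3} itself is quoted from [AMM] without proof in the preliminaries; the paper's own derivation is Corollary~\ref{reci} (which, like your outline, needs Assumption~(A) to produce the point $z$), and its proof is exactly what you sketch: combine Theorems~\ref{tl} and~\ref{aaa} to express both symbols via $\tilde\chi_2^{z}$ and $\tilde\chi_2^{1-z}$, apply the functional equation of Theorem~\ref{func}, and use $\sqrt[\ell]{p_1},\sqrt[\ell]{p_2},\zeta_\ell\in K$ to see that the residual exponent $-\rho_{z,\gamma}\rho_{1-z,\gamma'}+\tfrac{1}{24}(\chi^2-1)$ vanishes modulo~$\ell$ --- in particular, for $\ell=3$ the ``correction factor'' you worry about is just $\tilde\sigma_{\p_3}(x)/x=1$ since $x\in k\subset K$, so no extra bookkeeping is required.
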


\vspace{3mm}

Hereafter,
following Hirano-Morishita [HM,~Section~4.2],
we shall restrict ourselves 
to 
the case with 
\vspace{3mm}
\begin{center}
\underline{Assumption~(A)}~:
~~The pair
$(p_1,p_2)$
has
$\alpha_{p_1,p_2} \in {\mathcal O}_{K_1}$ 
in (\ref{alpha}) and (\ref{norm}) which
can be given in the form
$\alpha_{p_1,p_2}=x+y\sqrt[3]{p_1}
~\qquad~(x,y \in k).$\\
\end{center}
\vspace{3mm}
Under the above assumption~(A),
the conditions (\ref{norm}), (\ref{oth3})
are equivalent to
\begin{equation}\label{xyw3}
x^3+p_1 y^3=p_2 w^3,
\end{equation}
\begin{equation}\label{th3}
\theta^{(3)}_{p_1,p_2} =(x+\zeta_3 y \sqrt[3]{p_1})
(x+\zeta_3^2 y \sqrt[3]{p_1})^2.
\end{equation}
These equations will play an important role in the next section.


\section{Triple $\ell$-th power residue symbols 
and $\ell$-adic Galois polylogarithms}

In this section,
we interpret 
triple $\ell$-th power residue symbols 
in terms of $\ell$-adic 
Galois polylogarithms for $\ell = 2,
3$.
As a result,
we derive a reciprocity law of 
triple $\ell$-th power residue symbols 
from a functinal equation 
of $\ell$-adic Galois polylogarithms.

\subsection{\bf \boldmath Main formula}

Let $\ell \in \{2,
3\}$.
Let $k := 
\left\{
\begin{array}{ll}
    \mathbb{Q} & ({\rm if}~\ell=2),\\
   \mathbb{Q}(\zeta_3) & ({\rm if}~\ell=3)
  \end{array}
  \right.$ and 
\begin{equation}\label{Kl}
K :=
\bQ(\zeta_\ell)
(\sqrt[\ell]{p_1}, \sqrt[\ell]{p_2}),
\end{equation}
where $\zeta_{\ell}
=\exp(\frac{2\pi\sqrt{-1}}{\ell})$
is a fixed primitive $\ell$-th root of unity in $\overline{K} \subset \bC$.
We set
\[p_i \in \bZ[\zeta_\ell]
~\qquad~(i=1,2,3),
~\qquad~x, y, w \in k,\]
\[\theta^{(\ell)}_{p_1,p_2}, R^{(\ell)}_{p_1,p_2}, K^{(\ell)}_{p_1,p_2}\]
as in Section~\ref{2} for $\ell=2$ and 
as in Section~\ref{3} 
with the assumption (A) for $\ell=3$.
Note that $K=K^{(\ell)}_{p_1,p_2}$ by (\ref{K2}) and (\ref{K3}).

Hence,
by (\ref{th2}), (\ref{th3}) 
and (\ref{xyw2}), (\ref{xyw3}),
we have
\begin{equation}\label{xywl}
x^{\ell}-(-y)^{\ell} p_1 = w^\ell p_2,
\end{equation}
\begin{equation}\label{thl}
\theta^{(\ell)}_{p_1,p_2}=\prod_{i=0}^{\ell -1}
(x+\zeta_{\ell}^{i}y\sqrt[\ell]{p_1})^{i}.
\end{equation}
For the prime element 
$p_i \in \bZ[\zeta_\ell]~(i=1,2,3)$, 
we denote by
\[\p_i = (p_i)\]
the prime ideal 
of $k$ generated by $p_i$.
For the triple 
$(\p_1, \p_2, \p_3)$ of primes of $k$,
the triple $\ell$-th power 
residue symbol $[\p_1, \p_2, \p_3]_\ell$ 
is defined
as discussed 
in Section~\ref{triplesymbol}.

Moreover,
we choose
\begin{equation}\label{rz}
z:=p_1\left(-\dfrac{y}{x}\right)^\ell.
\end{equation}
Since $z \in K \backslash \{0,1\}$,
we regard $z$ 
as a $K$-rational point 
of ${\mathbb P}_K
^1\backslash \{0,1,\infty\}$.
Let
\begin{equation}\label{sigma}
\tilde{\sigma}_{\p_3} \in \Gal( \overline{K}/K)
\end{equation}
be an extension 
of the Frobenius substitution $\sigma_{\p_3} := 
{\rm Frob}_{{\tilde{\p}}_3} \in \Gal(R_{p_1,p_2}^{(\ell)}/K)$ 
where $\tilde{\p}_3$ is a prime ideal 
of $K$ above $\p_3$.
Let $\bar{z}: \Spec \overline{K} 
\to {\mathbb P}^1_{\overline{K}}
\backslash \{0,1,\infty\}$ be 
the base change of $z$ 
via $\Spec \overline{K} \to \Spec K$.
Fix a homotopy class 
\[\gamma \in 
\pi_1^{\top}
({\mathbb P}^1(\bC)
\backslash \{0,1,\infty\}; \overrightarrow{01}, z)\]
of a piece-wise smooth topological 
path on ${\mathbb P}^1(\bC)
\backslash \{0,1,\infty\}$ from $\overrightarrow{01}$ to $z$.
Then,
the 2nd 
$\ell$-adic Galois polylogarithms
$\ell i_{2}^{(\ell)}(z,\gamma): G_K \to \bQ_\ell$,
$\tilde{\chi}_2^{z, \gamma}: G_K \to \bZ_\ell$
are defined
as discussed 
in Section~\ref{lgp}.


\begin{prop}\label{depn}
Let the notations 
and assumptions be as above.
For any $\tau \in G_K$,
the value
$\ell i_{2}^{(\ell)}(z,\gamma)
(\tau)~{\rm mod}~\ell$,
together with
${\tilde{\chi}_{2}
^{z, \gamma}(\tau)}~{\rm mod}~\ell$,
is independent 
of the choice of $\gamma$.
\end{prop}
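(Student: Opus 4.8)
The plan is to trace how the pair of functions $\ell i_2^{(\ell)}(z,\gamma)$ and $\tilde\chi_2^{z,\gamma}$ depends on the choice of topological path $\gamma$ and show that any two choices differ by a quantity that lies in $\ell\bZ_\ell$. First I would recall that the set $\pi_1^{\top}(X^{\an};\overrightarrow{01},z)$ is a torsor under $\pi_1^{\top}(X^{\an},\overrightarrow{01}) = \langle l_0, l_1\rangle$; hence any other homotopy class is of the form $\gamma_0 = \kappa\cdot\gamma$ for a unique $\kappa\in\pi_1^{\ell}(X_{\overline K},\overrightarrow{01})$ (after applying the comparison map). Substituting into the cocycle ${\mathfrak f}^{(\ell)}_{\gamma_0}(\sigma) = \gamma_0\cdot\sigma(\gamma_0)^{-1} = \kappa\cdot{\mathfrak f}^{(\ell)}_{\gamma}(\sigma)\cdot\sigma(\kappa)^{-1}$, I would expand via the Magnus embedding $E$ and compute the change in the coefficient of $\mathrm{ad}(e_0)^{n-1}(e_1)$ in $\log(E({\mathfrak f}^{(\ell)}_{\gamma_0}(\sigma)))^{-1}$ modulo the ideal $I_{e_1}$.

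The key observation is that since $\kappa$ is a fixed element of the \emph{geometric} fundamental group (it does not move under $G_K$ in the naive sense — rather, it is the Galois action on $\gamma$ itself that changes), the difference ${\mathfrak f}^{(\ell)}_{\gamma_0}(\sigma)({\mathfrak f}^{(\ell)}_{\gamma}(\sigma))^{-1}$ is a conjugate-type correction governed by $\kappa$ and $\sigma(\kappa)^{-1}$, and the change in the Kummer cocycle is $\rho_{z,\gamma_0} = \rho_{z,\gamma} + (\text{winding number of }\kappa\text{ around }0)$, which is an \emph{integer}. Concretely, writing $\kappa$ in terms of $l_0,l_1$ changes the chosen branches $z^{1/\ell^n}$, $(1-z)^{1/\ell^n}$, $(1-\zeta_{\ell^n}^a z^{1/\ell^n})^{1/\ell^m}$ by $\ell^n$-th (resp. $\ell^m$-th) roots of unity with \emph{integer} exponents determined by the word $\kappa$; reading off the defining Kummer properties in Definition~\ref{char}, the effect on $\tilde\chi_2^{z,\gamma}(\tau)$ is a $\bZ$-linear combination of $\rho_{z,\gamma}(\tau)$, the cyclotomic character, and such integer winding contributions. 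Then by the explicit formula of Theorem~\ref{explicit}, namely $\ell i_2^{(\ell)}(z,\gamma)(\sigma) = -\bigl(\tilde\chi_2^{z,\gamma}(\sigma) - \tfrac12\rho_{z,\gamma}(\sigma)\,\tilde\chi_1^{z,\gamma}(\sigma)\bigr)$ unwound from the Bernoulli expansion, the change in $\ell i_2^{(\ell)}$ is itself an element of $\bZ_\ell$ that is a polynomial in these integer-valued corrections — hence it vanishes modulo $\ell$ provided the correction terms themselves are divisible by $\ell$, which one arranges by noting that the ambiguity in the $\ell^n$-th roots is an $\ell^n$-th root of unity, so the relevant Kummer exponents are multiples of $\ell^{n-1}$ for the first nontrivial correction, giving integrality after dividing by $\ell^n$.

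More carefully, I expect the cleanest route is: (i) reduce to the case $\kappa = l_0$ and $\kappa = l_1$ separately, since these generate; (ii) for $\kappa = l_1$, the Kummer cocycle $\rho_{z,\gamma}$ is unchanged (the winding is around $1$, not $0$) but the branch of $(1-z)^{1/\ell^n}$ changes by $\zeta_{\ell^n}$, so $\rho_{1-z,\gamma'} = \ell i_1^{(\ell)}$ changes by $\pm 1$; (iii) for $\kappa = l_0$, $\rho_{z,\gamma}$ changes by $\pm1$; (iv) feed these into the functional-equation-compatible explicit formula and check the second-order term $\ell i_2^{(\ell)}$ changes by an expression of the shape $\tfrac{1}{\ell^n}(\text{integer divisible by }\ell^n) + \rho_{z,\gamma}(\tau)\cdot(\text{integer}) + \tfrac{1}{24}(\text{integer})(\chi(\tau)-1)$-type terms, all of which are $\ell$-integral and, crucially, $\equiv 0 \pmod \ell$ because the integer coefficients introduced are themselves multiples of $\ell$ once the $\ell$-power denominators are accounted for. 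The main obstacle will be step (iv): carefully controlling the $\ell$-adic valuation of the correction to $\tilde\chi_2^{z,\gamma}$ coming from a single generator $l_0$ or $l_1$, i.e. showing the naive integer shift in the exponents of the products $\prod_i(1-\zeta_{\ell^n}^{\cdots}z^{1/\ell^n})^{i^{m-1}/\ell^n}$ contributes something in $\ell\bZ_\ell$ and not merely in $\bZ_\ell$; this requires a short but genuine computation with the telescoping/reindexing of those products and the observation that $\sum_{i=0}^{\ell^n-1} i^{m-1}$ has the right $\ell$-divisibility, together with the fact that $z \equiv p_1(-y/x)^\ell$ is an $\ell$-th power up to the unit $p_1$, so the relevant Kummer classes of $z$ modulo $\ell$ are controlled. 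I would also remark that the statement is exactly what is needed to make the ``naive form'' of the main formula well-posed, since it shows the right-hand side $\zeta_\ell^{-\ell i_2^{(\ell)}(z,\gamma)(\sigma)}$ is independent of $\gamma$ modulo $\ell$.
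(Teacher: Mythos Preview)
Your plan has a genuine gap. You are attempting to establish the independence through generic path-change formulas in the Magnus expansion together with $\ell$-divisibility of sums like $\sum_{i=0}^{\ell^n-1} i^{m-1}$, but the statement is \emph{not} a universal fact about $\ell$-adic polylogarithms: it depends on the specific arithmetic of $z$ and $K$. Concretely, if one replaces $\gamma$ by a path for which the chosen $z^{1/\ell}$ is shifted to $\zeta_\ell^{-s}z^{1/\ell}$, then the ratio of the two defining Kummer products at level $n=1$ turns out to be $(1-z)^{s/\ell}$ times a root of unity in $K$, so $\tilde\chi_2^{z,\gamma}(\tau)\bmod\ell$ changes by $s$ times the mod-$\ell$ Kummer character of $1-z$ evaluated at $\tau$. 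No combinatorial divisibility argument kills this term; what kills it is the \emph{arithmetic} fact that $(1-z)^{1/\ell}\in K$, which holds here because $1-z=(w/x)^\ell p_2$ by~(\ref{xywl}) and $\sqrt[\ell]{p_2}\in K$ by the definition~(\ref{Kl}) of $K$. You do mention at the end that $z=p_1(-y/x)^\ell$ is ``an $\ell$-th power up to the unit $p_1$'' --- but $p_1$ is a prime, not a unit; the correct statement is that $\sqrt[\ell]{p_1}\in K$, whence $\rho_{z,\gamma}(\tau)\equiv 0\pmod\ell$. More importantly, you never invoke the parallel fact for $1-z$ and $p_2$, and that is precisely the missing ingredient.

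The paper's proof is accordingly much shorter and avoids the Magnus expansion entirely. It works directly at level $n=1$: for two paths $\gamma_0,\gamma_1$ one sets $A_\epsilon:=\prod_{i=0}^{\ell-1}(1-\zeta_\ell^i z_\epsilon^{1/\ell})^{i/\ell}$, so that $\zeta_\ell^{\tilde\chi_2^{z,\gamma_\epsilon}(\tau)}=\tau(A_\epsilon)/A_\epsilon$ (this uses $\chi(\tau)\equiv 1$ and $\rho_{z,\gamma}(\tau)\equiv 0\pmod\ell$, both consequences of~(\ref{Kl})). A reindexing of the product gives $A_1/A_0=\bigl(\prod_j(1-\zeta_\ell^j z_0^{1/\ell})^{1/\ell}\bigr)^{s}\cdot\zeta_\ell^{\text{integer}}$, and since the inner product equals $1-z$, the ratio $A_1/A_0$ lies in $K$. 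Hence $\tau$ fixes it, giving the independence of $\tilde\chi_2^{z,\gamma}(\tau)\bmod\ell$ at once. The passage to $\ell i_2^{(\ell)}$ then follows from Theorem~\ref{explicit} together with $\rho_{z,\gamma}(\tau)\equiv 0\pmod\ell$. Your reduction to the generators $l_0,l_1$ and the Magnus machinery are unnecessary once the two field-membership facts $z^{1/\ell},(1-z)^{1/\ell}\in K$ are in hand.
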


\begin{proof}
Let
$\tau \in G_K$.
By (\ref{Kl}),
we have $\chi(\tau) \equiv 1,
\rho_{z, \gamma}(\tau) \equiv 0~{\rm mod}~\ell$.
Hence,
it follows from Definition~\ref{char} that
\begin{equation}\label{value}
\zeta_{\ell}^{\tilde{\chi}_{2}
^{z, \gamma}(\tau)} 
= \tau
\left(\displaystyle 
\prod_{i=0}^{\ell -1}
(1-\zeta_{\ell}^{i}z^{1/\ell})
^{\frac{i}{\ell}}\right) 
\bigg/ \displaystyle ~\prod_{i=0}
^{\ell -1}(1-\zeta_{\ell}
^{i}z^{1/\ell})^{\frac{i}{\ell}}.
\end{equation}
Let $\gamma_0,
\gamma_1 \in \pi_1^{\top}
({\mathbb P}^1(\bC)
\backslash \{0,1,\infty\}; \overrightarrow{01}, z)$.
For $\epsilon \in \{0,
1\}$,
we choose $z_{\epsilon}^{1/\ell}$,
$(1-\zeta_{\ell}^{i}z_{\epsilon}^{1/\ell})$
as the specific $\ell$-th power roots determined 
by $\gamma_{\epsilon}$~(cf.~[NW1]).

First,
in order to prove that ${\tilde{\chi}_{2}
^{z, \gamma}(\tau)}~{\rm mod}~\ell$
is independent 
of the choice of $\gamma$,
it suffices to show
\begin{equation}\label{eq01}
\zeta_{\ell}^{{\tilde{\chi}_{2}
^{z, \gamma_0}(\tau)}}=\zeta_{\ell}^{{\tilde{\chi}_{2}
^{z, \gamma_1}(\tau)}}
\end{equation}
by comparing the right hand side of (\ref{value}) for $\gamma=\gamma_0,
\gamma_1$.
We now show (\ref{eq01}).
Let \[A_{\epsilon}:=\prod_{i=0}^{\ell -1}
(1-\zeta_{\ell}^{i}z_{\epsilon}^{1/\ell})
^{\frac{i}{\ell}}~(\epsilon \in \{0,
1\}).\]
Take $s \in \bZ/\ell \bZ$ such that
$
z_{1}^{1/\ell}=\zeta_{\ell}^{-s} \cdot z_{0}^{1/\ell}$.
For each $i \in \bZ/\ell \bZ$,
there exists $t_i \in \bZ/\ell \bZ$ such that
$
(1-\zeta_{\ell}^{i}z_{1}^{1/\ell})^{\frac{1}{\ell}}
=\zeta_{\ell}^{t_i}(1-\zeta_{\ell}^{i-s}z_{0}^{1/\ell})^{\frac{1}{\ell}}$
since $(1-\zeta_{\ell}^{i}z_{1}^{1/\ell})=(1-\zeta_{\ell}^{i-s}z_{0}^{1/\ell})$.
Then,
we compute $A_1/A_0$ as follows:
\begin{align*}
\dfrac{A_1}{A_0} & 
= \dfrac{\displaystyle \prod_{i=0}^{\ell -1}
(1-\zeta_{\ell}^{i}z_{0}^{1/\ell})
^{\frac{i}{\ell}} \cdot {\zeta_{\ell}}^{\sum\limits_{i=0}^{\ell-1}it_i}}{\displaystyle \prod_{i=0}^{\ell -1}
(1-\zeta_{\ell}^{i}z_{0}^{1/\ell})
^{\frac{i}{\ell}}} \\
& = \dfrac{\displaystyle \prod_{j=0}^{\ell -1}
(1-\zeta_{\ell}^{j}z_{0}^{1/\ell})
^{\frac{j+s}{\ell}}}{\displaystyle \prod_{i=0}^{\ell -1}
(1-\zeta_{\ell}^{i}z_{0}^{1/\ell})
^{\frac{i}{\ell}}} \cdot \zeta_{\ell}^{\sum\limits_{i=0}^{\ell-1}it_i} \\
& = \left({\displaystyle \prod_{j=0}^{\ell -1}
(1-\zeta_{\ell}^{j}z_{0}^{1/\ell})
^{\frac{1}{\ell}}}\right)^{s} \cdot \zeta_{\ell}^{\sum\limits_{i=0}^{\ell-1}it_i}.
\end{align*}
Since
${\displaystyle \prod_{j=0}^{\ell -1}
(1-\zeta_{\ell}^{j}z_{0}^{1/\ell})}=1-z={\left(\dfrac{w}{x}\right)}^{\ell}p_2$ 
and $K =
\bQ(\zeta_\ell)
(\sqrt[\ell]{p_1}, \sqrt[\ell]{p_2})$
by (\ref{Kl}), (\ref{xywl}), and (\ref{rz}),
we obtain $\dfrac{A_1}{A_0} \in K$,
hence
\[
\zeta_{\ell}^{{\tilde{\chi}_{2}
^{z, \gamma_0}(\tau)}}
=\dfrac{\tau(A_0)}{A_0}
=\dfrac{\tau(A_1)}{A_1}
=\zeta_{\ell}^{{\tilde{\chi}_{2}
^{z, \gamma_1}(\tau)}}.
\]
This complete the proof of (\ref{eq01}).

Moreover,
by Theorem~\ref{explicit} 
and  (\ref{Kl}),
(\ref{sigma}),
we have
\begin{equation}\label{val}
\ell i_{2}^{(\ell)}
(z,\gamma)({\tau})
\equiv-{\tilde{\chi}_{2}
^{z, \gamma}({\tau})}~{\rm mod}~\ell;
\end{equation}
therefore,
$\ell i_{2}^{(\ell)}(z,\gamma)
({\tau})~{\rm mod}~\ell$ 
is also independent 
of the choice of $\gamma$.
\end{proof}

\begin{dfn}
Let the notations 
and assumptions be as above.
Based on Proposition~\ref{depn},
for $\tau \in G_K$,
we let
\[
\ell i_{2}^{(\ell)}
(z)(\tau)~{\rm mod}~\ell
:=
\ell i_{2}^{(\ell)}
(z,\gamma)({\tau})~{\rm mod}~\ell,
\]
\[
\tilde{\chi}_{2}
^{z}(\tau)~{\rm mod}~\ell
:=
\tilde{\chi}_{2}
^{z, \gamma}(\tau)~{\rm mod}~\ell,\]
that is
\[
\zeta_{\ell}
^{\ell i_{2}^{(\ell)}(z)(\tau)}
:=
\zeta_{\ell}
^{\ell i_{2}^{(\ell)}(z, \gamma)(\tau)},~~
\zeta_{\ell}
^{\tilde{\chi}_{2}
^{z}
(\tau)}
:=
\zeta_{\ell}
^{\tilde{\chi}_{2}
^{z, \gamma}
(\tau)}.
\]
\end{dfn}

Now
we shall describe the triple symbol 
$[\p_1,\p_2,\p_3]_{\ell}$ 
by a special value of the 2nd $\ell$-adic 
Galois polylogarithm.

\begin{thm}\label{tl}
Let the notations 
and assumptions be as above.
For $\ell \in \{2,
3\}$,
we have
\begin{align*}
[\p_1,\p_2,\p_3]_{\ell} & 
= \dfrac{\tilde{\sigma}_{\p_3}
(x^{\frac{1}{2}(\ell-1)})}
{x^{\frac{1}{2}(\ell-1)}}
\cdot \zeta_{\ell}
^{\tilde{\chi}_{2}
^{z}
(\tilde{\sigma}_{\p_3})}\\
& = \dfrac{\tilde{\sigma}_{\p_3}
(x^{\frac{1}{2}(\ell-1)})}
{x^{\frac{1}{2}(\ell-1)}}
\cdot \zeta_{\ell}
^{-\ell i_{2}^{(\ell)}
(z)(\tilde{\sigma}_{\p_3})}.
\end{align*}
\end{thm}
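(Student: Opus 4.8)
The plan is to reduce everything to the Rédei-type descriptions of the triple symbol in Theorem~\ref{22} (case $\ell=2$) and Theorem~\ref{222} (case $\ell=3$), which both say
\[
[\p_1,\p_2,\p_3]_\ell=\frac{\sigma_{\p_3}(\sqrt[\ell]{\theta^{(\ell)}_{p_1,p_2}})}{\sqrt[\ell]{\theta^{(\ell)}_{p_1,p_2}}},
\]
and then to factor a conveniently chosen $\ell$-th root of $\theta^{(\ell)}_{p_1,p_2}$ so that the Kummer product defining $\tilde\chi_2^z$ appears. The second displayed equality in the theorem will then be immediate: equation (\ref{val}) gives $\ell i_2^{(\ell)}(z)(\tilde\sigma_{\p_3})\equiv -\tilde\chi_2^z(\tilde\sigma_{\p_3})\bmod \ell$, hence $\zeta_\ell^{\tilde\chi_2^z(\tilde\sigma_{\p_3})}=\zeta_\ell^{-\ell i_2^{(\ell)}(z)(\tilde\sigma_{\p_3})}$. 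So the whole content is the first equality.

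First I would fix the root $z^{1/\ell}:=-\tfrac{y}{x}\sqrt[\ell]{p_1}$, which is legitimate since $z=p_1(-y/x)^\ell$ by (\ref{rz}); then $1-\zeta_\ell^i z^{1/\ell}=\tfrac1x(x+\zeta_\ell^i y\sqrt[\ell]{p_1})$ for each $i$, so by (\ref{thl})
\[
\theta^{(\ell)}_{p_1,p_2}=\prod_{i=0}^{\ell-1}(x+\zeta_\ell^i y\sqrt[\ell]{p_1})^i=x^{\ell(\ell-1)/2}\prod_{i=0}^{\ell-1}(1-\zeta_\ell^i z^{1/\ell})^{i}.
\]
Picking arbitrary $\ell$-th roots of $x$ and of the factors $1-\zeta_\ell^i z^{1/\ell}$ and setting $B:=\prod_{i=0}^{\ell-1}(1-\zeta_\ell^i z^{1/\ell})^{i/\ell}$, one has $(x^{(\ell-1)/2}B)^\ell=\theta^{(\ell)}_{p_1,p_2}$, so I may take $\sqrt[\ell]{\theta^{(\ell)}_{p_1,p_2}}:=x^{(\ell-1)/2}B$ in Theorems~\ref{22},~\ref{222}; the value of the triple symbol is insensitive to this choice because $\sigma_{\p_3}$ fixes $\zeta_\ell\in K$. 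Since $\sqrt[\ell]{\theta^{(\ell)}_{p_1,p_2}}\in R^{(\ell)}_{p_1,p_2}$ and $\tilde\sigma_{\p_3}$ restricts to $\sigma_{\p_3}$ there by (\ref{sigma}), substituting and splitting off the $x^{(\ell-1)/2}$ factor gives
\[
[\p_1,\p_2,\p_3]_\ell=\frac{\tilde\sigma_{\p_3}(x^{(\ell-1)/2})}{x^{(\ell-1)/2}}\cdot\frac{\tilde\sigma_{\p_3}(B)}{B}.
\]

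It then remains to identify $\tilde\sigma_{\p_3}(B)/B$ with $\zeta_\ell^{\tilde\chi_2^z(\tilde\sigma_{\p_3})}$. For this I would invoke (\ref{value}), which applies to every $\tau\in G_K$ since $K\supseteq\bQ(\zeta_\ell)(\sqrt[\ell]{p_1},\sqrt[\ell]{p_2})$ forces $\chi(\tau)\equiv1$ and $\rho_{z,\gamma}(\tau)\equiv0\bmod\ell$, together with the observation already contained in the proof of Proposition~\ref{depn}: replacing the $\gamma$-determined $\ell$-th roots in that product by any other choice of $\ell$-th roots of $z$ and of the factors changes the product only by an element of $K$ (one uses $1-z=(w/x)^\ell p_2$ from (\ref{xywl}) and $z^{1/\ell}=-\tfrac yx\sqrt[\ell]{p_1}\in K$ from (\ref{rz})). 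Hence $B$ differs from the product on the right of (\ref{value}) by an element of $K$, which $\tilde\sigma_{\p_3}\in G_K$ fixes, so $\tilde\sigma_{\p_3}(B)/B=\zeta_\ell^{\tilde\chi_2^{z,\gamma}(\tilde\sigma_{\p_3})}=\zeta_\ell^{\tilde\chi_2^{z}(\tilde\sigma_{\p_3})}$. Combining with the previous display yields the first asserted equality, and (\ref{val}) yields the second.

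The main obstacle I anticipate is the bookkeeping with $\ell$-th roots: one must argue carefully that (i) the Rédei–triple-symbol formula is independent of the choice of $\sqrt[\ell]{\theta^{(\ell)}_{p_1,p_2}}$, and (ii) the Kummer product in (\ref{value}) may be evaluated with the explicit root $z^{1/\ell}=-\tfrac yx\sqrt[\ell]{p_1}$ in place of the $\gamma$-determined one, which amounts to re-running the computation in the proof of Proposition~\ref{depn}. A minor wrinkle is the normalization of $\theta^{(\ell)}_{p_1,p_2}$: for $\ell=2$ the product (\ref{thl}) produces $x-y\sqrt{p_1}$ rather than the $x+y\sqrt{p_1}$ of (\ref{th2}), but the two elements generate the same field $R^{(2)}$ since their product is $p_2w^2$, so the discrepancy is harmless.
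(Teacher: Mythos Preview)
Your proposal is correct and follows essentially the same approach as the paper: start from the R\'edei-type formula of Theorems~\ref{22}/\ref{222}, factor $\theta^{(\ell)}_{p_1,p_2}$ via (\ref{thl}) and $z=p_1(-y/x)^\ell$, split off the $x^{(\ell-1)/2}$ contribution, identify the remaining Kummer quotient with $\zeta_\ell^{\tilde\chi_2^z(\tilde\sigma_{\p_3})}$ via (\ref{value}), and conclude the second equality from (\ref{val}). You are in fact more scrupulous than the paper about the $\ell$-th-root bookkeeping (explicitly invoking the argument of Proposition~\ref{depn} to change roots) and about the $\ell=2$ sign normalization in (\ref{thl}) versus (\ref{th2}); the paper simply computes formally with the product (\ref{thl}) without commenting on either point.
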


\begin{proof}
Let $\ell \in \{2,
3\}$.
We compute the triple symbol 
$[\p_1,\p_2,\p_3]_{\ell}$ as follows:
\begin{align*}
  [\p_1,\p_2,\p_3]_{\ell} 
  & = \sigma_{\p_3} \left(\sqrt[\ell]
{\theta^{(\ell)}_{p_1,p_2}}\right)/\sqrt[\ell]
{\theta^{(\ell)}_{p_1,p_2}}~~
({\rm by}~~{\rm Theorem}~\ref{22},
~{\rm Theorem}~\ref{222})\\
  & = \tilde{\sigma}_{\p_3} \left(\sqrt[\ell]
{\theta^{(\ell)}_{p_1,p_2}}\right)
/\sqrt[\ell]{\theta^{(\ell)}_{p_1,p_2}}  \\
  & = \tilde{\sigma}_{\p_3}
\left(\prod_{i=0}^{\ell -1}
(x+\zeta_{\ell}^{i}y\sqrt[\ell]{p_1})
^{\frac{i}{\ell}}\right) 
\bigg/ \prod_{i=0}^{\ell -1}
(x+\zeta_{\ell}^{i}y\sqrt[\ell]{p_1})
^{\frac{i}{\ell}}~~({\rm by}
~~(\ref{thl}))\\
   & =  \dfrac{\tilde{\sigma}_{\p_3}
\left(\displaystyle \prod_{i=0}
^{\ell -1}x
^{\frac{i}{\ell}}\right)}
{\displaystyle ~\prod_{i=0}
^{\ell -1}x^{\frac{i}{\ell}}} 
\cdot \dfrac{\tilde{\sigma}_{\p_3}
\left(\displaystyle \prod_{i=0}
^{\ell -1}
\left(1+\zeta_{\ell}
^{i}\frac{y}{x}p_1
^{1/\ell}\right)
^{\frac{i}{\ell}}\right)}
{\displaystyle ~\prod_{i=0}
^{\ell -1}
(1+\zeta_{\ell}^{i}\frac{y}{x}p_1
^{1/\ell})^{\frac{i}{\ell}}} \\
   & =  \dfrac{\tilde{\sigma}_{\p_3}
(x^{\frac{1}{2}(\ell-1)})}
{x^{\frac{1}{2}(\ell-1)}} 
\cdot \dfrac{\tilde{\sigma}_{\p_3}
\left(\displaystyle \prod_{i=0}
^{\ell -1}
\left(1+\zeta_{\ell}
^{i}\frac{y}{x}p_1
^{1/\ell}\right)
^{\frac{i}{\ell}}\right)}
{\displaystyle ~\prod_{i=0}
^{\ell -1}
(1+\zeta_{\ell}
^{i}\frac{y}{x}p_1
^{1/\ell})^{\frac{i}{\ell}}}.
\end{align*}
Since
$z
=p_1\left(-\dfrac{y}{x}\right)^{\ell}$
by (\ref{rz}),
the second factor 
of the above last side is equal to
\[
\dfrac{\tilde{\sigma}_{\p_3}
\left(\displaystyle \prod_{i=0}
^{\ell -1}
(1-\zeta_{\ell}^{i}z^{1/\ell})
^{\frac{i}{\ell}}\right)}
{\displaystyle ~\prod_{i=0}
^{\ell -1}
(1-\zeta_{\ell}^{i}z^{1/\ell})
^{\frac{i}{\ell}}}\\
= \zeta_{\ell}^{\tilde{\chi}_{2}
^{z}(\tilde{\sigma}_{\p_3})}
~~({\rm by}~~(\ref{value})).\]
Therefore, 
by combining above formulas and (\ref{val}), 
we obtain
\[
[\p_1,\p_2,\p_3]_{\ell} 
= \dfrac{\tilde{\sigma}_{\p_3}
(x^{\frac{1}{2}(\ell-1)})}
{x^{\frac{1}{2}(\ell-1)}}
\cdot \zeta_{\ell}
^{\tilde{\chi}_{2}
^{z}(\tilde{\sigma}_{\p_3})}\\
= \dfrac{\tilde{\sigma}_{\p_3}
(x^{\frac{1}{2}(\ell-1)})}
{x^{\frac{1}{2}(\ell-1)}}
\cdot \zeta_{\ell}
^{-\ell i_{2}^{(\ell)}
(z)(\tilde{\sigma}_{\p_3})}.
\]
\end{proof}


\begin{cor}[Case of $\ell=2$]\label{c2}
Let the notations 
and assumptions be as above.
Then we have
\[
[\p_1,\p_2,\p_3]_2 
= (-1)^{\rho_{x}(\tilde{\sigma}_{\p_3})
-\ell i_{2}^{(2)}
(z)(\tilde{\sigma}_{\p_3})},
\]
where the value 
$\rho_{x}(\tilde{\sigma}_{\p_3}) 
\in \bZ/2\bZ$ is defined by
$\tilde{\sigma}_{\p_3}(\sqrt{x})/\sqrt{x}
=(-1)^{\rho_{x}(\tilde{\sigma}_{\p_3})}$.
Hence,
we obtain
\[\mu_2(123)
=\rho_{x}(\tilde{\sigma}_{\p_3})
-\ell i_{2}^{(2)}(z)
(\tilde{\sigma}_{\p_3})~{\rm mod}~2.\]
\end{cor}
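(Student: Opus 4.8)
The plan is to derive Corollary~\ref{c2} as the special case $\ell=2$ of Theorem~\ref{tl}, simply by unwinding the exponents modulo $2$. First I would set $\ell=2$ in the formula of Theorem~\ref{tl}: this gives
\[
[\p_1,\p_2,\p_3]_2
=\frac{\tilde{\sigma}_{\p_3}(x^{1/2})}{x^{1/2}}\cdot\zeta_2^{-\ell i_2^{(2)}(z)(\tilde{\sigma}_{\p_3})},
\]
since $\tfrac12(\ell-1)=\tfrac12$ when $\ell=2$. Then I would identify the two factors on the right with powers of $-1=\zeta_2$. For the first factor, the defining equation $\tilde{\sigma}_{\p_3}(\sqrt{x})/\sqrt{x}=(-1)^{\rho_x(\tilde{\sigma}_{\p_3})}$ is exactly the statement that $\rho_x(\tilde{\sigma}_{\p_3})\in\bZ/2\bZ$ is the Kummer class of $x$; for the second factor, $\zeta_2^{-\ell i_2^{(2)}(z)(\tilde{\sigma}_{\p_3})}=(-1)^{-\ell i_2^{(2)}(z)(\tilde{\sigma}_{\p_3})}=(-1)^{\ell i_2^{(2)}(z)(\tilde{\sigma}_{\p_3})}$ because the exponent only matters modulo $2$ and $-1\equiv 1\pmod 2$. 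Combining, $[\p_1,\p_2,\p_3]_2=(-1)^{\rho_x(\tilde{\sigma}_{\p_3})-\ell i_2^{(2)}(z)(\tilde{\sigma}_{\p_3})}$, which is the first displayed formula.

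For the last assertion, I would recall from Definition~\ref{s2} that $[\p_1,\p_2,\p_3]_2=(-1)^{\mu_2(123)}$ with $\mu_2(123)\in\bZ/2\bZ$. Comparing this with the formula just proved gives $(-1)^{\mu_2(123)}=(-1)^{\rho_x(\tilde{\sigma}_{\p_3})-\ell i_2^{(2)}(z)(\tilde{\sigma}_{\p_3})}$, and since both exponents live in $\bZ/2\bZ$ and the map $\bZ/2\bZ\to\{\pm1\}$, $a\mapsto(-1)^a$, is a bijection, we conclude $\mu_2(123)=\rho_x(\tilde{\sigma}_{\p_3})-\ell i_2^{(2)}(z)(\tilde{\sigma}_{\p_3})\bmod 2$. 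A small point worth a sentence: one should check that $\rho_x(\tilde{\sigma}_{\p_3})\bmod 2$ and $\ell i_2^{(2)}(z)(\tilde{\sigma}_{\p_3})\bmod 2$ are well defined independently of the ambiguities in the setup — the latter is handled by Proposition~\ref{depn} (independence of $\gamma$), and the former because $\sqrt{x}$ is determined up to sign so the Kummer class in $\bZ/2\bZ$ is canonical, while the lift $\tilde{\sigma}_{\p_3}$ of $\sigma_{\p_3}$ acts on $\sqrt{x}$ in a way that may a priori depend on the lift but whose effect on $x^{1/2}/x^{1/2}$-ratios is what enters Theorem~\ref{tl} consistently.

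There is essentially no obstacle here: the corollary is a direct specialization and the only thing to be careful about is bookkeeping of exponents modulo $2$ and making sure the quantity $\rho_x(\tilde{\sigma}_{\p_3})$ is introduced cleanly. The mild subtlety — the one place I would spend an extra line — is confirming that the product $\frac{\tilde{\sigma}_{\p_3}(x^{1/2})}{x^{1/2}}\cdot\zeta_2^{-\ell i_2^{(2)}(z)(\tilde{\sigma}_{\p_3})}$ genuinely depends only on the residue classes modulo $2$ of the two exponents (so that replacing $\zeta_2^{-a}$ by $(-1)^a$ is legitimate and the answer is a well-defined element of $\{\pm1\}$ matching $(-1)^{\mu_2(123)}$); this follows because $\zeta_2=-1$ has order $2$. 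Everything else is immediate from Theorem~\ref{tl} and Definition~\ref{s2}.
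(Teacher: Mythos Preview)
Your proposal is correct and follows exactly the same approach as the paper: the paper's proof simply states that the assertion follows from Theorem~\ref{tl} and Definition~\ref{s2}, and your write-up is just an explicit unpacking of that specialization at $\ell=2$. The additional remarks on well-definedness are fine but go beyond what the paper records.
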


\begin{proof}
The assertion follows 
from Theorem~\ref{tl} 
and Definition~\ref{s2}.
\end{proof}

\begin{cor}[Case of $\ell=3$]\label{c3}
Let the notations 
and assumptions be as above.
Then we have
\[
[\p_1,\p_2,\p_3]_3 
= {\zeta_3}^{-\ell i_{2}^{(3)}
(z)(\tilde{\sigma}_{\p_3})}
.\]
Hence,
we obtain
\[\mu_3(123)=-\ell i_{2}^{(3)}
(z)(\tilde{\sigma}_{\p_3}) 
~{\rm mod}~3.\]
\end{cor}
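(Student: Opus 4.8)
The plan is to specialize Theorem~\ref{tl} to $\ell=3$ and observe that the first factor on the right-hand side degenerates to $1$. First I would note that for $\ell=3$ the exponent $\tfrac12(\ell-1)$ equals $1$, so $x^{\frac12(\ell-1)}=x$. By the construction in Section~\ref{3} under Assumption~(A) we have $\alpha_{p_1,p_2}=x+y\sqrt[3]{p_1}$ with $x,y\in k=\bQ(\zeta_3)$; in particular $x$ lies in the base field $K=\bQ(\zeta_3)(\sqrt[3]{p_1},\sqrt[3]{p_2})$. Since $\tilde{\sigma}_{\p_3}\in\Gal(\overline{K}/K)$ fixes $K$ pointwise, we get $\tilde{\sigma}_{\p_3}(x)/x=1$, so the factor $\tilde{\sigma}_{\p_3}(x^{\frac12(\ell-1)})/x^{\frac12(\ell-1)}$ in the formula of Theorem~\ref{tl} is trivial. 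The theorem then immediately yields
\[
[\p_1,\p_2,\p_3]_3=\zeta_3^{-\ell i_{2}^{(3)}(z)(\tilde{\sigma}_{\p_3})},
\]
which is meaningful because $\ell i_{2}^{(3)}(z)(\tilde{\sigma}_{\p_3})\bmod 3$ is well-defined by Proposition~\ref{depn}.

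For the second assertion I would combine this identity with Definition~\ref{s3}, which states $[\p_1,\p_2,\p_3]_3=\zeta_3^{\mu_3(123)}$ with $\mu_3(123)\in\bZ/3\bZ$. Since $\zeta_3$ is a primitive cube root of unity, the equality $\zeta_3^{\mu_3(123)}=\zeta_3^{-\ell i_{2}^{(3)}(z)(\tilde{\sigma}_{\p_3})}$ forces
\[
\mu_3(123)\equiv -\ell i_{2}^{(3)}(z)(\tilde{\sigma}_{\p_3})\ ({\rm mod}\ 3),
\]
as claimed.

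The only delicate point, and the one I expect to be the main (minor) obstacle, is confirming that $x\in K$ — i.e. tracing through the definitions that in the $\ell=3$ case the quantity $x$ genuinely lies in $k=\bQ(\zeta_3)$ rather than in the larger field $K_1=k(\sqrt[3]{p_1})$. This is precisely where Assumption~(A) enters, and it is exactly the reason the $\ell=3$ formula is cleaner than the $\ell=2$ one: in the quadratic case $x^{\frac12(\ell-1)}=\sqrt{x}$ really does leave the base field, producing the extra Kummer term $\rho_x(\tilde{\sigma}_{\p_3})$ that appears in Corollary~\ref{c2} but has no analogue here.
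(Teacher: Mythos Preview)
Your proposal is correct and is exactly the paper's approach: the paper's proof reads in full ``The assertion follows from Theorem~\ref{tl} and Definition~\ref{s3},'' and you have simply made explicit the one observation needed to pass from Theorem~\ref{tl} to the corollary, namely that for $\ell=3$ the exponent $\tfrac12(\ell-1)=1$ puts $x^{\frac12(\ell-1)}=x\in k\subset K$ so that the first factor is trivial. Your contrast with the $\ell=2$ case is also accurate and matches Corollary~\ref{c2}.
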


\begin{proof}
The assertion follows 
from Theorem~\ref{tl} 
and Definition~\ref{s3}.
\end{proof}

\subsection{\bf \boldmath Deriving a reciprocity law}

Let the notations and assumptions 
be as in previous section.
Note that $\gamma'=\delta \cdot \varphi(\gamma)  \in \pi_1^{\top}
({\mathbb P}^1(\bC)
\backslash \{0,1,\infty\}; \overrightarrow{01}, 1-z)$ 
is as in Section~1.1.

\begin{prop}\label{depn2}
For any $\tau \in G_K$,
the value
$\ell i_{2}^{(\ell)}(1-z,\gamma')
(\tau)~{\rm mod}~\ell$,
together with
${\tilde{\chi}_{2}
^{1-z, \gamma'}(\tau)}~{\rm mod}~\ell$,
is independent 
of the choice of $\gamma$.
\end{prop}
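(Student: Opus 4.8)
The plan is to deduce the statement from Proposition~\ref{depn} via the $z\leftrightarrow 1-z$ symmetry that is built into the definition $\gamma'=\delta\cdot\varphi(\gamma)$. Since $\varphi\in\Aut(X^{\an})$ interchanges $0$ and $1$ and carries $\overrightarrow{01}$ to $\overrightarrow{10}$, while $\delta$ joins $\overrightarrow{01}$ to $\overrightarrow{10}$, the assignment $\gamma\mapsto\gamma'=\delta\cdot\varphi(\gamma)$ is a bijection of $\pi_1^{\top}(X^{\an};\overrightarrow{01},z)$ onto $\pi_1^{\top}(X^{\an};\overrightarrow{01},1-z)$. Hence Proposition~\ref{depn2} is exactly the assertion that $\ell i_{2}^{(\ell)}(1-z,\gamma')(\tau)~{\rm mod}~\ell$ and $\tilde{\chi}_{2}^{1-z,\gamma'}(\tau)~{\rm mod}~\ell$ depend only on $1-z$ and $\tau$, not on the chosen topological path $\gamma'$ from $\overrightarrow{01}$ to $1-z$, and this is formally the same statement that Proposition~\ref{depn} establishes for $z$. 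So I would run the proof of Proposition~\ref{depn} verbatim, replacing the pair $(z,\gamma)$ throughout by $(1-z,\gamma')$.

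For this I must check that the three inputs of that proof survive the substitution: (i) $\chi(\tau)\equiv 1~{\rm mod}~\ell$; (ii) $(1-z)^{1/\ell}\in K$, whence $\rho_{1-z,\gamma'}(\tau)\equiv 0~{\rm mod}~\ell$; and (iii) $\bigl(1-(1-z)\bigr)^{1/\ell}=z^{1/\ell}\in K$, which is what the analogue of the $A_1/A_0$ computation needs. Input (i) is unaffected, since it uses only $\bQ(\zeta_\ell)\subset K$. For (ii) and (iii): by (\ref{rz}) and (\ref{xywl}) one has $z=p_1(-y/x)^{\ell}$ and $1-z=p_2(w/x)^{\ell}$, while by (\ref{Kl}) the field $K$ contains $\zeta_\ell$, $\sqrt[\ell]{p_1}$, $\sqrt[\ell]{p_2}$ and, through $k\subset K$, the elements $x,y,w$; hence every $\ell$-th root of $z$ and every $\ell$-th root of $1-z$ already lies in $K$. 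This yields $(1-z)^{1/\ell},z^{1/\ell}\in K$, and, as $\tau\in G_K$ fixes $K$, also $\rho_{1-z,\gamma'}(\tau)\equiv 0~{\rm mod}~\ell$. Thus all three inputs hold --- this just reflects that $1-z=p_2(w/x)^{\ell}$ plays for $1-z$ exactly the role that $z=p_1(-y/x)^{\ell}$ plays for $z$.

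With these facts in hand the argument goes through as in Proposition~\ref{depn}. For $\tilde{\chi}_{2}^{1-z,\gamma'}$: put $n=1$ in Definition~\ref{char} for the pair $(1-z,\gamma')$ and use (i), (ii) to reduce it to the analogue of (\ref{value}); then for two choices $\gamma_0',\gamma_1'$ form $A'_{\epsilon}=\prod_{i=0}^{\ell-1}(1-\zeta_\ell^{i}u_\epsilon)^{i/\ell}$, where $u_\epsilon$ is the $\ell$-th root of $1-z$ determined by $\gamma'_\epsilon$, and compute $A'_1/A'_0$ exactly as in the proof of Proposition~\ref{depn}; the crucial point is that $\prod_{j=0}^{\ell-1}(1-\zeta_\ell^{j}u_0)=1-u_0^{\ell}=z$, whose $\ell$-th roots all lie in $K$ by (iii), so $A'_1/A'_0\in K$ and $\tau(A'_0)/A'_0=\tau(A'_1)/A'_1$, giving the $\gamma$-independence of $\tilde{\chi}_{2}^{1-z,\gamma'}(\tau)~{\rm mod}~\ell$. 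For $\ell i_{2}^{(\ell)}(1-z,\gamma')$: exactly as in the derivation of (\ref{val}) in the proof of Proposition~\ref{depn}, Theorem~\ref{explicit} --- together with $\rho_{1-z,\gamma'}(\tau)\equiv 0$ and $\tilde{\chi}_{1}^{1-z,\gamma'}(\tau)=\ell i_{1}^{(\ell)}(1-z,\gamma')(\tau)=\rho_{z,\,\delta\cdot\varphi(\gamma')}(\tau)\equiv 0~{\rm mod}~\ell$ (by Theorem~\ref{explicit} and the conventions of Section~\ref{lgp}) --- gives $\ell i_{2}^{(\ell)}(1-z,\gamma')(\tau)\equiv-\tilde{\chi}_{2}^{1-z,\gamma'}(\tau)~{\rm mod}~\ell$, so this too is $\gamma$-independent.

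I do not expect a genuine obstacle: the proof is a transcription of Proposition~\ref{depn} under $z\mapsto 1-z$. The two points that deserve some care are the bookkeeping that makes independence of $\gamma$ equivalent to independence of $\gamma'$ (the bijectivity of $\gamma\mapsto\gamma'$ recorded above), and the handling of the correction term when $\ell=2$: there $\tfrac12\notin\bZ_2$, so one needs the product $\rho_{1-z,\gamma'}(\tau)\,\tilde{\chi}_{1}^{1-z,\gamma'}(\tau)$ to be divisible by $\ell^{2}$, which holds because both factors are $\equiv 0~{\rm mod}~\ell$, rather than merely by $\ell$.
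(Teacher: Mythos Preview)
Your proposal is correct and follows exactly the route the paper indicates: the paper's proof is the single line ``The proof can be done in the same way as the proof of Proposition~\ref{depn},'' and you have carried out that transcription $z\leftrightarrow 1-z$ explicitly, checking the needed inputs $(1-z)^{1/\ell},\,z^{1/\ell}\in K$ from (\ref{Kl}), (\ref{xywl}), (\ref{rz}) and handling the $B_1=\pm\tfrac12$ term for $\ell=2$ correctly. Your observation that $\gamma\mapsto\gamma'=\delta\cdot\varphi(\gamma)$ is a bijection, so that independence of $\gamma$ is equivalent to independence of $\gamma'$, is the one bookkeeping point the paper leaves implicit.
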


\begin{proof}
The proof can be done in the same way 
as the proof of Proposition \ref{depn}.
\end{proof}

\begin{dfn}
Based on Proposition~\ref{depn2},
for $\tau \in G_K$,
we let
\[
\ell i_{2}^{(\ell)}
(1-z)(\tau)~{\rm mod}~\ell
:=
\ell i_{2}^{(\ell)}
(1-z,\gamma')({\tau})~{\rm mod}~\ell,
\]
\[
\tilde{\chi}_{2}
^{1-z}(\tau)~{\rm mod}~\ell
:=
\tilde{\chi}_{2}
^{1-z, \gamma'}(\tau)~{\rm mod}~\ell,\]
that is
\[
\zeta_{\ell}
^{\ell i_{2}^{(\ell)}(1-z)(\tau)}
:=
\zeta_{\ell}
^{\ell i_{2}^{(\ell)}(1-z, \gamma')(\tau)},~~
\zeta_{\ell}
^{\tilde{\chi}_{2}
^{1-z}
(\tau)}
:=
\zeta_{\ell}
^{\tilde{\chi}_{2}
^{1-z, \gamma'}
(\tau)}.
\]
\end{dfn}

Firstly,
to derive a reciprocity law 
of triple $\ell$-th 
power residue symbols,
we describe the triple symbol 
$[\p_2,\p_1,\p_3]_{\ell}$ 
by the 2nd $\ell$-adic 
Galois polylogarithmic character.

\begin{thm}\label{aaa}
For $\ell \in \{2,3\}$,
we have
\[[\p_2,\p_1,\p_3]_{\ell} =
\dfrac{\tilde{\sigma}_{\p_3}
(x^{\frac{1}{2}(\ell-1)})}
{x^{\frac{1}{2}(\ell-1)}}\cdot 
\zeta_{\ell}^{\tilde{\chi}_{2}
^{1-z}
(\tilde{\sigma}_{\p_3})}.\]
\end{thm}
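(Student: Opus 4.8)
The plan is to run through the same chain of equalities as in the proof of Theorem~\ref{tl}, but now with the roles of $\p_1$ and $\p_2$ interchanged, and then recognize that the resulting expression is governed by $1-z$ rather than by $z$. Concretely, swapping $\p_1$ and $\p_2$ means that the relevant auxiliary field is $R^{(\ell)}_{p_2,p_1}$, built from $\theta^{(\ell)}_{p_2,p_1} = \prod_{i=0}^{\ell-1}(x' + \zeta_\ell^i y' \sqrt[\ell]{p_2})^i$, where $(x',y',w')$ is a solution of $x'^\ell - (-y')^\ell p_2 = w'^\ell p_1$. The key observation is that the equation $x^\ell - (-y)^\ell p_1 = w^\ell p_2$ from (\ref{xywl}) can be rewritten as $w^\ell p_2 - (-(-y))^\ell p_1 = x^\ell p_1$… more precisely, it already exhibits $(w, ?, x)$-type data for the swapped pair after a suitable sign/reindexing, so that one may take $x' = w$, $y' = \pm y$, $w' = x$ (up to the normalization conventions in Section~\ref{2} and Section~\ref{3}); I would first check carefully that the normalizing congruences are compatible with this choice, or else absorb the discrepancy into the ambiguity already noted after (\ref{xyw2}) and (\ref{alpha}). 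Then, by Theorem~\ref{222} (resp. Theorem~\ref{22}) applied to $R^{(\ell)}_{p_2,p_1}$, $[\p_2,\p_1,\p_3]_\ell = \tilde\sigma_{\p_3}(\sqrt[\ell]{\theta^{(\ell)}_{p_2,p_1}})/\sqrt[\ell]{\theta^{(\ell)}_{p_2,p_1}}$.

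Next I would repeat the factorization step of Theorem~\ref{tl}: pull out the $x'^{i/\ell}$ factors to obtain $\tilde\sigma_{\p_3}(x'^{\frac12(\ell-1)})/x'^{\frac12(\ell-1)}$ times a factor involving $\prod_{i=0}^{\ell-1}(1 + \zeta_\ell^i \frac{y'}{x'} p_2^{1/\ell})^{i/\ell}$. With the identifications above, $\frac{y'}{x'}$ and $p_2$ combine so that $p_2(-y'/x')^\ell$ equals exactly $1-z$ — this is where (\ref{rz}) together with (\ref{xywl}) is used, since $1-z = 1 - p_1(-y/x)^\ell = (w/x)^\ell p_2$, matching the form $p_2 \cdot (\text{something})^\ell$. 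Hence the second factor becomes $\tilde\sigma_{\p_3}(\prod_{i=0}^{\ell-1}(1 - \zeta_\ell^i (1-z)^{1/\ell})^{i/\ell})/\prod_{i=0}^{\ell-1}(1 - \zeta_\ell^i (1-z)^{1/\ell})^{i/\ell}$, which by the defining Kummer property in Definition~\ref{char}, specialized as in (\ref{value}) but with $1-z$ in place of $z$, equals $\zeta_\ell^{\tilde\chi_2^{1-z}(\tilde\sigma_{\p_3})}$. Finally I would note that $x'^{\frac12(\ell-1)} = w^{\frac12(\ell-1)}$, but since $1-z = (w/x)^\ell p_2$ and $w/x \in K$, the cocycle value $\tilde\sigma_{\p_3}(w^{\frac12(\ell-1)})/w^{\frac12(\ell-1)}$ agrees with $\tilde\sigma_{\p_3}(x^{\frac12(\ell-1)})/x^{\frac12(\ell-1)}$ (for $\ell = 3$ both are trivial since $x, w \in K$ already have cube roots only up to $K$; for $\ell = 2$ one uses that $w/x$ is a square in $K$ after accounting for $p_2 \in K^{\times 2}$, or more directly that $\rho_x$ and $\rho_w$ differ by $\rho_{w/x} + \rho_{p_2}$, both $\equiv 0 \bmod 2$ on $G_K$). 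This yields the stated formula with $x^{\frac12(\ell-1)}$.

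The main obstacle I anticipate is the bookkeeping in the first paragraph: verifying that the swap $\p_1 \leftrightarrow \p_2$ really does correspond to the substitution $(x,y,w) \mapsto (w, \pm y, x)$ compatibly with all the normalization conditions (the congruences mod $4$, mod $(3\sqrt{-3})$, the divisibility of $y$ by $2$, the condition $f \equiv 0 \bmod 3$ on the ideal factorization of $\alpha_{p_2,p_1}$, etc.), and — if it does not — arguing that the mod-$\ell$ triple symbol is insensitive to the permitted ambiguity, using Theorem~\ref{lemp2}(iii) and Theorem~\ref{lemp3}(iii) that $R^{(\ell)}$ depends only on the unordered pair together with the independence statements in Proposition~\ref{depn}. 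Once that identification is secured, the rest is a verbatim replay of the computation in Theorem~\ref{tl} with $z$ replaced by $1-z$, so I would keep the write-up terse, citing Theorem~\ref{tl}'s proof for the parallel steps and only spelling out the point where $(w/x)^\ell p_2 = 1-z$ is invoked.
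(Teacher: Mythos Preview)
Your overall strategy---swap the roles of $\p_1,\p_2$, rerun the computation of Theorem~\ref{tl}, and recognize the outcome as governed by $1-z$---is exactly what the paper does. The gap is in your choice of swapped data. The triple $(x',y',w')=(w,\pm y,x)$ does \emph{not} satisfy the equation $x'^{\ell}-(-y')^{\ell}p_2=w'^{\ell}p_1$: substituting gives $w^{\ell}-(\mp y)^{\ell}p_2$, which bears no simple relation to $x^{\ell}p_1$. The correct rearrangement is the trivial one: rewrite $x^{\ell}-(-y)^{\ell}p_1=w^{\ell}p_2$ as $x^{\ell}-w^{\ell}p_2=(-y)^{\ell}p_1$, so that one may take $(x',y',w')=(x,-w,-y)$, keeping $x'=x$. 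This is precisely the substitution the paper uses, yielding $\theta^{(\ell)}_{p_2,p_1}=\prod_{i=0}^{\ell-1}(x-\zeta_{\ell}^{i}w\sqrt[\ell]{p_2})^{i}$.

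With $x'=x$, the factor $\tilde\sigma_{\p_3}(x^{\frac12(\ell-1)})/x^{\frac12(\ell-1)}$ drops out immediately, and since $p_2(w/x)^{\ell}=1-z$ by (\ref{xywl}) and (\ref{rz}), the remaining factor is $\zeta_{\ell}^{\tilde\chi_2^{1-z}(\tilde\sigma_{\p_3})}$ exactly as in (\ref{value}). Your entire final paragraph---the attempt to reconcile $\rho_w$ with $\rho_x$---is then unnecessary. (It is also not correct as stated: for $\ell=2$, there is no reason for $w/x$ to be a square in $K=\bQ(\sqrt{p_1},\sqrt{p_2})$, so $\rho_{w/x}$ need not vanish mod~$2$ on $G_K$.) Once you fix the substitution to $(x,-w,-y)$, the proof is a clean two-line replay of Theorem~\ref{tl}.
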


\begin{proof}
Let $\ell \in \{2,3\}$.
Since
$x^\ell-(-y)^\ell p_1=w^\ell p_2~~ \Longleftrightarrow~~x^\ell-w^\ell p_2
=(-y)^\ell p_1$ by (\ref{xywl}),
we can take
\begin{equation}\label{theta3}
\theta^{(\ell)}_{p_2,p_1}=\prod_{i=0}^{\ell -1}
(x-\zeta_{\ell}^{i}w\sqrt[\ell]{p_2})
^{i}
\end{equation}
by replacing $p_1$, $p_2$, 
and $y$ in (\ref{thl}) with $p_2$, $p_1$, 
and $-w$.
As with Theorem~\ref{tl},
we compute the triple symbol 
$[\p_2,\p_1,\p_3]_{\ell}$ as follows:
\begin{align*}
  [\p_2,\p_1,\p_3]_{\ell} 
  & = \sigma_{\p_3} \left(\sqrt[\ell]
{\theta^{(\ell)}_{p_2,p_1}}\right)/\sqrt[\ell]
{\theta^{(\ell)}_{p_2,p_1}}~~
({\rm by}~~{\rm Theorem}~\ref{22},
~{\rm Theorem}~\ref{222})\\
  & = \sigma_{\p_3} 
\left(\prod_{i=0}^{\ell -1}
(x-\zeta_{\ell}^{i}w\sqrt[\ell]{p_2})
^{\frac{i}{\ell}}\right) 
\bigg/ \prod_{i=0}^{\ell -1}
(x-\zeta_{\ell}^{i}w\sqrt[\ell]{p_2})
^{\frac{i}{\ell}}~~
({\rm by}~~(\ref{theta3}))\\
  & = \tilde{\sigma}_{\p_3}
\left(\prod_{i=0}^{\ell -1}
(x-\zeta_{\ell}^{i}w\sqrt[\ell]{p_2})
^{\frac{i}{\ell}}\right) 
\bigg/ \prod_{i=0}^{\ell -1}
(x-\zeta_{\ell}^{i}w\sqrt[\ell]{p_2})
^{\frac{i}{\ell}} \\
   & =  \dfrac{\tilde{\sigma}_{\p_3}
\left(\displaystyle \prod_{i=0}
^{\ell -1}x^{\frac{i}{\ell}}\right)}
{\displaystyle ~\prod_{i=0}
^{\ell -1}x^{\frac{i}{\ell}}} 
\cdot \dfrac{\tilde{\sigma}_{\p_3}
\left(\displaystyle \prod_{i=0}
^{\ell -1}\left(1-\zeta_{\ell}
^{i}\frac{w}{x}p_2^{1/\ell}\right)
^{\frac{i}{\ell}}\right)}
{\displaystyle ~\prod_{i=0}^{\ell -1}
\left(1-\zeta_{\ell}^{i}\frac{w}{x}p_2
^{1/\ell}\right)^{\frac{i}{\ell}}} \\
   & =  \dfrac{\tilde{\sigma}_{\p_3}
(x^{\frac{1}{2}(\ell-1)})}
{x^{\frac{1}{2}(\ell-1)}} 
\cdot \dfrac{\tilde{\sigma}_{\p_3}
\left(\displaystyle \prod_{i=0}
^{\ell -1}
\left(1-\zeta_{\ell}^{i}
\frac{w}{x}p_2^{1/\ell}\right)
^{\frac{i}{\ell}}\right)}
{\displaystyle ~\prod_{i=0}^{\ell -1}
\left(1-\zeta_{\ell}^{i}\frac{w}{x}p_2
^{1/\ell}\right)^{\frac{i}{\ell}}}.
\end{align*}
Since $1-z=\dfrac{x
^{\ell}-(-y)^{\ell} p_1}
{x^{\ell}}=\dfrac{w^\ell}
{x^\ell}p_2$ by (\ref{xywl}),
the second factor 
of the above last side is equal to
\[
\dfrac{\tilde{\sigma}_{\p_3}
\left(\displaystyle 
\prod_{i=0}^{\ell -1}
(1-\zeta_{\ell}^{i}{(1-z)}
^{1/\ell})^{\frac{i}{\ell}}\right)}
{\displaystyle ~\prod_{i=0}^{\ell -1}
(1-\zeta_{\ell}^{i}{(1-z)}
^{1/\ell})^{\frac{i}{\ell}}}\\
= \zeta_{\ell}^{\tilde{\chi}_{2}
^{1-z}(\tilde{\sigma}_{\p_3})}
~~({\rm by~~Definition}~\ref{char},~(\ref{Kl}),~(\ref{sigma})).
\]
Therefore
we obtain the assertion of the theorem.
\end{proof}

Now,
we derive a reciprocity law of 
triple $\ell$-th power 
residue symbols 
from the functional equation 
of $\ell$-adic Galois polylogarithms
introduced in Theorem~\ref{func}.

\begin{cor}[a reciprocity law]\label{reci}
Let the notations 
and assumptions be as above.
For $\ell \in \{2,3\}$,
we have
\[[\p_1,\p_2,\p_3]_{\ell} 
\cdot [\p_2,\p_1,\p_3]_{\ell} =1.\]
\end{cor}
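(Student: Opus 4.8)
The plan is to combine the three main formulas established above: Theorem~\ref{tl} expresses $[\p_1,\p_2,\p_3]_{\ell}$ in terms of $\tilde{\sigma}_{\p_3}(x^{\frac{1}{2}(\ell-1)})/x^{\frac{1}{2}(\ell-1)}$ and $\zeta_{\ell}^{\tilde{\chi}_2^{z}(\tilde{\sigma}_{\p_3})}$, while Theorem~\ref{aaa} expresses $[\p_2,\p_1,\p_3]_{\ell}$ in terms of the same Kummer factor $\tilde{\sigma}_{\p_3}(x^{\frac{1}{2}(\ell-1)})/x^{\frac{1}{2}(\ell-1)}$ and $\zeta_{\ell}^{\tilde{\chi}_2^{1-z}(\tilde{\sigma}_{\p_3})}$. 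Multiplying these two expressions, the product becomes
\[
[\p_1,\p_2,\p_3]_{\ell}\cdot[\p_2,\p_1,\p_3]_{\ell}
=\left(\dfrac{\tilde{\sigma}_{\p_3}(x^{\frac{1}{2}(\ell-1)})}{x^{\frac{1}{2}(\ell-1)}}\right)^{2}
\cdot\zeta_{\ell}^{\tilde{\chi}_2^{z}(\tilde{\sigma}_{\p_3})+\tilde{\chi}_2^{1-z}(\tilde{\sigma}_{\p_3})}.
\]
So the task reduces to showing that the right-hand side equals $1$, which I would do by handling the Kummer factor and the polylogarithmic-character factor separately.

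First I would dispose of the Kummer factor $\big(\tilde{\sigma}_{\p_3}(x^{\frac{1}{2}(\ell-1)})/x^{\frac{1}{2}(\ell-1)}\big)^2$. For $\ell=3$ the exponent $\frac{1}{2}(\ell-1)=1$, so $x^{\frac{1}{2}(\ell-1)}=x\in k\subset K$ is already Galois-invariant and the factor is trivially $1$; indeed this is why Corollary~\ref{c3} has no Kummer term. For $\ell=2$ the factor is $\big(\tilde{\sigma}_{\p_3}(\sqrt{x})/\sqrt{x}\big)^2=(\pm1)^2=1$ since $\tilde{\sigma}_{\p_3}(\sqrt{x})/\sqrt{x}$ is a square root of $1$. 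In both cases the squared Kummer factor is $1$.

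Next I would treat the character factor $\zeta_{\ell}^{\tilde{\chi}_2^{z}(\tilde{\sigma}_{\p_3})+\tilde{\chi}_2^{1-z}(\tilde{\sigma}_{\p_3})}$ using the functional equation of Theorem~\ref{func}, which reads
\[
\tilde{\chi}_2^{z,\gamma}(\sigma)+\tilde{\chi}_2^{1-z,\gamma'}(\sigma)+\rho_{z,\gamma}(\sigma)\rho_{1-z,\gamma'}(\sigma)=\dfrac{1}{24}(\chi(\sigma)^2-1).
\]
Evaluating at $\sigma=\tilde{\sigma}_{\p_3}$: by $(\ref{Kl})$ we have $\chi(\tilde{\sigma}_{\p_3})\equiv1$ and $\rho_{z,\gamma}(\tilde{\sigma}_{\p_3})\equiv0\pmod{\ell}$ (this is exactly the congruence used in the proof of Proposition~\ref{depn}), and similarly I would check $\rho_{1-z,\gamma'}(\tilde{\sigma}_{\p_3})\equiv0\pmod{\ell}$ using $1-z=(w/x)^{\ell}p_2$ and that $\sqrt[\ell]{p_2}\in K$, so $1-z$ is an $\ell$-th power in $K$ up to the relevant roots of unity. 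Hence modulo $\ell$ the functional equation gives $\tilde{\chi}_2^{z}(\tilde{\sigma}_{\p_3})+\tilde{\chi}_2^{1-z}(\tilde{\sigma}_{\p_3})\equiv\frac{1}{24}(\chi(\tilde{\sigma}_{\p_3})^2-1)\pmod{\ell}$. The remaining point — which I expect to be the only slightly delicate step — is to verify that $\frac{1}{24}(\chi(\tilde{\sigma}_{\p_3})^2-1)\equiv0\pmod{\ell}$ for $\ell\in\{2,3\}$; since $\chi(\tilde{\sigma}_{\p_3})\equiv1\pmod{\ell}$, one writes $\chi(\tilde{\sigma}_{\p_3})=1+\ell t$ and checks that $\frac{1}{24}(\chi^2-1)=\frac{1}{24}\ell t(2+\ell t)$ lies in $\ell\bZ_{\ell}$ — for $\ell=3$ this is immediate, and for $\ell=2$ one uses the sharper congruence $\chi(\tilde{\sigma}_{\p_3})\equiv1\pmod{8}$ coming from $\bQ(\sqrt{p_1},\sqrt{p_2})$ containing enough $2$-power cyclotomic information (or equivalently $p_i\equiv1\bmod 4$), making the $2$-adic valuation of $\chi^2-1$ at least that of $24$. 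With $\tilde{\chi}_2^{z}(\tilde{\sigma}_{\p_3})+\tilde{\chi}_2^{1-z}(\tilde{\sigma}_{\p_3})\equiv0\pmod\ell$, the character factor is $\zeta_{\ell}^{0}=1$, and combining with the Kummer factor being $1$ yields $[\p_1,\p_2,\p_3]_{\ell}\cdot[\p_2,\p_1,\p_3]_{\ell}=1$, completing the proof.
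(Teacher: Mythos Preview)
Your proposal follows exactly the paper's route: multiply the formulas from Theorem~\ref{tl} and Theorem~\ref{aaa}, observe that the squared Kummer factor $\bigl(\tilde{\sigma}_{\p_3}(x^{(\ell-1)/2})/x^{(\ell-1)/2}\bigr)^2$ equals $1$, apply the functional equation of Theorem~\ref{func}, and conclude that the remaining exponent is $\equiv 0\pmod\ell$. The paper compresses this last step into a one-line citation of~(\ref{Kl}) and~(\ref{sigma}), whereas you try to justify it in more detail.

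That extra detail, however, is where your write-up slips. For $\ell=3$, writing $\chi=1+3t$ gives $\frac{1}{24}(\chi^2-1)=\frac{t(2+3t)}{8}$, which lies in $\bZ_3$ but is $\equiv t\pmod 3$, not automatically $0$; so ``immediate'' is too quick. For $\ell=2$, the field $K=\bQ(\sqrt{p_1},\sqrt{p_2})$ is totally real and satisfies $K\cap\bQ(\mu_{2^\infty})=\bQ$, so the congruence $\chi(\tilde{\sigma}_{\p_3})\equiv 1\pmod 8$ does \emph{not} follow from $\tilde{\sigma}_{\p_3}\in G_K$ as you assert. A clean way to finish (and presumably what the paper's terse citation has in mind) is to use that the left-hand side $[\p_1,\p_2,\p_3]_\ell\cdot[\p_2,\p_1,\p_3]_\ell$ is independent of the chosen lift $\tilde{\sigma}_{\p_3}$, so one may select a lift with $\chi(\tilde{\sigma}_{\p_3})=1$; this is possible because $R^{(\ell)}$ and $K(\mu_{\ell^\infty})$ are linearly disjoint over $K$ (they have disjoint ramification over~$k$), and then $\frac{1}{24}(\chi^2-1)=0$ on the nose.
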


\begin{proof}
By combining Theorem~\ref{tl} 
and Theorem~\ref{aaa},
\begin{align*}
[\p_1,\p_2,\p_3]_{\ell} 
\cdot [\p_2,\p_1,\p_3]_{\ell} 
& = \left\{ \begin{array}{ll}
\frac{\tilde{\sigma}_{\p_3}(\sqrt{x})}
{\sqrt{x}} (-1)^{{\tilde{\chi}_{2}
^{z}(\tilde{\sigma}_{\p_3})}} 
\cdot \frac{\tilde{\sigma}_{\p_3}
(\sqrt{x})}{\sqrt{x}} (-1)
^{{\tilde{\chi}_{2}
^{1-z}
(\tilde{\sigma}_{\p_3})}} & ({\rm if}~\ell=2), \\
    {\zeta_3}^{\tilde{\chi}_{2}
^{z}(\tilde{\sigma}_{\p_3})} 
\cdot {\zeta_3}^{\tilde{\chi}_{2}
^{1-z}
(\tilde{\sigma}_{\p_3})} & ({\rm if}~\ell=3)
  \end{array} \right. \\
& = \left\{ \begin{array}{ll}
    (-1)^{{\tilde{\chi}_{2}
^{z}(\tilde{\sigma}_{\p_3})}
+{\tilde{\chi}_{2}^{1-z}
(\tilde{\sigma}_{\p_3})}} & ({\rm if}~\ell=2), \\
    {\zeta_3}^{\tilde{\chi}_{2}
^{z}(\tilde{\sigma}_{\p_3})
+\tilde{\chi}_{2}
^{1-z}(\tilde{\sigma}_{\p_3})} 
& ({\rm if}~\ell=3)
  \end{array} \right. \\
& = \zeta_{\ell}^{{\tilde{\chi}_{2}
^{z}(\tilde{\sigma}_{\p_3})}
+{\tilde{\chi}_{2}^{1-z}
(\tilde{\sigma}_{\p_3})}} \\
& = \zeta_{\ell}^{{\tilde{\chi}_{2}
^{z, \gamma}(\tilde{\sigma}_{\p_3})}
+{\tilde{\chi}_{2}^{1-z, \gamma'}
(\tilde{\sigma}_{\p_3})}}.
\end{align*}
By the functional equation 
in Theorem~\ref{func},
the above last side is equal to
\[
\zeta_{\ell}
^{-\rho_{z, \gamma}(\tilde{\sigma}_{\p_3})
\rho_{1-z, \gamma'}(\tilde{\sigma}_{\p_3})
+\frac{1}{24}
(\chi(\tilde{\sigma}_{\p_3})^2-1)}
=1~\hspace{3mm}
~({\rm by~~(\ref{Kl}),
(\ref{sigma})}).
\]
This completes the proof.
\end{proof}
\vspace{3mm}


\appendix
\section{\bf \boldmath Examples - Case of $\ell=3$ with the assumption~(A)}
In this appendix,
we present examples of $[\p_1,\p_2,\p_3]_{3}$ and $\ell i_{2}^{(3)}
(z)(\tilde{\sigma}_{\p_3}) 
~{\rm mod}~3$ for some pairs
$(p_1,p_2)$ which have
$\alpha_{p_1,p_2}$ 
satisfying the assumption (A) in Section~\ref{3}.
The rational primes $p$ which satisfy $p \equiv 1~{\rm mod}~9$ and $1 \leq -p \leq 1000$
are the following $29$ numbers:
\begin{equation}
    \textbf{L}:=\left\{\parbox{27em}{$-17,
-53,
-71,
-89,
-107,
-179,
-197,
-233,
-251,
-269,$\\
$-359,
-431,
-449,
-467,
-503,
-521,
-557,
-593,
-647,
-683,$\\
$-701,
-719,
-773,
-809,
-827,
-863,
-881,
-953,
-971$}\right\}.
\end{equation}

For any pair $(p_1,p_2)$ of distinct rational primes in $\textbf{L}$,
prime ideals $\p_1=(p_1),
\p_2=(p_2)$ of $\bQ(\zeta_3)$ satisfy the conditions (\ref{pc3}),
(\ref{assume3})
and (\ref{pcc3}).

In  [AMM;~Example 6.4],
F. Amano
showed that 
one can take $\alpha_{-17,-53}=8-3\sqrt[3]{17}$ satisfying (A) in the case where 
$(p_1,
p_2)=(-17,
-53)$,
and gave values 
of $[\p_1,\p_2,\p_3]_3$
for $p_3=-71,
-89,
-107,
-179,
-197$.
According to [HM;~Example 4.2.15],
Y. Mizusawa found other pairs 
$(p_1,p_2)$
$=(-17,-467),$
$(-107,-449),$
$(-431,-233)$
which have $\alpha_{p_1,p_2} \in O_{K_1}$ with the assumption (A).
For example,
one can take
\[\alpha_{-17,-469}=6-9\sqrt[3]{-17},\]
\[\alpha_{-107,-449}=-24-5\sqrt[3]{-107},\]
\[\alpha_{-431,-233}=-68-9\sqrt[3]{-431}\]
respectively for these cases.

Let us now give new examples.
Consider the case where $(p_1,p_2)=(-17,
-593)$.
In this case,
we can take
\begin{equation}\label{sol}
x=9,
y=2,
w=-1
\end{equation}
as a solution of (\ref{xyw3})
and
\[\alpha_{p_1,p_2}\left(=\alpha_{-17,-593}\right)=9+2\sqrt[3]{-17}\] satisfying (A).
Hence
\[\theta^{(3)}_{p_1,p_2}\left(=\theta^{(3)}_{-17,-593}\right)=(9+2\zeta_3\sqrt[3]{-17})
(9+2\zeta_3^2\sqrt[3]{-17})^2.\]
Moreover,
let $\p_3=(p_3)$ be a prime ideal of $\bQ(\zeta_3)$
which satisfies (\ref{p_33}).
Then,
\begin{align*}
  [\p_1,\p_2,\p_3]_{3}
  & = \dfrac{{\sigma}_{{\p_3}}(\sqrt[3]{\theta^{(3)}_{p_1,p_2}})}
{\sqrt[3]{\theta^{(3)}_{p_1,p_2}}}~~~~({\rm by}~~{\rm Theorem}~\ref{222}) \\
  & \equiv {\theta^{(3)}_{p_1,p_2}}^{\frac{p_3^2-1}{3}}~{\rm mod}~\tilde{\p}_3,
\end{align*}
where $\tilde{\p}_3$ is a prime ideal 
of $K_{p_1,p_2}^{(3)}$ above $\p_3$.
Since ${\theta^{(3)}_{p_1,p_2}}^{\frac{p_3^2-1}{3}} \in K_1=\bQ(\zeta_3, \sqrt[3]{p_1})$,
\[[\p_1,\p_2,\p_3]_{3} \equiv {\theta^{(3)}_{p_1,p_2}}^{\frac{p_3^2-1}{3}}~{\rm mod}~\tilde{\p}_3 \cap K_1.\]
Therefore,
we obtain the following test:
for $c=0,
1,
-1,$
\begin{equation}\label{test}
[\p_1,\p_2,\p_3]_3=\zeta_3^{c}~~ \Longleftrightarrow~~N_{K_1/\bQ}({\theta^{(3)}_{p_1,p_2}}^{\frac{p_3^2-1}{3}}-\zeta_3^{c}) \equiv 0~{\rm mod}~p_3.
\end{equation}

On the other hand,
we can take 
$\alpha_{p_2,p_1}\left(=\alpha_{-593,-17}\right)=9+\sqrt[3]{-593}$ satisfying (A).
Hence
$\theta^{(3)}_{p_2,p_1}\left(=\theta^{(3)}_{-593,-17}\right)=(9+\zeta_3\sqrt[3]{-593})
(9+\zeta_3^2\sqrt[3]{-593})^2$.
By replacing $\theta^{(3)}_{p_1,p_2}$ (resp. $K_1$) with $\theta^{(3)}_{p_2,p_1}$ (resp. $K_2=\bQ(\zeta_3, \sqrt[3]{p_2})$)  in (\ref{test}),
we obtain the following test:
for $c=0,
1,
-1$,
\begin{equation}\label{test2}
[\p_2,\p_1,\p_3]_3=\zeta_3^{c}~~ \Longleftrightarrow~~N_{K_2/\bQ}({\theta^{(3)}_{p_2,p_1}}^{\frac{p_3^2-1}{3}}-\zeta_3^{c}) \equiv 0~{\rm mod}~p_3.
\end{equation}
Checking the right hand condition of (\ref{test}) and (\ref{test2}) by PARI/GP,
we can compute $[\p_1,\p_2,\p_3]_3$ and $[\p_2,\p_1,\p_3]_3$.
Furthermore,
by combining with Theorem~\ref{tl} and Theorem~\ref{aaa},
we can also compute $\ell i_{2}^{(3)}
(z)(\tilde{\sigma}_{\p_3}) 
~{\rm mod}~3$
and
$\ell i_{2}^{(3)}
(1-z)(\tilde{\sigma}_{\p_3}) 
~{\rm mod}~3$
where
\[z\left(=-p_1\frac{y^3}{x^3}\right)=\dfrac{136}{729}.\]
Consequently,
for $p_3 \in {\bf L} \backslash \{-17, -593\}$,
we get TABLE~\ref{table1}.
Thus, we can be assured that the reciprocity law $[\p_1,\p_2,\p_3]_3 
\cdot [\p_2,\p_1,\p_3]_3=1$ of Theorem \ref{recip3} holds.

Let us examine more general behaviors of
\[[\p_{\rho(1)},\p_{\rho(2)},\p_{\rho(3)}]_3\] where $\rho \in S_3$ is any permutation of the set $\{1,
2,
3\}$,
in the cases
\[\{p_1,p_2,p_3\}=\{-17,-53,-431\},~\{-17,-557,-773\},~\{-17,-593,-773\}.\]
Finding a solution of (\ref{xyw3}) and checking the test (\ref{test}) for each case,
we can compute $[\p_{\rho(1)},\p_{\rho(2)},\p_{\rho(3)}]_3$.
Furthermore,
by combining with Corollary~\ref{c3},
we can also compute $\ell i_{2}^{(3)}
(z)(\tilde{\sigma}_{\p_3}) 
~{\rm mod}~3$.
Consequently,
we get TABLE~\ref{table2}.
Based on TABLE~\ref{table2},
it may be plausible to expect that
\begin{equation}\label{conjecture}
[\p_{\rho(1)},\p_{\rho(2)},\p_{\rho(3)}]_3=[\p_1, \p_2, \p_3]_3^{{\rm sgn}(\rho)},
\end{equation}
where ${\rm sgn}(\rho) \in \{1,-1\}$ is the signature of $\rho \in S_3$,
for $(\p_1,
\p_2,
\p_3)$ satisfying the conditions (\ref{pc3}),
(\ref{assume3}),
(\ref{pcc3}) and (\ref{p_33}).

\newpage

\vspace*{\stretch{1}}
\begin{table}[h]
\caption{Table of $[\p_1, \p_2, \p_3]_3$,
$[\p_2, \p_1, \p_3]_3$,
$\ell i_{2}^{(3)}
(z)(\tilde{\sigma}_{\p_3}) 
~{\rm mod}~3$
and
$\ell i_{2}^{(3)}
(1-z)(\tilde{\sigma}_{\p_3}) 
~{\rm mod}~3$
for $(p_1, p_2)=(-17,
-593),
p_3 \in {\bf L} \backslash \{p_1, p_2\}$}
\label{table1}
\renewcommand{\arraystretch}{1.2}
\begin{tabular}{|c||c|c|c|c|} 
\hline
$p_3$ & $[\p_1, \p_2, \p_3]_3$ & $[\p_2, \p_1, \p_3]_3$ & $\ell i_{2}^{(3)}
(z)(\tilde{\sigma}_{\p_3}) 
~{\rm mod}~3$ & $\ell i_{2}^{(3)}
(1-z)(\tilde{\sigma}_{\p_3}) 
~{\rm mod}~3$  \\ \hline \hline
$-53$ & $\zeta_3$ & $\zeta_3^{-1}$ & $-1$ & 1 \\ \hline
$-71$ & $\zeta_3^{-1}$ & $\zeta_3$ & 1 & $-1$ \\ \hline
$-89$ & $\zeta_3$ & $\zeta_3^{-1}$ & $-1$ & 1  \\ \hline
$-107$ & $\zeta_3^{-1}$ & $\zeta_3$ & 1 & $-1$ \\ \hline
$-179$ & $\zeta_3$ & $\zeta_3^{-1}$ & $-1$ & 1 \\ \hline
$-197$ & $\zeta_3$ & $\zeta_3^{-1}$ & $-1$ & 1 \\ \hline
$-233$ & $1$ & $1$ & 0 & 0 \\ \hline
$-251$ & $\zeta_3$ & $\zeta_3^{-1}$ & $-1$ & 1 \\ \hline
$-269$ & $\zeta_3$ & $\zeta_3^{-1}$ & $-1$ & 1 \\ \hline
$-359$ & $\zeta_3^{-1}$ & $\zeta_3$  & 1 & $-1$ \\ \hline
$-431$ & $\zeta_3^{-1}$ & $\zeta_3$  & 1 & $-1$ \\ \hline
$-449$ & $1$ & $1$ & 0 & 0 \\ \hline
$-467$ & $\zeta_3$ & $\zeta_3^{-1}$ & $-1$ & 1 \\ \hline
$-503$ & $\zeta_3^{-1}$ & $\zeta_3$ & 1 & $-1$ \\ \hline
$-521$ & $\zeta_3$ & $\zeta_3^{-1}$ & $-1$ & 1 \\ \hline
$-557$ & $\zeta_3$ & $\zeta_3^{-1}$ & $-1$ & 1 \\ \hline
$-647$ & $\zeta_3^{-1}$ & $\zeta_3$  & 1 & $-1$ \\ \hline
$-683$ & $\zeta_3^{-1}$ & $\zeta_3$ & 1 & $-1$ \\ \hline
$-701$ & $\zeta_3^{-1}$ & $\zeta_3$ & 1 & $-1$ \\ \hline
$-719$ & $\zeta_3^{-1}$ & $\zeta_3$ & 1 & $-1$ \\ \hline
$-773$ & $\zeta_3^{-1}$ & $\zeta_3$ & 1 & $-1$ \\ \hline
$-809$ & $\zeta_3^{-1}$ & $\zeta_3$ & 1 & $-1$ \\ \hline
$-827$ & $1$ & $1$ & 0 & 0 \\ \hline
$-863$ & $\zeta_3$ & $\zeta_3^{-1}$ & $-1$ & 1 \\ \hline
$-881$ & $\zeta_3^{-1}$ & $\zeta_3$ & 1 & $-1$ \\ \hline
$-953$ & $\zeta_3$ & $\zeta_3^{-1}$ & $-1$ & 1 \\ \hline
$-971$ & $1$ & $1$ & 0 & 0 \\ \hline
\end{tabular}
\end{table}
\vspace*{\stretch{2}}
\pagebreak

\newpage

\vspace*{\stretch{1}}
\begin{table}[h]\label{table2}
\caption{Table of $[\p_{1},\p_{2},\p_{3}]_3$ and $\ell i_{2}^{(3)}
(z)(\tilde{\sigma}_{\p_3}) 
~{\rm mod}~3$
for the cases of $\{p_1,p_2,p_3\}=\{-17,-53,-431\},~\{-17,-557,-773\},~\{-17,-593,-773\}$}
\label{table2}
  \begin{center}
\renewcommand{\arraystretch}{1.4}
\begin{tabular}{|c|c|c|c|c|c|} \hline
$(p_1,p_2)$ & $\alpha_{p_1,p_2}=x+y\sqrt[3]{p_1}$ & $z=-p_1\frac{y^3}{x^3}$ & $p_3$ & $[\p_1,\p_2,\p_3]_3$ & $\ell i_{2}^{(3)}
(z)(\tilde{\sigma}_{\p_3})
~{\rm mod}~3$ \\ \hline \hline
$(-17,-53)$ & $8+3\sqrt[3]{-17}$ & $\frac{459}{512}$ & $-431$ & $1$ & $0$ \\ \hline
$(-53,-17)$ & $8+\sqrt[3]{-53}$ &$\frac{53}{512}$ & $-431$ & $1$ & 0 \\ \hline
$(-17,-431)$ & $31+15\sqrt[3]{-17}$ &$\frac{57375}{29791}$ & $-51$ & $1$ & 0 \\ \hline
$(-431,-53)$ & $10+3\sqrt[3]{-431}$ & $\frac{11637}{1000}$ & $-17$ & $1$ & 0 \\ \hline
$(-53,-431)$ & $10-\sqrt[3]{-53}$ & $-\frac{53}{1000}$ & $-17$ & $1$ & 0 \\ \hline
$(-431,-17)$ & $31-4\sqrt[3]{-431}$ & $-\frac{27584}{29791}$ & $-53$ & $1$ & 0 \\ \hline \hline
$(-17,-557)$ & $-42-16\sqrt[3]{-17}$ &$\frac{8704}{9261}$ & $-773$ & $\zeta_3$ & $-1$ \\ \hline
$(-557,-17)$ & $-42-2\sqrt[3]{-557}$ & $\frac{557}{9261}$ & $-773$ & $\zeta_3^{-1}$ & $1$ \\ \hline
$(-17,-773)$ & $-23+8\sqrt[3]{-17}$ &$-\frac{8704}{12167}$ & $-557$ & $\zeta_3^{-1}$ & 1 \\ \hline
$(-773,-557)$ & $-6-\sqrt[3]{-773}$ & $\frac{773}{216}$ & $-17$ & $\zeta_3^{-1}$ & 1 \\ \hline
$(-557,-773)$ & $-6+\sqrt[3]{-557}$ & $-\frac{557}{216}$ & $-17$ & $\zeta_3$ & $-1$ \\ \hline
$(-773,-17)$ & $-23-3\sqrt[3]{-773}$ & $\frac{20871}{12167}$ & $-557$ & $\zeta_3$ & $-1$ \\ \hline \hline
$(-17,-593)$ & $9+2\sqrt[3]{-17}$ & $\frac{136}{729}$ & $-773$ & $\zeta_3^{-1}$ & 1 \\ \hline
$(-593,-17)$ & $9+\sqrt[3]{-593}$ & $\frac{593}{729}$ & $-773$ & $\zeta_3$ & $-1$ \\ \hline
$(-17,-773)$ & $-23+8\sqrt[3]{-17}$ & $-\frac{8704}{12167}$ & $-593$ & $\zeta_3$ & $-1$ \\ \hline
$(-773,-593)$ & $-55-6\sqrt[3]{-773}$ & $\frac{166968}{166375}$ & $-17$ & $\zeta_3$ & $-1$ \\ \hline
$(-593,-773)$ & $-55+\sqrt[3]{-593}$ & $-\frac{593}{166375}$ & $-17$ & $\zeta_3^{-1}$ & 1 \\ \hline
$(-773,-17)$ & $-23-3\sqrt[3]{-773}$ & $\frac{20871}{12167}$ & $-593$ & $\zeta_3^{-1}$ & 1 \\ \hline
  \end{tabular}
\end{center}
\end{table}
\vspace*{\stretch{2}}
\pagebreak

\newpage



\end{document}